\newcommand{\Q}{\mathbf{Q}}
\newcommand{\R}{\mathbf{R}}
\newcommand{\pr}{\textbf{P}}
\newcommand{\ex}{\mathbf{E}}
\newcommand{\ind}{\mathbf{1}}
\theoremstyle{plain}
\newtheorem{theorem}{Theorem}
\newtheorem{corollary}{Corollary}
\newtheorem{proposition}{Proposition}
\theoremstyle{definition}
\newtheorem{definition}{Definition}
\newtheorem{remark}{Remark}
\theoremstyle{remark}
\newcommand{\formula}[2][nolabel]
{\ifthenelse{\equal{#1}{nolabel}}
 {\begin{align*} #2 \end{align*}}
 {\ifthenelse{\equal{#1}{}}
  {\begin{align} #2 \end{align}}
  {\begin{align} \label{#1} #2 \end{align}}
 }
}
\numberwithin{equation}{section}
\begin{document}

%
%

\title[Strong  solutions of non-colliding particle systems]{Strong  solutions of non-colliding particle systems}
\thanks{Jacek Ma{\l}ecki was supported by NCN grant no. 2013/11/D/ST1/02622}
\subjclass[2010]{{60J60, 60H15}}
\keywords{{stochastic differential equation, strong solution, non-colliding particle system.}}
\author{Piotr Graczyk, Jacek Ma{\l}ecki}
\address{Piotr Graczyk \\ LAREMA \\ Universit\'e d'Angers \\ 2 Bd Lavoisier \\ 49045 Angers cedex 1, France}
\email{piotr.graczyk@univ-angers.fr}
\address{  Jacek Ma{\l}ecki  \\ Institute of Mathematics and Computer Science \\ Wroc{\l}aw University of Technology \\ ul. Wybrze{\.z}e Wyspia{\'n}\-skiego 27 \\ 50-370 Wroc{\l}aw, Poland}
\email{jacek.malecki@pwr.edu.pl }

\begin{abstract}
We study  systems of stochastic differential equations describing positions $x_1,x_2,\ldots,x_p$
 of $p$ ordered particles, with inter-particles repulsions of the form
$\displaystyle{\frac{H_{ij}(x_i,x_j)}{x_i-x_j}}$. We show the existence of strong and pathwise unique non-colliding solutions of the system with a  colliding initial point
$x_1(0)\leq \ldots\leq x_p(0)$ in the whole generality, under natural assumptions on the coefficients of the equations.
\end{abstract}

\maketitle
\section{Introduction}
Consider the following system of SDEs
\formula[eq:eigenvalues:SDE:general]{
&dx_i = \sigma_i(x_i)dB_i+\left(b_i(x_i)+\sum_{j\neq i}\frac{H_{ij}(x_i,x_j)}{x_i-x_j}\right)dt\/,\quad i=1,\ldots,p\/,\\
& x_1(t)\leq \ldots\leq x_p(t),\ \ \  t\geq 0 \nonumber, 
}
describing positions of $p$  ordered  particles evolving in $\R$. Here  $(B_i)_{i=1,\ldots,p}$ denotes a collection of one-dimensional independent Brownian motions. Throughout the whole paper we assume that the coefficients of the equations are continuous 
and that the functions $H_{ij}$ are non-negative and symmetric in  the  sense (\ref{eq:H:symmetry}).

The SDEs systems   (\ref{eq:eigenvalues:SDE:general})  contain  the following {ones} 
\formula[eq:eigenvalues:SDE]{
  dx_i &= 2g(x_i)h(x_i)dB_i+\beta \left({b}(x_i)+\sum_{j\neq i}\frac{G(x_i,x_j)}{x_i-x_j}\right)dt\/,\quad i=1,\ldots,p\/,
}
where $G(x,y) = g^2(x)h^2(y)+g^2(y)h^2(x)$, $\beta>0$ and $g,h,{b}:\R\to \R$.  Let $S_p$ denote the space of symmetric $p\times p$ real matrices and   $H_p$  the space of Hermitian $p\times p$  
matrices. 
It was shown in \cite{bib:gm13} that for the starting point having no collisions  and for  $\beta=1$, this system describes the eigenvalue processes of the $S_p$-valued process $X_t$ satisfying the following matrix valued stochastic differential equation
\formula{
  dX_t = g(X_t)dW_th(X_t)+h(X_t)dW_t^Tg(X_t)+b(X_t)dt\/,
}
where the functions $g,h,b$ act spectrally on $S_p$ and $W_t$ is a Brownian matrix of dimension $p\times p$.
  When $\beta=2$, the system (\ref{eq:eigenvalues:SDE}) is satisfied by the eigenvalues of the  $H_p$-valued process $Y_t$  which is a solution of
 \formula{
  dY_t = g(Y_t)d\tilde W_t h(X_t)+h(X_t)d\tilde W_t^*g(X_t)+\frac12 b(X_t)dt\/,
}
where $\tilde W_t$ is a complex Brownian matrix of dimension $p\times p$. In the last case, for some special  choices of $g,h$ and $b$, the systems  (\ref{eq:eigenvalues:SDE})
contain the canonical Dyson Brownian Motion $(g=\frac12, h=1, b=0)$ and the eigenvalue processes of the complex Wishart (Laguerre) processes ($g=\sqrt{x}, h=1, b={\rm const}>p-1$).
Recall that  the  Dyson Brownian Motion  is obtained   as $p$ independent Brownian particles conditioned not to collide (see \cite{bib:dyson, grabiner})
and   the Laguerre eigenvalue process  as   $p$ independent  Squared Bessel particles conditioned not to collide (see \cite{bib:konig}).

The general case $\beta\in \R^+$ in (\ref{eq:eigenvalues:SDE}) corresponds to the $\beta$-versions of the processes described by (\ref{eq:eigenvalues:SDE}) with $\beta=1$ and is important in modern statistical physics (see for example \cite{bib:forr}).
On the other hand,  Dyson Brownian Motions are a special case of Brownian particle systems with an interacting potential  (see \cite{bib:RShi}).

Thus the systems    (\ref{eq:eigenvalues:SDE:general}) contain Dyson Brownian Motions, Squared Bessel particle systems, Jacobi particle systems, their $\beta$-versions, non-colliding Brownian and Squared Bessel particles, potential-interacting Brownian particles and other  particle systems crucial in  mathematical physics and physical statistics (see \cite{bib:KatoriSugaku, bib:katori2011}).  Note that the singularities   $\displaystyle{(x_i-x_j)^{-1}}$ make the SDEs system (\ref{eq:eigenvalues:SDE:general})
difficult to solve, especially when the starting point has a collision, i.e. $ x_i(0)= x_j(0)$ for some $i\neq j$. Moreover, the most degenerate case $x_1(0)= \ldots= x_p(0)$ is of great importance in physical applications. 

In this paper we prove the existence of strong and pathwise unique non-colliding solutions of  (\ref{eq:eigenvalues:SDE:general}), with a degenerate colliding initial point 
$x(0)$, in the whole generality, under
natural assumptions on the coefficients of the equations  (\ref{eq:eigenvalues:SDE:general}), formulated and discussed in details in Section \ref{AMR}. The Theorem 1 ensures, that even if starting from the  most degenerate collision state 
$$x_1(0)= \ldots= x_p(0)=0,$$
the particles  $x_i$ will diffract instantly and never more collide. This statement is proved in the strong, trajectorial solution sense.  In this way,
we answer in a very general setting a question raised by Rogers and Shi (\cite[(5i)]{bib:RShi}) in the context of potential-interacting Brownian particles: is the finite particle process well defined by  its SDEs  system?  
As observed by Grabiner in \cite{grabiner}, starting the process from a collision point makes impossible the usual conditioning procedure and the existence of strong solutions  of corresponding SDEs is highly unclear even in the case of processes conditioned not to collide. 

In some particular cases (Dyson Brownian Motions, some  Squared Bessel particle systems) these difficulties have been overcome and the existence of strong solutions of (\ref{eq:eigenvalues:SDE:general}) has been established by C\'epa and L\'epingle in \cite{bib:cepa, bib:cepaEsaim, bib:lep}, using the technique of Multivalued SDEs (MSDEs). 
The   MSDEs theory  was used in
\cite{bib:demni} and \cite{bib:Schapira} in order to show the strong existence of solutions of radial Dunkl and Heckman-Opdam SDEs with more general singularities.

However, the approach via  MSDEs can not be applied to the equations of the general form given in (\ref{eq:eigenvalues:SDE:general}) and consequently, the existence of the strong solutions has been an open question in many important examples. 

Note that some  existence  results were  proved in last years without use of MSDEs (see \cite{bib:Inukai,bib:Angers2006,  bib:Chyb}) but under the hypothesis of starting without collision
(Chybiryakov's proof of   \cite[Prop. 6.8,p.170]{bib:Angers2006}  does not work for a  collision starting point).

Our approach is based on the classical It\^o calculus, applied to elementary symmetric polynomials in $p$ variables  $X=(x_1,\ldots,x_p)$ 
 \formula{
  y_n= e_n(X) = \sum_{i_1<\ldots<i_n}x_{i_1}x_{i_2}\ldots x_{i_n}\/,
 }
as well as to symmetric polynomials of squares of differences between particles
\formula{
  V_n = e_n(A)\/,\quad \textrm{where } A = \{a_{ij}=( x_i- x_j)^2:1\leq i<j\leq p\}\/.
}
 The main advantage of the semimartingales $y_n$ and $V_n$  is that singularities disappear in their stochastic descriptions. Moreover, the processes $V_n$ control the collisions between particles.

In the next Section \ref{AMR} we present and explain technical  assumptions of the main result of the paper, Theorem 1,  formulated at
the end of Section \ref{AMR}. In Sections  \ref{SymPol} and \ref{SymPolDif} we develop the stochastic analysis of symmetric polynomial  stochastic processes $y_n$
and  $V_n$. These sections  provide the main ingredients of the proof of Theorem \ref{thm:main}. In Section \ref{WeakExist} we show that the system (\ref{eq:eigenvalues:SDE:general}) 
has a weak continuous solution.  Next, we prove the pathwise uniqueness of solutions of the system (\ref{eq:eigenvalues:SDE:general}) 
and we conclude with a proof of Theorem \ref{thm:main}.
The last Section \ref{sec:EandA} contains applications to important classes of particle systems.

\section{Assumptions and Main Result}\label{AMR}
As it was mentioned in the Introduction, our general assumptions on the coefficients of the equations are
\begin{itemize}
   \item {\it the functions $\sigma_i, b_i, H_{ij}$ are continuous for every $i,j=1,\ldots,p$ and $i\neq j$;}
   \item {\it the functions $H_{ij}$ are non-negative and the following symmetry condition holds}
	\formula[eq:H:symmetry]{
	   H_{ij}(x,y)=H_{ji}(y,x) \/,\quad x,y\in \R\/.
	}
\end{itemize}
Note that $H_{ij}(x_i,x_j)/(x_i-x_j)$ describes the repulsive force with which the $j$-th particle located in  $x_j$ acts on the $i$-th particle located in  $x_i$. The symmetry assumptions on $H_{ij}$ mean that if $j>i$, i.e. $x_j>x_i$, then  the upper particle  $x_j$ pushes the lower particle $x_i$ down with the same force as the lower one pushes the upper one up.

Next three conditions are   adaptations of standard regularity assumptions occurring in the theory of SDEs (without singularities), which usually guarantee uniqueness and non-explosion of solutions. Note that we remain in the context of one-dimensional Yamada-Watanabe theorem, where the coefficients in the martingale part are allowed to be $1/2$-H\"older continuous and the drift part coefficients are Lipschitz continuous.

\begin{itemize}
\item[(C1)]{\it There exists a function $\rho:\R^+\to\R^+$ such that $\int_{0^+}\rho^{-1}(x)dx=\infty$
and that  
\formula{
  |\sigma_i(x)-\sigma_i(y)|^{2}\leq \rho(|x-y|)\/,\quad x,y\in \R\/, \ i=1,\ldots,p. }
Moreover, the  functions $b_i$ are Lipschitz continuous or non-increasing}
\item[(C2)] {\it There exists $c> 0$ such that}
		\formula{
		   \sigma_i^2(x)+b_i(x)x\leq c(1+|x|^2)\/,\quad x\in \R\/,\\
			 H_{ij}(x,y)\leq c(1+|xy|)\/,\quad x,y\in \R\/.
		}
		\end{itemize}
		The conditions (C2) are standard conditions on the growth of the coefficients of SDE which give finiteness of the solutions for every $t> 0$, however the sublinear growth of $b_i$ can be replaced by non-positivity of $b_i(x)x$ for large $x$.

 The last group of   conditions (A1)-(A5) relates mainly to the singular part of the equations.
Condition (A1) will be crucial for the proof of the pathwise uniqueness of solutions in Section
\ref{WeakExist}. Conditions  (A2)-(A5) are introduced in order  to ensure non-collisions of the particles, which is a crucial property of a solution of (\ref{eq:eigenvalues:SDE:general}) to show the existence of its strong solution.
\begin{itemize}
   \item[(A1)] {\it For every $i\neq j$ and $w<x<y<z$ we have}
	\formula[eq:A1]{
	   H_{ij}(w,z)(y-x)\leq H_{ij}(x,y)(z-w)\/.
	}
	\end{itemize}
	Since $H_{ij}(x_i,x_j)/(x_i-x_j)$ describes the force with which the particles  $x_j$ and  $x_i$ repel each other, the condition (\ref{eq:A1}) means that  the force decreases as the particles move away from each other in such a way that the first particle goes down and the other goes up.
	\begin{itemize}
	 \item[(A2)] {\it There exists $c\geq 0$ such that for every $i\neq j$ we have}
	   \formula[eq:A2]{
	       \sigma_i^2(x)+\sigma_j^2(y)\leq c(x-y)^2+4H_{ij}(x,y)\/,\quad x,y\in \R\/,
				}
\end{itemize}
This assumption ensures that the repulsive forces between the particles are sufficiently large relatively to the martingale part to prevent collisions of the particles caused by $\sigma_i(x_i)dB_i$ and $\sigma_j(x_j)dB_j$. Moreover, this assumption is optimal in many important examples such as $\beta$ versions of Dyson's Brownian motion model (see Section \ref{sec:EandA} for more details), i.e. for $\sigma= 1$, $b= 0$ and $H_{ij}(x,y) = \beta/2$ the condition (A2) holds if and only if $\beta\geq 1$, which is a necessary and sufficient condition for the system to have no collisions.
\begin{itemize}
\item[(A3)] {\it There exists $c\geq 0$ such that for every $x<y<z$ and $i<j<k$ }
	   \formula{
	   H_{ij}(x,y)(y-x) +   H_{jk}(y,z)(z-y)  \leq c(z-y)(z-x)(y-x) + H_{ik}(x,z)(z-x)\/.
				}	
\end{itemize}
This condition is used to ensure that the repulsive forces between particles do not cause collisions. Since $H$ is non-negative, two particles are pushed off from each other, but if we add another particle, which is above the previous ones, then two additional forces appear which push the first two particles down. The condition (A3) implies that the additional forces do not cause a collision between two original particles.   

It can be easily seen that the conditions (A2) and (A3) do not ensure that the particles become immediately distinct if we start from a collision point (consider the example of generalized squared Bessel particles with integer order $\alpha\in \{0,1,\ldots,p-2\}$ starting from zero). Thus, if $x_i(0)=x_j(0)=x$ for some $i<j$, we will distinguish two situations. When $\sigma_i^2(x)+\sigma_j^2(x)>0$ or $H_{ij}(x,x)>0$, then the process $X$ will instantly leave the initial  collision point thanks to the martingale part or the repulsive forces, respectively (see Proposition  \ref{prop:collisiontime:glued}). We call such a phenomenon a "diffraction" of particles. Consequently,  in the study of the particle process we must pay special attention to  starting from a collision in an element of the sets  
\formula{
 G_{kl}=\bigcap_{k<i<j<l}\{x: \sigma_i^2(x)+\sigma_j^2(x) +H_{ij}(x,x)=0\},\quad 1\leq k<l\leq p.
}
 We will call elements of the sets $G_{kl}$ "degenerate points", and if 
$$x_k(0)=x_l(0)=x\in G_{kl},$$ we will say
that the particle process is  starting from a "multiple degenerate" point.
The next condition    (\ref{eq:Degeneration})
guarantees that 
there is a force coming from the whole drift at $x$  such that at most one particle can stay at  the point $x$, so the multiple degeneracy will disappear
(Proposition \ref{prop:Degenerated}).
\begin{itemize}
   \item[(A4)]  {\it  The sets $G_{kl}$ consist of isolated points and for every $x\in G_{kl}$ we have }
	\formula[eq:Degeneration]{
	   \sum_{i=k}^l\left( b_i(x)+\sum_{j=1}^{p-2}\frac{H_{ij}(x,y_j)}{x-y_j}\ind_{\R\setminus \{x\}}(y_j)\right)\neq 0,
		}
	{\it for every $y_1,\ldots,y_{p-2}\in \R $.}
\end{itemize}	
 We use the convention that multiplying by the indicator $\ind_{\R\setminus \left\{x\right\}}(y_j)=0$  always gives $0$, i.e. the whole  $j$-th term of the second sum in 
(\ref{eq:Degeneration})
disappears when $y_j=x$.
 
Finally, we consider the following monotonicity property  of the drift coefficients $b_i$ in (\ref{eq:eigenvalues:SDE:general})
\begin{itemize}
 \item[(A5)] If $i<j$ then $b_i(x)\leq b_j(x)$ for all $x\in \R$.
\end{itemize}
This condition comes up naturally because if $b_i(x)> b_j(x)$ then the particle $x_i$ could catch up with the particle $x_j$ thanks to the bigger drift force.

In the case, when the coefficients of the equations do not depend on $i$ and $j$, i.e. $\sigma_i(x)=\sigma(x)$, $b_i(x)=b(x)$ and $H_{ij}(x,y)=H(x,y)$, simple sufficient conditions for (A1)-(A5)  are discussed in more detail in Section \ref{sec:EandA}. 

\remark If we know that the particle system $(x_1,\ldots,x_p)$ lives on some subset  $I=[a,b]\subset\R$, then we can restrict all the conditions to $x,y\in I$.

Before formulating Theorem \ref{thm:main}, the main result of the paper, recall that, accordingly to \cite[IX(1.2)]{bib:ry99}, a pair $(X,B)$ is a solution of 
the system  (\ref{eq:eigenvalues:SDE:general}) if all the integrals appearing in its integral form are meaningful. In particular, 
 the integrals of the  drift parts of (\ref{eq:eigenvalues:SDE:general}) will be understood, if needed, as improper Riemann integrals.

\begin{theorem}
\label{thm:main}
Consider the system  (\ref{eq:eigenvalues:SDE:general})  with  an initial condition
\formula{
x_1(0)\leq \ldots\leq x_p(0).
}
If the conditions (C1), (C2) and (A1)-(A5) hold, then there exists a unique strong non-exploding solution $[X(t)]_{t\geq 0}$ of (\ref{eq:eigenvalues:SDE:general}) such that the first collision time
\formula{
T = \inf\{t>0: x_i(t)=x_j(t)\textrm{ for some $i\neq j$, $i,j=1,\ldots,p$}\}
}
is infinite almost surely.
\end{theorem}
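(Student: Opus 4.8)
The overall strategy is the classical one for singular stochastic differential equations: prove weak existence of a continuous solution, prove pathwise uniqueness, combine the two by the Yamada--Watanabe theorem to obtain a unique strong solution, and then show separately that this solution never collides and never explodes. What makes each step delicate is the singular drift $H_{ij}(x_i,x_j)/(x_i-x_j)$, and the device that circumvents it is the passage to the symmetric polynomial semimartingales $y_n=e_n(X)$ and $V_n=e_n(A)$ from Sections \ref{SymPol} and \ref{SymPolDif}: when It\^o's formula is applied to $y_n$, the combinations $\partial_{x_i}e_n-\partial_{x_j}e_n$ carry a factor $(x_i-x_j)$ that cancels the offending denominator, so the $y_n$ (and likewise the $V_n$) solve stochastic equations with no singularities at all, while the $V_n$ additionally encode the distances between the particles.

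\textbf{Weak existence.} I would obtain a weak solution by a compactness argument. Regularize the singular drift to make it locally Lipschitz --- for instance replace $1/(x_i-x_j)$ by $(x_i-x_j)/((x_i-x_j)^2+\eps)$, adding if necessary a normal reflection at the walls of the ordered chamber to preserve the labelling --- and let $X^\eps$ be the corresponding solution. It\^o's formula for the singularity-free processes $y_n^\eps=e_n(X^\eps)$ together with (C2) produces moment bounds $\sup_\eps\ex|x_i^\eps(t)|^{2k}<\infty$ that are locally uniform in $t$; by Kolmogorov's criterion the laws of $(y_1^\eps,\dots,y_p^\eps)$ are tight, and since the sorted roots of a monic real-rooted polynomial depend continuously on its coefficients, so are the laws of $X^\eps$. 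Passing to a subsequential limit $X^\eps\Rightarrow X$, the $V_n$-estimates of Section \ref{SymPolDif} --- which are exactly where (A2)--(A5) enter --- bound the rate at which particles can approach one another and show that the limiting process spends zero Lebesgue time on the collision set; hence the drift integrals in (\ref{eq:eigenvalues:SDE:general}) converge as improper Riemann integrals and $X$, on a suitably enlarged probability space, is a weak solution.

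\textbf{Pathwise uniqueness and strong existence.} Given two solutions $X,\tilde X$ carried by the same Brownian motions with the same initial condition, I would apply the one-dimensional Yamada--Watanabe comparison to $\sum_{i=1}^p|x_i(t)-\tilde x_i(t)|$, via the usual smooth approximations built from the function $\rho$ of (C1). The diagonal, $1/2$-H\"older martingale part is absorbed exactly by (C1); the regular drift is handled by the Lipschitz-or-non-increasing hypothesis on $b_i$ together with the ordering condition (A5); and the singular drift is where (A1) is used. Since both tuples are ordered it is natural to pair $x_i$ with $\tilde x_i$; symmetrizing the double sum over unordered pairs $\{i,j\}$ by means of (\ref{eq:H:symmetry}), the contribution of a pair reduces to the difference $H_{ij}(x_i,x_j)/(x_i-x_j)-H_{ij}(\tilde x_i,\tilde x_j)/(\tilde x_i-\tilde x_j)$ multiplied by $\sign(x_i-\tilde x_i)-\sign(x_j-\tilde x_j)$, and (\ref{eq:A1}), applied to the increasing rearrangement of the four reals $x_i,x_j,\tilde x_i,\tilde x_j$, is precisely the inequality making each such pair contribution non-positive. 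Gronwall's lemma then forces $X=\tilde X$. Weak existence plus pathwise uniqueness give, by the Yamada--Watanabe theorem, the unique strong solution, and non-explosion follows from the moment bounds already obtained from (C2).

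\textbf{Non-collisions, and the main obstacle.} The remaining and hardest point is $T=\infty$ almost surely, equivalently $V_{\binom p2}(t)=\prod_{i<j}(x_i(t)-x_j(t))^2>0$ for all $t>0$. I would first treat the "diffraction" at a collision point $x$ with $\sigma_i^2(x)+\sigma_j^2(x)>0$ or $H_{ij}(x,x)>0$, where the non-degenerate martingale part, respectively the strictly positive repulsion, drives the two particles apart instantly (Proposition \ref{prop:collisiontime:glued}); the "multiple degenerate" starting configurations, which live in the sets $G_{kl}$, are dealt with by (A4) through Proposition \ref{prop:Degenerated}, the isolatedness of $G_{kl}$ and the non-vanishing total drift (\ref{eq:Degeneration}) preventing more than one particle from sitting on $x$. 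Once the particles are pairwise distinct I would run a stochastic comparison: the semimartingale decomposition of $(x_i-x_j)^2$, or of suitable negative powers of the $V_n$, is singularity-free, its diffusion coefficient near a collision is at most $\sigma_i^2+\sigma_j^2$, which by (A2) is at most $4H_{ij}$ plus a smooth term, so each gap is squeezed from below by a squared Bessel process of dimension at least $2$ and never reaches $0$; (A3) guarantees that the two extra downward forces coming from a higher third particle do not break this bound, and (A5) prevents a lower particle from overtaking a higher one through a larger drift. Patching together the diffraction at degenerate and non-degenerate collision points with this Bessel-type non-attainability between collisions yields $\pr(T<\infty)=0$ and completes the proof. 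The main obstacle is exactly this last step: organizing the bookkeeping over which and how many particles may meet simultaneously, and checking that (A2)--(A5) are sharp enough to close every case, is where essentially all of the work lies.
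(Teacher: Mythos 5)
Your skeleton (existence, pathwise uniqueness, Yamada--Watanabe, plus a separate non-collision analysis through the processes $V_n$) matches the paper's, and you locate the roles of (C1), (C2), (A1), (A4), (A5) correctly. But in the two hardest steps you take a different route from the paper, and in both places there is a genuine gap.

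For existence, the paper does \emph{not} regularize the singular drift. It solves the singularity-free polynomial system (\ref{eq:en:SDEs})--(\ref{bracket}) directly (continuous coefficients, so \cite{bib:IW} applies), defines $\Lambda=f(y)$ by inverting the symmetric-polynomial map, proves \emph{first} that $\Lambda$ never collides, and only then recovers the original SDE by applying It\^o's formula on $[s,t]$ with $s>0$ and letting $s\to 0$ (Theorem \ref{th:existence}); the drift integral is identified as an improper integral because the martingale term and the left-hand side both converge as $s\to 0$. Your regularization scheme has to pass to the limit in $\int_0^t H_{ij}(x_i^\eps,x_j^\eps)(x_i^\eps-x_j^\eps)/((x_i^\eps-x_j^\eps)^2+\eps)\,du$, and ``the limit spends zero Lebesgue time on the collision set'' is not sufficient for that: you need uniform-in-$\eps$ integrability of the singular drift near collisions, which is precisely the quantitative estimate you do not supply. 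Worse, the $V_n$-estimates you invoke are derived in the paper for solutions of (\ref{eq:en:SDEs}), not for the reflected, regularized approximations $X^\eps$, so using them here without redoing the computations uniformly in $\eps$ is circular.

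For non-collision after diffraction, the paper does not run a gap-by-gap comparison with squared Bessel processes. It applies McKean's argument to the single global functional $U=-\tfrac12\ln V_N$ (Proposition \ref{prop:collisiontime:distinct}): the drift of $U$ splits into a two-index sum controlled by (A2) and (A5)/monotonicity of $b_i$, and a three-index sum over $i<j<k$ controlled exactly by (A3), giving $\mathrm{drift}[U]_t\le ct$ and hence no finite-time explosion of $U$. Your per-pair Bessel bound is plausible for $p=2$, but for $p\ge 3$ the downward push from a third particle enters the equation for $(x_i-x_j)^2$ with a sign you cannot discard, and (A3) is an inequality between the three pairwise repulsions taken together; it does not translate into a lower bound for a single gap in isolation. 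You correctly flag this bookkeeping as the main obstacle, but the proposal does not close it, whereas the $\ln V_N$ computation is designed to close it in one stroke. A final minor point: (A5) is used in the non-collision drift estimate, not in the pathwise uniqueness step where you place it; uniqueness needs only (C1), (A1), the symmetry (\ref{eq:H:symmetry}), and the Lipschitz-or-non-increasing hypothesis on $b_i$.
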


\remark If we drop the condition (C2) in Theorem \ref{thm:main}, then there exists a unique strong solution, possibly admitting explosions, such that the first collision time $T$ is not shorter than the lifetime of the solution. 

\remark It is enough to assume (A2)-(A5) to show that there exists a solution of (\ref{eq:eigenvalues:SDE:general}) having no collisions after the start. The additional conditions (C1) and (A1) ensure the pathwise uniqueness of the solutions and consequently the existence of a unique strong solution.


\section{Stochastic description of the basic symmetric polynomials}\label{SymPol}
We denote the elementary symmetric polynomials in $p$ variables  $X=(x_1,\ldots,x_p)$ and of  degree $n=1,2,\ldots,p$ by
 \formula{
   e_n(X) = \sum_{i_1<\ldots<i_n}x_{i_1}x_{i_2}\ldots x_{i_n}\/.
 }
We use the following notational conventions: $e_0(X)= 1$ and $e_{-1}(X)=0$. We will also consider incomplete polynomials. For any fixed collection $x_{j_1}, x_{j_2},\ldots,x_{j_k}$ of entries of $X$
 \formula{
   e_n^{\overline{x}_{j_1}, \overline{x}_{j_2},\ldots,\overline{x}_{j_k}}(X) = \sum_{\stackrel{i_1<\ldots<i_n}{i_k\neq j_l}}x_{i_1}x_{i_2}\ldots x_{i_n}\/,
 }
i.e. it is the sum of all products of length $n$ which do not contain any of the specified variables $x_{j_1}, x_{j_2},\ldots,x_{j_k}$.
There is obviously no one-to-one correspondence between $(x_1,\ldots,x_p)$ and $(e_1,\ldots e_p)$ since changing the order of the arguments does not affect the values of their symmetric polynomials. But if we  restrict the arguments  to the open set 
\formula{
  C_+ = \{(x_1,\ldots,x_p)\in\R^p: x_1<x_2<\ldots<x_p\}\/
} 
then the smooth function
\formula{
  e = (e_1,\ldots,e_p): C_+\to \R^p
}
is one-to-one. This follows from the fact that $(-1)^k e_k(X)$ is the coefficient of $x^{p-k}$ in the polynomial $P(x)=\prod_{i=1}^p (x-x_i)$.
Thus $e$ is a diffeomorphism between $ C_+$ and $ e(C_+)$, which is an open subset of $\R^p$.
Let us denote by
 $$f=(f_1,\ldots,f_p): e(C_+)\to C_+$$
 the inverse diffeomorphism. By the continuity of ordered roots of a polynomial as functions of its coefficients
(see for example \cite{bib:Lojasiewicz}), $f$  extends to  a continuous function
\formula{
   f:\overline{e(C_+)} \stackrel{1-1}{\longrightarrow} \overline{C_+}.
}

\subsection{Symmetric polynomials of particles}
In the following proposition we determine the SDEs system for the symmetric polynomials in $(x_1,\ldots,x_p)$  verifying the system (\ref{eq:eigenvalues:SDE:general}) whenever there are no collisions between particles. 

In the proof, as well as in some other proofs in this paper, we use the property
\formula{
\sum_{i=1}^p a_i \sum_{j\not=i} b_{ij}=\sum_{i<j}(a_i  b_{ij}+a_j  b_{ji}).
} 
\begin{proposition}\label{polynomials}
Let $X=(x_1,\ldots,x_p)$ be a solution of (\ref{eq:eigenvalues:SDE:general}) such that $x_1(0)<\ldots<x_p(0)$. Then the symmetric polynomials 
$e_n(X)$, $n=1,\ldots, p$, are continuous semimartingales described until the first collision time of $(x_i)_{i=1,\ldots,p}$  by the system of SDEs
\begin{eqnarray}
  de_n(X) &=& \left(\sum_{i=1}^p\sigma_i^2(x_i)(e_{n-1}^{\overline{x}_i}(X))^2\right)^{1/2}dU_n \nonumber\\
	&&+\left(\sum_{i=1}^pb_i(x_i)e_{n-1}^{\overline{x}_i}(X)-\sum_{i<j}H_{ij}(x_i,x_j)e_{n-2}^{\overline{x}_i,\overline{x}_j}(X)\right)dt\/,\quad
	n=1,\ldots,p, \label{eq:en:SDE1}
 \end{eqnarray}
 where $\{U_n;n=1\ldots,p\}$ is a family of one-dimensional Brownian motions such that 
 \formula[eq:en:SDE2]{
   d\left<e_n(X),e_m(X)\right> = \sum_{i=1}^p \sigma_i^2(x_i)e_{n-1}^{\overline{x}_i}(X)e_{m-1}^{\overline{x}_i}(X)dt\/.
 }
 \end{proposition}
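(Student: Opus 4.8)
The plan is to apply the multidimensional It\^o formula to the smooth functions $e_n$ on the open set $C_+$ where the particles are strictly ordered, and then to recognize the resulting martingale part as a single Brownian motion times the appropriate coefficient. First I would note that $e_n(X)$ is a polynomial, hence $C^\infty$, so on the stochastic interval $[0,T)$ (before the first collision) the process $X$ stays in $C_+$ and It\^o's formula applies without any boundary issues. The key algebraic identities are the partial derivative formulas
\begin{align*}
\frac{\partial e_n}{\partial x_i}(X) = e_{n-1}^{\overline{x}_i}(X), \qquad
\frac{\partial^2 e_n}{\partial x_i^2}(X) = 0, \qquad
\frac{\partial^2 e_n}{\partial x_i \partial x_j}(X) = e_{n-2}^{\overline{x}_i,\overline{x}_j}(X) \quad (i\neq j),
\end{align*}
which follow directly from the definition of the elementary symmetric polynomials (each $x_i$ appears linearly, so second derivatives in a single variable vanish). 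Plugging the SDE (\ref{eq:eigenvalues:SDE:general}) into It\^o's formula then gives
\begin{align*}
de_n(X) = \sum_{i=1}^p e_{n-1}^{\overline{x}_i}(X)\,\sigma_i(x_i)\,dB_i
+ \sum_{i=1}^p e_{n-1}^{\overline{x}_i}(X)\left(b_i(x_i)+\sum_{j\neq i}\frac{H_{ij}(x_i,x_j)}{x_i-x_j}\right)dt
+ \sum_{i<j} e_{n-2}^{\overline{x}_i,\overline{x}_j}(X)\,\sigma_i(x_i)\sigma_j(x_j)\,d\langle B_i,B_j\rangle_t.
\end{align*}

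Next I would simplify the drift. The last It\^o-correction sum vanishes because the $B_i$ are independent, so $d\langle B_i,B_j\rangle_t = 0$ for $i\neq j$. For the singular part of the drift, I use the displayed reindexing identity $\sum_{i=1}^p a_i\sum_{j\neq i} b_{ij} = \sum_{i<j}(a_i b_{ij}+a_j b_{ji})$ with $a_i = e_{n-1}^{\overline{x}_i}(X)$ and $b_{ij} = H_{ij}(x_i,x_j)/(x_i-x_j)$. This turns the double sum into
\begin{align*}
\sum_{i<j}\frac{1}{x_i-x_j}\left(H_{ij}(x_i,x_j)\,e_{n-1}^{\overline{x}_i}(X) - H_{ij}(x_i,x_j)\,e_{n-1}^{\overline{x}_j}(X)\right),
\end{align*}
where I used the symmetry $H_{ji}(x_j,x_i)=H_{ij}(x_i,x_j)$ from (\ref{eq:H:symmetry}). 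The crucial cancellation is the polynomial identity
\begin{align*}
e_{n-1}^{\overline{x}_i}(X) - e_{n-1}^{\overline{x}_j}(X) = (x_j - x_i)\,e_{n-2}^{\overline{x}_i,\overline{x}_j}(X),
\end{align*}
which one checks by splitting $e_{n-1}^{\overline{x}_i}$ into terms containing $x_j$ and terms not containing $x_j$ (and symmetrically), the common part cancelling and the remaining two blocks combining via $x_j e_{n-2}^{\overline{x}_i,\overline{x}_j} - x_i e_{n-2}^{\overline{x}_i,\overline{x}_j}$. Substituting this, the factor $x_i - x_j$ in the denominator cancels, leaving exactly $-\sum_{i<j} H_{ij}(x_i,x_j)\,e_{n-2}^{\overline{x}_i,\overline{x}_j}(X)$, which is the claimed drift; the singularities have indeed disappeared.

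Finally I would handle the martingale part. The continuous local martingale $M^{(n)}_t := \sum_{i=1}^p \int_0^t e_{n-1}^{\overline{x}_i}(X)\,\sigma_i(x_i)\,dB_i$ has quadratic covariation $d\langle M^{(n)},M^{(m)}\rangle_t = \sum_{i=1}^p \sigma_i^2(x_i)\,e_{n-1}^{\overline{x}_i}(X)\,e_{m-1}^{\overline{x}_i}(X)\,dt$, again using independence of the $B_i$; this is (\ref{eq:en:SDE2}). In particular $d\langle M^{(n)}\rangle_t = \big(\sum_i \sigma_i^2(x_i)(e_{n-1}^{\overline{x}_i}(X))^2\big)dt$, so by the standard representation of a continuous local martingale as a time-changed/coefficient-scaled Brownian motion (L\'evy's characterization applied after introducing an independent Brownian motion to cover the set where the coefficient vanishes, cf. \cite{bib:ry99}), there is a Brownian motion $U_n$ with $dM^{(n)}_t = \big(\sum_i \sigma_i^2(x_i)(e_{n-1}^{\overline{x}_i}(X))^2\big)^{1/2}dU_n$, giving (\ref{eq:en:SDE1}). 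The main point requiring care is the purely algebraic identity $e_{n-1}^{\overline{x}_i} - e_{n-1}^{\overline{x}_j} = (x_j-x_i)e_{n-2}^{\overline{x}_i,\overline{x}_j}$, since everything else is a routine application of It\^o's formula together with independence of the driving Brownian motions; a secondary (but standard) subtlety is the construction of the single Brownian motion $U_n$ when the diffusion coefficient is allowed to vanish.
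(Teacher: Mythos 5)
Your proposal is correct and follows essentially the same route as the paper: first-order It\^o expansion $de_n(X)=\sum_i e_{n-1}^{\overline{x}_i}(X)\,dx_i$ (the second-order terms vanish by multilinearity of $e_n$ and independence of the $B_i$), the reindexing identity $\sum_i a_i\sum_{j\neq i}b_{ij}=\sum_{i<j}(a_ib_{ij}+a_jb_{ji})$ combined with the symmetry of $H_{ij}$ and the cancellation $e_{n-1}^{\overline{x}_i}-e_{n-1}^{\overline{x}_j}=(x_j-x_i)e_{n-2}^{\overline{x}_i,\overline{x}_j}$ to remove the singularity, and the standard representation of the resulting local martingale via a single Brownian motion $U_n$. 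You are merely more explicit than the paper about the vanishing It\^o correction and the construction of $U_n$ where the diffusion coefficient vanishes.
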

 \begin{proof}
 By an application of It\^o formula and the fact that $e_n(X)=x_i e_{n-1}^{\overline{x}_i}(X)+e_n^{\overline{x}_i}(X)$
for any $i$ we get that for every $t$ smaller than the first collision time of $(x_i)_{i=1,\ldots,p}$
 \formula{
   de_n(X) &= \sum_{i=1}^p e_{n-1}^{\overline{x}_i}(X)dx_i\\
   &=\sum_{i=1}^p \sigma_i(x_i)e_{n-1}^{\overline{x}_i}(X)dB_i+\left(\sum_{i=1}^p b_i(x_i)e_{n-1}^{\overline{x}_i}(X)+\sum_{i=1}^p\sum_{j\neq i} e_{n-1}^{\overline{x}_i}(X)\frac{H_{ij}(x_i,x_j)}{x_i-x_j}\right)dt.
 }
 Thus there exist Brownian motions $U_n$, $n=1,\ldots,p$, such that
 \formula{
    \sum_{i=1}^p \sigma_i(x_i)e_{n-1}^{\overline{x}_i}(X)dB_i = \left(\sum_{i=1}^p\sigma_i^2({x}_i)(e_{n-1}^{\overline{x}_i}(X))^2\right)^{1/2}dU_n
 }
 and (\ref{eq:en:SDE2}) holds. Moreover, by the  symmetry property (C2) of $H_{ij}(x,y)$ and the fact that for any $j\not=i$ 
 \formula{
 e_{n-1}^{\overline{x}_i}(X) = x_j e_{n-2}^{\overline{x}_i,\overline{x}_j}(X)+e_{n-1}^{\overline{x}_i,\overline{x}_j}(X)\/,
 }
  we obtain
 \formula{
    \sum_{i=1}^p\sum_{j\neq i} e_{n-1}^{\overline{x}_i}(X)\frac{H_{ij}(x_i,x_j)}{x_i-x_j} &= \sum_{i<j}\left(e_{n-1}^{\overline{x}_i}(X)\frac{H_{ij}(x_i,x_j)}{x_i-x_j}+e_{n-1}^{\overline{x}_j}(X)\frac{H_{ji}(x_j,x_i)}{x_j-x_i}\right)\\
     &= -\sum_{i<j}e_{n-2}^{\overline{x}_i,\overline{x}_j}H_{ij}(x_i,x_j)\/.
 }
 This ends the proof.
 \end{proof}
Note the following remarkable property of the stochastic differential equations describing the polynomial processes $e_n(X)$, $n=1,\ldots,p$: the singularities $(x_i-x_j)^{-1}$ appearing in (\ref{eq:eigenvalues:SDE:general}) are no longer present in (\ref{eq:en:SDE1}).

Now, using the map  $f:\overline{e(C_+)} \rightarrow \overline{C_+}$  we get rid of $x_i$'s in the system (\ref{eq:en:SDE1}). We will shorten the notation $e_n^{\overline{x}_i}$ to
 $e_n^{\overline{i}}$. We denote by $y$ elements of $\overline{e(C_+)}$.

\begin{proposition}
Define the following functions on $\overline{e(C_+)}$:
\formula{
  a_n(y) &= \left(\sum_{i=1}^p\sigma_i^2(f_i(y))(e_{n-1}^{\overline{i}}(f(y)))^2\right)^{1/2}\/,\ \ y\in \overline{C_+},\\
	q_n(y) &= \sum_{i=1}^p b_i(f_i(y))e_{n-1}^{\overline{i}}(f(y))-\sum_{i<j}e_{n-2}^{\overline{i},\overline{j}}(f(y))H_{ij}(f_i(y),f_j(y))\/,\\
	s_{n,m}(y) &= \sum_{i=1}^p \sigma_i^2(f_i(y))e_{n-1}^{\overline{i}}(f(y))e_{m-1}^{\overline{j}}(f(y)).
}

(i) The functions $a_n, q_n$ and $s_{n,m}$, $n,m=1,\ldots,p$, are continuous on $\overline{e(C_+)}$.

(ii) Let $X=(x_1,\ldots,x_p)$ be a solution of (\ref{eq:eigenvalues:SDE:general}) on ${C_+}$. The  symmetric polynomial processes $y_n(t)=e_n(X(t))$, $t\geq 0, n=1,\ldots,p$,  satisfy the system of SDEs
\begin{eqnarray}
   dy_n = a_n(y_1,\ldots,y_p)dU_n +q_n(y_1,\ldots,y_p)dt\/,\quad n=1,\ldots,p\/, \label{eq:en:SDEs}
	\end{eqnarray}
where $\{U_n;n=1\ldots,p\}$ is a family of one-dimensional Brownian motions satisfying
\formula[bracket]{	
	 \left<a_n dU_n,a_m dU_m\right> = s_{n,m}dt\/,\quad n\neq m\/,\quad n,m=1,\ldots,p\/.
}
(iii)  Let $y_0\in \overline{e(C_+)}$. The   system (\ref{eq:en:SDEs})-(\ref{bracket}) with the initial condition
$y(0)=y_0$ has a solution, possibly admitting explosions.
\end{proposition}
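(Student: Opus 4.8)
The plan is to treat (i) and (ii) as routine and concentrate on (iii). For (i) I would just check continuity directly: the map $f=(f_1,\dots,f_p)$ is continuous on $\overline{e(C_+)}$ (this is the extension of the inverse diffeomorphism recalled above, which rests on continuity of the ordered roots of a monic polynomial in its coefficients), and each incomplete elementary symmetric polynomial $e_{n-1}^{\overline i},e_{n-2}^{\overline i,\overline j}$ is an honest polynomial, hence continuous; since $\sigma_i,b_i,H_{ij}$ are continuous by the standing assumptions, $q_n$ and $s_{n,m}$ are continuous as finite sums of products of continuous functions — note that the singular factors $1/(x_i-x_j)$ have disappeared from $q_n$, so there is nothing to check there — and $a_n=\bigl(\sum_{i}\sigma_i^2(f_i(\cdot))\,(e_{n-1}^{\overline i}(f(\cdot)))^2\bigr)^{1/2}$ is continuous because its radicand is a nonnegative continuous function and $t\mapsto\sqrt t$ is continuous on $[0,\infty)$. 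For (ii) I would simply substitute: if $X$ solves (\ref{eq:eigenvalues:SDE:general}) and stays in $C_+$, then $x_1(0)<\dots<x_p(0)$ and the first collision time is infinite, so Proposition~\ref{polynomials} gives that $y_n=e_n(X)$ satisfies (\ref{eq:en:SDE1})–(\ref{eq:en:SDE2}) for all $t\ge 0$; since $f\circ e=\mathrm{id}$ on $C_+$ one has $x_i(t)=f_i(y(t))$, and replacing each $x_i$ by $f_i(y)$ turns (\ref{eq:en:SDE1}) into (\ref{eq:en:SDEs}) with the coefficients $a_n,q_n$ just defined and (\ref{eq:en:SDE2}) into $d\langle y_n,y_m\rangle=s_{n,m}(y)\,dt$, which for $n\neq m$ is exactly (\ref{bracket}).

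Part (iii) carries the content. First I would recast the martingale part in square-root form: on $\overline{e(C_+)}$ put $\Sigma_{n,i}(y)=\sigma_i(f_i(y))\,e_{n-1}^{\overline i}(f(y))$, so that $\Sigma\Sigma^{T}=(s_{n,m})$ and $a_n^2=\sum_i\Sigma_{n,i}^2$; a weak solution of $dy_n=\sum_{i=1}^p\Sigma_{n,i}(y)\,dB_i+q_n(y)\,dt$ driven by a standard $p$-dimensional Brownian motion $B$ yields a solution of (\ref{eq:en:SDEs})–(\ref{bracket}) on setting $a_n\,dU_n:=\sum_i\Sigma_{n,i}(y)\,dB_i$ and building the Brownian motions $U_n$ exactly as in the proof of Proposition~\ref{polynomials}. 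Working with $\Sigma$ rather than with $a_n$ and $s_{n,m}$ separately is the convenient choice, because it keeps the diffusion matrix nonnegative definite after extension. By part (i) the functions $\Sigma_{n,i}$ and $q_n$ are continuous on the closed set $\overline{e(C_+)}$; I would extend them (Tietze) to continuous functions on all of $\R^p$ agreeing with the originals on $\overline{e(C_+)}$, and then invoke the classical Skorokhod existence theorem for stochastic differential equations with continuous coefficients (equivalently, solve the martingale problem on $\overline{e(C_+)}$ for $L=\tfrac12\sum_{n,m}s_{n,m}\partial_n\partial_m+\sum_n q_n\partial_n$): this yields a weak solution started at $y_0$, defined up to the explosion time $\zeta=\lim_m\inf\{t:|y(t)|\ge m\}$. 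No growth hypothesis has been imposed, which is precisely why explosions must be admitted in the statement.

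The one point that is more than a citation, and the step I expect to be the real obstacle, is that the extended coefficients agree with the genuine $a_n,q_n$ only on $\overline{e(C_+)}$, so the process produced above is a bona fide solution of (\ref{eq:en:SDEs})–(\ref{bracket}) only as long as it stays in $\overline{e(C_+)}$, i.e.\ as long as the monic polynomial $x^p-y_1x^{p-1}+y_2x^{p-2}-\dots+(-1)^p y_p$ remains real-rooted. I would not attempt to settle this invariance within the present proposition but would record it for the collision analysis, where the semimartingales $V_n=e_n(A)$ built from the squared gaps $(x_i-x_j)^2$ are exactly the instrument that keeps the configuration real and ordered, hence $y(t)$ inside $\overline{e(C_+)}$. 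One could alternatively try to manufacture a solution that lies in $\overline{e(C_+)}$ by construction, approximating $y_0$ by non-colliding points $e(x_0^{(k)})$, solving (\ref{eq:eigenvalues:SDE:general}) up to the first collision by localizing to compact subsets of the open set $C_+$ (where the coefficients are continuous and bounded), mapping over by $e$, and extracting a locally-in-time weak limit using continuity of the coefficients; but for a colliding $y_0$ the approximating first-collision times may tend to $0$, so this route does not obviously bypass the invariance issue either, and I would keep the extend-and-apply-Skorokhod argument as the proof of (iii) proper.
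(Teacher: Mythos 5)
Your proposal is correct and follows essentially the same route as the paper: parts (i) and (ii) are handled identically, and for (iii) the paper simply cites the Ikeda--Watanabe existence theorem for SDEs with continuous coefficients, which is the same classical result you invoke via the factorization $\Sigma\Sigma^{T}=(s_{n,m})$ together with a continuous extension of the coefficients off $\overline{e(C_+)}$. Your explicit treatment of the extension step and of the invariance of $\overline{e(C_+)}$ (deferred, as in the paper, to the analysis of the processes $V_n$) is more careful than the paper's one-line citation, but it does not constitute a different argument.
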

\begin{proof}
The part (i) follows by 
 continuity of the map  $f$  on $\overline{e(C_+)}$
and by continuity of the functions
 $\sigma_i$, $b_i$ and $H_{ij}$. Part (ii) is a corollary of Proposition \ref{polynomials}. By \cite[Th.2.3, p.159]{bib:IW},  the  system (\ref{eq:en:SDEs})
with the condition (\ref{bracket})
has a solution, possibly admitting explosions, for every $y(0)\in \overline{e(C_+)}$. 
\end{proof}

\begin{definition}\label{fY}
Let  $(y_1,\ldots,y_p)$ be a solution of (\ref{eq:en:SDEs})-(\ref{bracket}) with $y(0)\in \overline{e(C_+)}$.
We define stochastic process $\Lambda=(\lambda_1,\ldots,\lambda_p)$, where
\formula{
\lambda_i=f_i(y_1,\ldots,y_p),\ \  i=1,\ldots, p.
}
\end{definition}
Thus, from now on, whenever we write $\lambda_i(t)$ we mean the process $\lambda_i(t)=f_i(y(t))$, 
defined from a solution $y(t)$  of (\ref{eq:en:SDEs})-(\ref{bracket}), using the inverse symmetric polynomial map $f$. Obviously we have
$$
y_n=e_n(\lambda_1,\ldots,\lambda_p)=e_n(\Lambda),\ \  n=1,\ldots, p
$$
and whenever $y(0)\in e(C_+)$, i.e. $\lambda_i(0)\neq \lambda_j(0)$ for every $i\neq j$, then $\Lambda=(\lambda_1,\ldots,\lambda_p)$ is a solution of (\ref{eq:eigenvalues:SDE:general}) up to the first collision time. It is thus natural to interpret  $\Lambda=(\lambda_1,\ldots,\lambda_p)$  as a system of particles related to the solution $y$ of  (\ref{eq:en:SDEs})-(\ref{bracket}).
\subsection{Non-explosion of solutions}
Now we show that the condition (C2) is sufficient in order that the solutions $y_1,\ldots,y_p$ of  (\ref{eq:en:SDEs})-(\ref{bracket}) do not explode in a finite time. 
\begin{proposition}
  If (C2) holds, then the explosion time of any solution of (\ref{eq:en:SDEs}) is infinite almost surely.
\end{proposition}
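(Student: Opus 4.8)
The plan is to run a Khasminskii-type non-explosion argument with a Lyapunov function built from the power sum of the particles. Since the coefficients $a_n,q_n,s_{n,m}$ of (\ref{eq:en:SDEs})--(\ref{bracket}) are continuous on $\overline{e(C_+)}$, a solution stays in $\overline{e(C_+)}$ up to a possible explosion, and here ``explosion'' means $|y(t)|\to\infty$ in finite time. It therefore suffices to exhibit a $C^2$ function $V\colon\overline{e(C_+)}\to[0,\infty)$ whose sublevel sets $\{y: V(y)\le N\}$ are bounded and which satisfies $LV\le K(1+V)$ on $\overline{e(C_+)}$ for some constant $K$; the conclusion then follows from a routine stopping-time argument (Khasminskii's non-explosion test, cf.\ \cite{bib:IW}).

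I would take the polynomial
\formula{
 V(y)=\frac12\,y_1^2-y_2 ,
}
which is smooth in $y$. Writing $\lambda_i=f_i(y)$ as in Definition \ref{fY} and using the Newton identity $\sum_{i=1}^p\lambda_i^2=e_1(\Lambda)^2-2e_2(\Lambda)$, we get $V(y)=\frac12\sum_{i=1}^p\lambda_i^2\ge0$ on $\overline{e(C_+)}$. Moreover $y_n=e_n(\Lambda)$ yields $|y_n|\le\binom pn(\max_i|\lambda_i|)^n\le\binom pn(2V(y))^{n/2}$, so each sublevel set $\{V\le N\}$ is bounded; thus $V$ is a proper nonnegative Lyapunov function on the relevant domain, although it is not coercive on all of $\R^p$.

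Next I would apply It\^o's formula to $V(y(t))$ along (\ref{eq:en:SDEs}). Only $dy_1$, $dy_2$ and $d\langle y_1\rangle=a_1^2\,dt$ contribute, so the drift of $V(y(t))$ equals $y_1q_1(y)-q_2(y)+\frac12 a_1(y)^2$. Since $e_0\equiv1$ and $e_{-1}\equiv0$, one has $q_1(y)=\sum_{i=1}^p b_i(\lambda_i)$ and $a_1(y)^2=\sum_{i=1}^p\sigma_i^2(\lambda_i)$, while $e_1^{\overline i}(\Lambda)=y_1-\lambda_i$ gives $q_2(y)=\sum_{i=1}^p b_i(\lambda_i)(y_1-\lambda_i)-\sum_{i<j}H_{ij}(\lambda_i,\lambda_j)$; recall from Proposition \ref{polynomials} that the inter-particle terms enter the $q_n$ without singularities. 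Hence the drift of $V(y(t))$ reduces to
\formula{
 \sum_{i=1}^p\left(\lambda_i b_i(\lambda_i)+\frac12\sigma_i^2(\lambda_i)\right)+\sum_{i<j}H_{ij}(\lambda_i,\lambda_j),
}
which is a pointwise identity on $\overline{e(C_+)}$, so the possibility that $y(0)$ is a collision point plays no role. Now (C2) gives $\lambda_i b_i(\lambda_i)+\frac12\sigma_i^2(\lambda_i)\le\lambda_i b_i(\lambda_i)+\sigma_i^2(\lambda_i)\le c(1+\lambda_i^2)$ and $H_{ij}(\lambda_i,\lambda_j)\le c(1+|\lambda_i\lambda_j|)\le c\left(1+\frac12\lambda_i^2+\frac12\lambda_j^2\right)$; summing and using $\sum_{i=1}^p\lambda_i^2=2V(y)$ we obtain $LV(y)\le K(1+V(y))$ on $\overline{e(C_+)}$ with $K=K(c,p)$.

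It remains to run the standard stopping-time argument. Let $\tau_\infty$ be the explosion time and, for $N\ge1$, set $\tau_N=\inf\{t: V(y(t))\ge N\}\wedge N$; since $V$ is proper on $\overline{e(C_+)}$, one has $\tau_N\uparrow\tau_\infty$. It\^o's formula together with optional stopping gives $\ex V(y(t\wedge\tau_N))\le V(y(0))+K\int_0^t\left(1+\ex V(y(s\wedge\tau_N))\right)\,ds$, so by Gronwall's lemma $\ex V(y(t\wedge\tau_N))\le(V(y(0))+Kt)\,e^{Kt}$, a bound independent of $N$. On $\{\tau_N\le t\}$ with $N>t$, continuity yields $V(y(\tau_N))=N$, so $N\,\pr(\tau_N\le t)\le\ex V(y(t\wedge\tau_N))\le(V(y(0))+Kt)\,e^{Kt}$; letting $N\to\infty$ gives $\pr(\tau_\infty\le t)=0$ for every $t>0$, i.e.\ $\tau_\infty=\infty$ almost surely. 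The main (in fact only) point requiring care is the properness of $V$ on $\overline{e(C_+)}$ rather than on $\R^p$: it is exactly what reduces non-explosion of (\ref{eq:en:SDEs}) to the single estimate $LV\le K(1+V)$, and it rests on controlling $|y|$ by $\max_i|\lambda_i|$, i.e.\ on the real-rootedness structure of $\overline{e(C_+)}$.
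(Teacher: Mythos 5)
Your proof is correct and follows essentially the same route as the paper: the authors use the Lyapunov function $R_t=\sum_i\lambda_i^2=y_1^2-2y_2$ (twice your $V$), compute the same drift $\sum_i(\sigma_i^2(\lambda_i)+2\lambda_i b_i(\lambda_i))+2\sum_{i<j}H_{ij}(\lambda_i,\lambda_j)$, and conclude by the same (C2)--Gronwall--stopping-time argument. Your added remark on properness of $V$ on $\overline{e(C_+)}$ corresponds to the paper's closing observation that finiteness of $R_t$ bounds each $\lambda_i^2$ and hence each $y_n$.
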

\begin{proof}
   Let $y=(y_1,\ldots,y_p)$ be a solution of (\ref{eq:en:SDEs}). We define 
	\formula{
	   R_t=\sum_{i=1}^p \lambda_i^2  =y_1^2-2y_2. 
	 }
 Applying It\^o formula to (\ref{eq:en:SDEs}) we 
get
	\formula{
	   dR_{t} = 2y_1a_1dU_1-2a_2 dU_2
		+\left(\sum_{i=1}^p(\sigma_i^2(\lambda_i)+2\lambda_ib_i(\lambda_i))+2\sum_{i<j}H_{ij}(\lambda_i,\lambda_j)\right)dt\/.
	}
	Using (\ref{bracket}) one sees easily that $\left<y_1a_1dU_1-a_2 dU_2, y_1a_1dU_1-a_2 dU_2\right>= 
	\sum_{i=1}^p\sigma_i^2(\lambda_i)\lambda_i^2$.  
	It follows that there exists a Brownian motion $W_t$ such that 
		$$dR_{t} =2 
	\left(\sum_{i=1}^p\sigma_i^2(\lambda_i)\lambda_i^2\right)^{1/2}dW_t
	+\left(\sum_{i=1}^p(\sigma_i^2(\lambda_i)+2\lambda_ib_i(\lambda_i))+2\sum_{i<j}H_{ij}(\lambda_i,\lambda_j)\right)dt\/.
	$$
	The rest of the proof is similar to the proof of the classical theorem on non-explosion of solutions of a SDE, see
	Theorem 2.4 in \cite{bib:IW}. For the convenience of the reader we provide the proof.
	
	Set $\tau_n = \inf\{t>0: R_t\geq n\}$.
	Using  the fact that the expectation of the martingale part vanishes and (C2)  we get
	\formula{
	  \ex R_{t\wedge \tau_n} &= R_0+\ex \int_0^{t\wedge \tau_n}\left(\sum_{i=1}^p(\sigma_i^2(\lambda_i)+2\lambda_ib_i(\lambda_i))+2\sum_{i<j}H_{ij}(\lambda_i,\lambda_j)\right)ds\\
		&\leq R_0+ c\int_0^{t} (1+\ex R_{s\wedge \tau_n})ds
	}
	Continuity of the paths and the Lebesgue dominated convergence theorem imply that the function $t\to \ex R_{t\wedge \tau_n}$ is continuous. By the integral version of the Gronwall's lemma 
	\formula{
	  \ex R_{t\wedge \tau_n} \leq (1+R_0)e^{ct}-1\/,\quad t\geq 0
	}
 and taking $n\to \infty$ we obtain that $R_t$ is finite almost surely for every $t\geq 0$. Thus, all the processes 
$\lambda_i^2(t)\leq R_t$ are finite for every $t$ and consequently every $y_n(t)=e_n(\Lambda(t))$ is finite  almost surely. It implies that the explosion time of the solution  $y(t)$ is infinite almost surely.
\end{proof}
\subsection{ Instant exit from  a multiple degenerate point}
Starting from this section, we study  the behavior of the particles $\lambda_i$ associated to a solution $y$ of (\ref{eq:en:SDEs}).

In the next proposition we use the condition (A4) to show that if there are at least two particles $\lambda_k, \lambda_l$ starting from the same degenerate point, i.e. a point $x$ belonging to a set $G_{kl}$,
 then immediately all  particles, except perhaps one, are pushed off that point, i.e. there might be at most one particle which stays at $x$. Observe that Proposition \ref{prop:Degenerated} does not imply the instant diffraction of the particles, i.e. it could possibly happen that $\lambda_k(t)=\lambda_l(t)$ on some time interval $(0,\epsilon)$ with positive probability. This problem together with the non-degenerate case, i.e. starting from a collision in $x\notin G_{kl}$, will be considered later in Proposition \ref{prop:collisiontime:glued}.

\begin{proposition}
\label{prop:Degenerated}
   Let $y=(y_1,\ldots,y_p)$ be a solution of (\ref{eq:en:SDEs}) and assume that (A4) holds.
	If $\lambda_k(0)=\lambda_l(0)=x\in G_{kl}$ for some $k< l$  then 
	\formula{
	   \tau=\inf\{t>0:\sum_{i=1}^p \ind_{\{\lambda_i(t)=x\}}\leq 1\} = 0 \quad\textrm{a.s.}
	}
\end{proposition}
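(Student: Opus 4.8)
The plan is to show that the process cannot keep two or more particles sitting at the degenerate point $x\in G_{kl}$ for a positive length of time, by deriving a contradiction with the drift condition (A4). Write $m$ for the number of particles located at $x$ at time $t$; on the event $\{\tau>0\}$ we have $m\ge 2$ on an initial interval. The idea is to localize: on a small random time interval the particles that are \emph{not} at $x$ stay uniformly away from $x$ (since $x$ is an isolated point of $G_{kl}$ and the paths are continuous), so the only possible accumulation at $x$ comes from the $m$ particles sitting there. First I would introduce a test function that detects the cluster at $x$ — a natural choice is $\Phi(\Lambda)=\prod_{i=k}^{l}(\lambda_i-x)^2$ or, better, the sum $\Psi_t=\sum_{i=k}^l (\lambda_i(t)-x)$ (using the ordering, the terms with $i$ small are $\le 0$ and with $i$ large are $\ge 0$, so one must be a little careful) — and compute its It\^o decomposition from the SDE \eqref{eq:eigenvalues:SDE:general}, valid away from collisions; the singular terms $H_{ij}/(\lambda_i-\lambda_j)$ for $i,j$ both in the cluster are handled exactly as in Proposition \ref{polynomials} via the symmetry (\ref{eq:H:symmetry}), so that only finite quantities survive.

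The key computation is to look at the drift of $\Psi_t=\sum_{i=k}^l(\lambda_i(t)-x)$ (or of the quantity $\sum_{i=k}^l\lambda_i$, which is a sum of the $y_n$'s via Newton's identities, hence a genuine semimartingale with no singularities by the previous propositions). Its martingale part has quadratic variation $\int_0^t\sum_{i=k}^l\sigma_i^2(\lambda_i)\,ds$, which vanishes on the set of times where all the clustered particles are at $x$, because $x\in G_{kl}$ forces $\sigma_i^2(x)=0$ for $k<i<l$ — here I must also absorb the two boundary indices $i=k$ and $i=l$; for those one uses that $\lambda_k,\lambda_l$ are at $x$ by hypothesis and monotonicity traps the remaining $\sigma$'s, or one works with $\sum_{k<i<l}$ plus a separate argument for the endpoints via the ordering $\lambda_k\le\lambda_{k+1}\le\cdots\le\lambda_l$. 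The drift of $\Psi_t$ is
\formula{
  \sum_{i=k}^l\Bigl(b_i(\lambda_i)+\sum_{j\neq i}\frac{H_{ij}(\lambda_i,\lambda_j)}{\lambda_i-\lambda_j}\Bigr)dt,
}
and the sum over pairs $i,j$ \emph{both} in $\{k,\ldots,l\}$ cancels by antisymmetry, leaving exactly $\sum_{i=k}^l\bigl(b_i(\lambda_i)+\sum_{j\notin\{k,\dots,l\}}\frac{H_{ij}(\lambda_i,\lambda_j)}{\lambda_i-\lambda_j}\bigr)dt$. When all clustered particles sit at $x$, this equals $\sum_{i=k}^l\bigl(b_i(x)+\sum_{j} \frac{H_{ij}(x,y_j)}{x-y_j}\ind_{\R\setminus\{x\}}(y_j)\bigr)$ with $y_j=\lambda_j$ the positions of the outside particles — which is precisely the left-hand side of (\ref{eq:Degeneration}), hence nonzero, hence bounded away from $0$ in absolute value on the localizing interval by continuity and the isolation of $x$ in $G_{kl}$.

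From here the contradiction is standard. On the event $\{\tau>\delta\}$ of positive probability, $\Psi$ has zero martingale part and a drift that is a.e. bounded away from $0$ with a fixed sign on $[0,\tau]$ (say $\ge c_0>0$ after shrinking), so $\Psi_{t}-\Psi_0\ge c_0 t>0$ for $t\in(0,\tau]$; but $\Psi_0=0$ and, by continuity, $\Psi_t\to 0$ as $t\to 0^+$, and more importantly the clustered particles were supposed to stay at $x$, forcing $\Psi_t=0$ — contradiction. To make "zero martingale part" and "drift bounded below on the set where particles are at $x$" interact correctly I would run the argument on the occupation set $\{t:\ \lambda_i(t)=x\ \text{for all }i\in\{k,\dots,l\}\}$: its Lebesgue measure must be $0$ because on it $d\Psi_t$ is a nonzero drift while $\Psi_t$ is constant there, and a continuous semimartingale cannot have nonzero drift on a set of times where it is constant (its occupation density / the fact that $\int \ind_{\{\lambda_i=x\}}d\langle\lambda_i\rangle$ and $\int\ind_{\{\lambda_i=x\}}(\text{drift})\,ds$ must both vanish). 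The main obstacle, and the place I expect to spend the most care, is the treatment of the two boundary particles $\lambda_k$ and $\lambda_l$: the set $G_{kl}$ only controls $\sigma_i^2+\sigma_j^2+H_{ij}$ for $k<i<j<l$, so one has to argue that as long as $\lambda_k=\lambda_l=x$ the squeezed particles $\lambda_k,\dots,\lambda_l$ are all equal to $x$ and invoke $G_{kl}$ only for the strictly interior indices, while (\ref{eq:Degeneration}) is stated with the full sum $\sum_{i=k}^l$ precisely to accommodate the endpoints — so the matching of the "$\sigma=0$" part with the "$b$ and outside-$H$" part has to be done on exactly the right index set, and that bookkeeping is the delicate point.
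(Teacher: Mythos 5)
Your central object, $\Psi_t=\sum_{i=k}^l(\lambda_i(t)-x)$, is where the argument breaks down. This is \emph{not} a symmetric function of all $p$ particles, so it is not a polynomial in $y_1,\ldots,y_p$: Newton's identities express only the full power sums $\sum_{i=1}^p\lambda_i^m$ through the $e_n$'s, not partial sums over a sub-block of indices. Consequently you have no a priori semimartingale decomposition for $\Psi$ on the collision set, and the alternative route you offer --- computing its drift ``from the SDE (\ref{eq:eigenvalues:SDE:general}), valid away from collisions'' --- is circular: on the time interval where the cluster sits at $x$ the particles $\lambda_k,\ldots,\lambda_l$ \emph{are} in collision, and it is precisely not known there that they satisfy (\ref{eq:eigenvalues:SDE:general}) (that is what the whole paper is establishing); the pairwise cancellation of the intra-cluster terms $H_{ij}(\lambda_i,\lambda_j)/(\lambda_i-\lambda_j)$ ``by antisymmetry'' is a $0/0$ manipulation at the collision. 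The idea could in principle be repaired by showing that, while the cluster stays separated from the remaining particles, $\sum_{i\in S}\lambda_i$ is an analytic function of $(y_1,\ldots,y_p)$ (e.g.\ via a contour integral of $zP'(z)/P(z)$ around $x$) and then extending the drift coefficient by continuity from the non-collision set; but that ingredient is absent from your proposal. The paper sidesteps all of this by applying It\^o only to a genuine symmetric polynomial, namely $y_n=e_n(\Lambda)$ with $n=p-\gamma+1$ ($\gamma$ = cluster size, $x$ translated to $0$): this process vanishes identically while the cluster stays at $0$, so its drift is zero, while formula (\ref{eq:en:SDE1}) shows the drift equals $e_{n-1}^{\overline{\lambda}_k}(\Lambda)$ times exactly the sum in (\ref{eq:Degeneration}); since (A4) makes that sum nonzero, $e_{n-1}^{\overline{\lambda}_k}(\Lambda)$ --- a product of the nonzero particles --- would have to vanish, a contradiction.

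Two further points. First, your contradiction (correctly carried out) would only show that the \emph{entire} cluster cannot remain at $x$, i.e.\ that at least one particle leaves instantly; the proposition asserts that all but at most one leave instantly, and the paper gets from the former to the latter by an induction on the cluster size combined with the Markov property and continuity of paths --- this step is missing from your plan. Second, the effort you devote to killing the martingale part of $\Psi$ via $\sigma_i^2(x)=0$ and to the boundary indices $k,l$ is unnecessary: once a continuous semimartingale is identically constant on an interval, both its local-martingale and finite-variation parts vanish there automatically, so only the drift identity matters.
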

\begin{proof}
Without loss of generality we can assume that $\lambda_{k-1}(0)<\lambda_k(0)$ and $\lambda_l(0)<\lambda_{l+1} (0)$, i.e. there are exactly $l-k+1$ particles starting from $x$, and we denote by $S=\{k,k+1,\ldots,l-1,l\}$ the set of indices of these particles. Moreover, we can and do assume that $x=0$. Additionally, we will denote by $\gamma(t)=\sum_{i=1}^p \ind_{\{\lambda_i(t)=x\}}$ the number of particles staying at time $t$ in $x$. Let $\gamma = \gamma(0) = l-k+1$ and note that we have $\gamma\geq 2$. Suppose by contradiction  that the first time, when at least one of the particles moves from $0$, i.e. the random variable
	\formula{
	  \tau_0= \inf\{t>0: \lambda_k(t)\neq 0 \vee \ldots \vee \lambda_{k+\gamma-1}(t)\neq 0\}
	}
	is greater than zero with positive probability. Using continuity of the paths we can see that there exists $\tilde{\tau}_0$ (positive with positive probability) such that 
	\formula{
	\lambda_1(t)\leq \lambda_{k-1}(t)<\lambda_k(t)=\ldots=\lambda_{l}(t)=0<\lambda_{l+1}(t)\leq \ldots\leq \lambda_p(t)\/,\quad t<\tilde{\tau}_0\/,
	}
	i.e. we have $\gamma$ particles remaining equal to zero on $[0,\tilde{\tau}_0)$ and $p-\gamma$ nonzero particles on this time interval. Since each product forming part of the sum defining $e_n(\Lambda)$, where $n=p-\gamma+1$ contains at least one of the zero particle, we have $e_{n}(\Lambda)(t)=0$ for $t\in [0,\tilde{\tau}_0)$. In particular its drift part is zero on $[0,\tilde{\tau}_0)$.
	
	From the other side, observe that if $i\notin S$ then $e_{n-1}^{\overline{\lambda}_i}(\Lambda) = 0$ and $e_{n-2}^{\overline{\lambda}_i,\overline{\lambda}_j}(\Lambda)=0$ if in addition $j\notin S$. Since $H(0,0)=0$ we can write the drift part of $y_n$ on $[0,\tilde{\tau}_0)$ as
	\formula{
	  \textrm{drift}[y_n] =  \sum_{i\in S}\left(b_i(\lambda_i)e_{n-1}^{\overline{\lambda}_i}(\Lambda)-\sum_{j\neq i,\/j\notin S}e_{n-2}^{\overline{\lambda}_i,\overline{\lambda}_j}(\Lambda)H_{ij}(\lambda_i,\lambda_j)\right)\/.
	}
	Seeing that for $j\notin S$ we have $e_{n-1}^{\overline{\lambda}_i,\overline{\lambda}_{j}}(\Lambda) = 0$, we can write 
	\formula{
	  e_{n-1}^{\overline{\lambda}_i}(\Lambda) = \lambda_j e_{n-2}^{\overline{\lambda}_i,\overline{\lambda}_j}+e_{n-1}^{\overline{\lambda}_i,\overline{\lambda}_{j}}(\Lambda) = \lambda_j e_{n-2}^{\overline{\lambda}_i,\overline{\lambda}_j}\/,\textrm{ whenever } i\in S, j\notin S\/. 
	}
	Note also that for every $i\neq k$, $i,k\in S$ we have $e_{n-1}^{\overline{\lambda}_i}(\Lambda) = e_{n-1}^{\overline{\lambda}_k}(\Lambda)$ and this common quantity is equal to the product of $p-\gamma$ non zero particles. Consequently, we obtain that for $t<\tilde{\tau}_0$ the drift part of $y_n=e_n(\Lambda)$ is from one side identically zero, but from the other side it is equal to 
	\formula{
	   \textrm{drift}[y_n] &= \sum_{i\in S} e_{n-1}^{\overline{\lambda}_i}(\Lambda)\left(b_i(0)+\sum_{j\neq i,j\notin S}
		\frac{H_{ij}(0,\lambda_j)}{-\lambda_j}\right)\\
		&= e_{n-1}^{\overline{\lambda}_k}(\Lambda)\sum_{i\in S}\left(b_i(0)+\sum_{j\neq i}\frac{H_{ij}(0,\lambda_j)}{-\lambda_j}\ind_{\{\lambda_j\neq 0\}}\right)\/,\quad k\in S\/.
	}
Since the assumption (A4) ensures that  the above-given sum over $S$ is nonzero, it implies that for every $i\in S$ we have $e_{n-1}^{\overline{\lambda}_i}(\Lambda) =0$ on $(0,\tilde{\tau}_0)$, which is a contradiction with the fact that $e_{n-1}^{\overline{\lambda}_i}(\Lambda)$ is equal to the product of particles which are nonzero on $[0,\tilde{\tau}_0)$.

Now we reason by induction on $\gamma=\gamma(0)\geq 2$ in order to prove the statement of the Proposition.  Note that if $\gamma=2$, we have $\tau=\tau_0$.	
The above-given arguments show that in this situation, for every $t>0$  we have $\pr_{2}(\tau>t)=0$.

Suppose $\gamma=\gamma(0)> 2$. Using continuity of the paths and the Markov property we obtain  
\formula{
   \pr_{\gamma}(\tau>t) &\leq \sum_{q\in [0,t)\cap\Q}\pr_{\gamma}(\gamma(q)\leq\gamma-1,\tau>t)\\
	&= \sum_{q\in [0,t)\cap\Q}\ex_{\gamma}[\pr_{\gamma(q)}(\tau>t-q), \gamma(q)\leq\gamma-1]=0\/
}
by induction hypothesis. Consequently, for all $t>0$ one has  $\pr_{\gamma}(\tau>t)=0$ for every $\gamma = \gamma(0)\geq 2$. The Proposition follows.
\end{proof}

\section{Polynomials of squares of differences between particles and collision times}\label{SymPolDif}
\subsection{Symmetric polynomials of squares of differences between particles}
 For any $1\leq i,j\leq p$ we put $a_{ij} = (\lambda_i-\lambda_j)^2$ and define the family of the processes 
\formula{
  V_n = e_n(A)\/,\quad \textrm{where } A = \{a_{ij}=(\lambda_i-\lambda_j)^2:1\leq i<j\leq p\}\/,
}
where $n=1,2,\ldots,N=p(p-1)/2$. The process $V_n$ is a sum of all products of the length $n$ of the squared differences between particles. In particular
\formula{
  V_{N}(t) = \prod_{i<j}(\lambda_i(t)-\lambda_j(t))^2
}
is the squared Vandermonde determinant and 
\formula{
  V_{1}(t) = \sum_{i<j}(\lambda_i(t)-\lambda_j(t))^2\/.
}
Note that these processes and their zeros control the collisions between particles. For example, $V_{N}(t)$ is zero if and only if any collision occurs at time $t$, $V_1(t)$ is zero if and only if all the particles are equal at time $t$. In general, $V_n(t)$ is zero if and only if at least $N-n+1$ collisions take place at time $t$. 

Since $V_n$ as a function of $(\lambda_1,\ldots,\lambda_p)$ is a symmetric polynomial, it can by expressed as a polynomial  of 
$ y_1=e_1(\Lambda),\ldots,y_p=e_p(\Lambda)$.
Processes $y_1,\ldots, y_p$ are  defined as a solution of the system (\ref{eq:en:SDEs})-(\ref{bracket}), so by It\^o formula,  $V_n$ are semimartingales. This is a reason why we consider the squares of differences between particles instead of studying the differences themselves. We begin with exploring the semimartingale structure of $V_n$, $n=1,\ldots,N$ in the case when $V_N(0)>0$, i.e. the system starts from a point having no collisions.  Then the semimartingale $\Lambda=(\lambda_1,\ldots,\lambda_p)$ is a solution of the system (\ref{eq:eigenvalues:SDE:general}) up to the first collision time. Thus we can apply the It\^o formula to (\ref{eq:eigenvalues:SDE:general}) which significantly simplifies calculations.

 Since $e_n(A)=a_{ij}e_{n-1}^{\overline{a}_{ij}}(A)+e_n^{\overline{a}_{ij}}(A)$ and $e_{n-1}^{\overline{a}_{ij}}(A)=a_{kl}e_{n-2}^{\overline{a}_{ij},\overline{a}_{kl}}(A)+e_{n-1}^{\overline{a}_{ij},\overline{a}_{kl}}(A)$, the It\^o formula implies
	\formula{
	 dV_n = \sum_{i<j}e_{n-1}^{\overline{a}_{ij}}(A)da_{ij}+\frac12 \sum_{i<j}\sum_{k<l}e_{n-2}^{\overline{a}_{ij},\overline{a}_{kl}}(A)d\left<a_{ij},a_{kl}\right>\/.
	}
	Note that $d\left<a_{ij},a_{kl}\right>\neq 0$ if and only if  $\{i,j\}\cap\{k,l\}\neq \emptyset$. 
	Moreover
		\formula{
	  da_{ij} &= 2(\lambda_i-\lambda_j)d\lambda_i+2(\lambda_j-\lambda_i)d\lambda_j+d\left<\lambda_i,\lambda_i\right>+d\left<\lambda_j,\lambda_j\right>\\
		&= 2(\lambda_i-\lambda_j)\sigma_i(\lambda_i)dB_i+2(\lambda_j-\lambda_i)\sigma_j(\lambda_j)dB_j+2(\lambda_i-\lambda_j)(b_i(\lambda_i)-b_j(\lambda_j))dt\\
		& +(\sigma_i^2(\lambda_i)+\sigma_j^2(\lambda_j))dt+ 2(\lambda_i-\lambda_j)\sum_{k\neq i}\frac{H_{ik}(\lambda_i,\lambda_k)}{\lambda_i-\lambda_k}dt+2(\lambda_j-\lambda_i)\sum_{k\neq i}\frac{H_{jk}(\lambda_j,\lambda_k)}{\lambda_j-\lambda_k}dt\/,
	}
	which gives
	\formula{
	   d\left<a_{ij},a_{ik}\right> = 4(\lambda_i-\lambda_j)(\lambda_i-\lambda_k)\sigma_i^2(\lambda_i)dt\/.
	}
	Consequently, the martingale part of $dV_n$ is described by
	\formula[eq:Vn:martingale]{	
	  dM_n &= 2\sum_{i=1}^p \sum_{j\neq i}(\lambda_i-\lambda_j)e_{n-1}^{\overline{a}_{ij}}(A)\sigma_i(\lambda_i)dB_i\/,\\
		\nonumber
		d\left<M_n,M_n\right> &= 4\sum_{i=1}^p\sigma_i^2(\lambda_i)\left(\sum_{j\neq i}(\lambda_i-\lambda_j)e_{n-1}^{\overline{a}_{ij}}(A)\right)^2dt\/.
	}
Finally, the drift part of $V_n$ is   (using the notation $i\neq j\neq k$  when no equality between any  two of  the indices $i,j,k$ holds)
	\formula[eq:Vn:drift]{
	  D_ndt &= \sum_{i=1}^p \sigma_i^2(\lambda_i)\sum_{j\neq i}e_{n-1}^{\overline{a}_{ij}}(A)dt+4\sum_{i<j}H_{ij}(\lambda_i,\lambda_j)e_{n-1}^{\overline{a}_{ij}}(A)dt\\
		\nonumber
		&+2\sum_{i=1}^p\sum_{j\neq k\neq i}(\lambda_i-\lambda_j)(\lambda_i-\lambda_k)e_{n-2}^{\overline{a}_{ij},\overline{a}_{ik}}(A)\sigma_i^2(\lambda_i)dt\\
		&+2\sum_{i\neq j\neq k}(\lambda_i-\lambda_j)\frac{H_{ik}(\lambda_i,\lambda_k)}{\lambda_i-\lambda_k}e_{n-1}^{\overline{a}_{ij}}(A)dt+2\sum_{i<j}(\lambda_j-\lambda_i)(b_j(\lambda_j)-b_i(\lambda_i))e_{n-1}^{\overline{a}_{ij}}(A)dt 	\nonumber\/.
	}
Observe that using the relation $e_{n-1}^{\overline{a}_{ij}}(A) = a_{ik}e_{n-2}^{\overline{a}_{ij},\overline{a}_{ik}}(A)+e_{n-1}^{\overline{a}_{ij},\overline{a}_{ik}}(A)$ and the symmetry property (C2), the fourth term can be written as
\formula[eq:Vn:drift:4th_term]{
&2\sum_{i\neq j\neq k}(\lambda_i-\lambda_j)\frac{H_{ik}(\lambda_i,\lambda_k)}{\lambda_i-\lambda_k}e_{n-1}^{\overline{a}_{ij}}(A)dt= \\
   &2\sum_{i\neq j\neq k}(\lambda_i-\lambda_j)(\lambda_i-\lambda_k){H_{ik}(\lambda_i,\lambda_k)}e_{n-2}^{\overline{a}_{ij},\overline{a}_{ik}}(A)dt+2\sum_{i<k}\sum_{j\neq i,j\neq k}H_{ik}(\lambda_i,\lambda_k)e_{n-1}^{\overline{a}_{ij},\overline{a}_{ik}}(A)dt \nonumber\/,
}
so it is well-defined even if any collision occurs. Moreover, the second part is non-negative and the first one vanishes when $V_n=0$. 

Similarly as for SDEs for symmetric polynomials $y_i=e_i(X)$, the stochastic differential equations describing $V_n(\Lambda)$ do not contain singularities $(x_i-x_j)^{-1}$.

 We finish by observing  that the polynomials $V_n$ fulfill equations (\ref{eq:Vn:martingale}) and (\ref{eq:Vn:drift}) also if $V_N(0)=0$, with $\lambda_i=f_i(Y), i=1,\dots,p$. This follows from the fact that the polynomials $V_n$ are smooth polynomial functions of the semimartingales $y_1,\dots,y_p$, satisfying the SDEs system  
(\ref{eq:en:SDEs}). Thus, by the unicity of the martingale and drift part of a semimartingale, the It\^o formula applied
to (\ref{eq:en:SDEs}) gives the equations (\ref{eq:Vn:martingale}) and (\ref{eq:Vn:drift}). But computing derivatives of the functions $V_n=V_n(f(Y))$ required in the It\^o formula
does not depend on the initial condition $V_N(0)$. Such  argument allows us to avoid looking for explicit  relations between the polynomials $V_n$ and $e_n$. A similar argument was used  in \cite{bib:b91}. 

We resume the results of this subsection in the following proposition.
\begin{proposition}
The semimartingales $V_n$, $n=1,\dots, N$, with $V_n(0)\in V_n(\overline{C_+})$ decompose into the martingale and drift part
$$
V_n=M_n+D_n
$$ with  $M_n$ given by  (\ref{eq:Vn:martingale}) and $D_n$ given by (\ref{eq:Vn:drift}) and (\ref{eq:Vn:drift:4th_term}), where $\lambda_i=f_i(Y)$ and
$Y=(y_1,\dots,y_p)$ is a solution of the SDEs system (\ref{eq:en:SDEs}). 
\end{proposition}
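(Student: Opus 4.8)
The plan is to establish the decomposition first in the non-degenerate case $V_N(0)>0$ by a direct It\^o computation on the particle equations, and then to extend it to an arbitrary admissible starting point by a uniqueness argument. For the first step I would use that, by the discussion following Definition \ref{fY}, when $V_N(0)>0$ the process $\Lambda=(\lambda_1,\dots,\lambda_p)$ solves (\ref{eq:eigenvalues:SDE:general}) up to the first collision time, so It\^o's formula applies directly to the particle SDEs. Expanding $e_n(A)=a_{ij}e_{n-1}^{\overline{a}_{ij}}(A)+e_n^{\overline{a}_{ij}}(A)$ and its one-degree-lower analogue yields $dV_n=\sum_{i<j}e_{n-1}^{\overline{a}_{ij}}(A)\,da_{ij}+\frac12\sum_{i<j}\sum_{k<l}e_{n-2}^{\overline{a}_{ij},\overline{a}_{kl}}(A)\,d\langle a_{ij},a_{kl}\rangle$; substituting $da_{ij}$ computed from the equations for $\lambda_i$ and $\lambda_j$, noting that $d\langle a_{ij},a_{kl}\rangle$ is nonzero only when $\{i,j\}\cap\{k,l\}\neq\emptyset$ with $d\langle a_{ij},a_{ik}\rangle=4(\lambda_i-\lambda_j)(\lambda_i-\lambda_k)\sigma_i^2(\lambda_i)\,dt$, and collecting terms, one reads off the martingale part (\ref{eq:Vn:martingale}) and the drift (\ref{eq:Vn:drift}). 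The only term still carrying a factor $(\lambda_i-\lambda_k)^{-1}$ is the fourth one in (\ref{eq:Vn:drift}); using $e_{n-1}^{\overline{a}_{ij}}(A)=a_{ik}e_{n-2}^{\overline{a}_{ij},\overline{a}_{ik}}(A)+e_{n-1}^{\overline{a}_{ij},\overline{a}_{ik}}(A)$ together with the symmetry (\ref{eq:H:symmetry}) recasts it as (\ref{eq:Vn:drift:4th_term}), which is singularity-free.

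For a general $V_n(0)\in V_n(\overline{C_+})$, the point is that each $V_n$ is a symmetric polynomial in $\lambda_1,\dots,\lambda_p$, hence a polynomial $P_n$ in $y_1=e_1(\Lambda),\dots,y_p=e_p(\Lambda)$, and $Y=(y_1,\dots,y_p)$ solves the \emph{non-singular} system (\ref{eq:en:SDEs}). Thus $V_n=P_n(Y)$ is a semimartingale, and It\^o's formula applied to (\ref{eq:en:SDEs}) gives it a martingale part and a bounded-variation part whose coefficients are continuous functions of $Y$ computed from the derivatives of $P_n$ and from $a_n,q_n,s_{n,m}$, independently of the initial condition $V_N(0)$. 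When $V_N(0)>0$ this decomposition must coincide, by uniqueness of the canonical decomposition of a continuous semimartingale, with the one found above. Since the right-hand sides of (\ref{eq:Vn:martingale}) and of (\ref{eq:Vn:drift})--(\ref{eq:Vn:drift:4th_term}) are continuous functionals of $\Lambda=f(Y)$, agreement on the open set of initial points with $V_N(0)>0$ forces them to provide the decomposition for every admissible $V_n(0)$; this is the device already indicated before the Proposition (cf.\ \cite{bib:b91}), and it avoids writing $V_n$ explicitly in terms of the $e_k$.

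The step I expect to require the most care is this last transfer. A priori the formulas (\ref{eq:Vn:martingale})--(\ref{eq:Vn:drift}) are obtained through the singular particle SDEs and are meaningful only while $\Lambda$ has no collisions; the resolution is that after the rearrangement (\ref{eq:Vn:drift:4th_term}) every singular factor $(\lambda_i-\lambda_k)^{-1}$ has disappeared, so these expressions define continuous processes along any trajectory of $\Lambda$, while the It\^o computation that actually produces the decomposition is performed on (\ref{eq:en:SDEs}), which is free of singularities and insensitive to whether $\Lambda$ has ever collided. Uniqueness of the martingale-plus-finite-variation decomposition then pins down $M_n$ and $D_n$.
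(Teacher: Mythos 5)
Your proposal is correct and follows essentially the same route as the paper: an It\^o computation on the particle SDEs for $V_N(0)>0$, the rewriting (\ref{eq:Vn:drift:4th_term}) to remove the factors $(\lambda_i-\lambda_k)^{-1}$, and then the transfer to arbitrary initial points via It\^o's formula applied to the non-singular system (\ref{eq:en:SDEs}) together with uniqueness of the semimartingale decomposition and the observation that the resulting coefficients do not depend on the initial condition. Your extra remark that the continuity of the rearranged expressions is what justifies the extension is a fair elaboration of the paper's (terser) argument, citing the same device from \cite{bib:b91}.
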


\subsection{ Collision time when starting from a regular state}
\label{sec:CollisionTime}
In this section we consider the first collision time defined in terms of the semimartingale $V_N$ by
\formula{
  T = \inf\{t>0: V_N(t)=0\}\/,
	}
with standard convention that $\inf\emptyset = \infty$. 

We begin with a generalization of  Theorem 5 from \cite{bib:gm13}. We show that under certain conditions on coefficients of the equation the particles never collide when the starting point does not have any collisions. 
\begin{proposition}
\label{prop:collisiontime:distinct}
  Let $(V_1,\ldots,V_N)$ be semimartingales described by equations (\ref{eq:Vn:martingale}), (\ref{eq:Vn:drift}) and (\ref{eq:Vn:drift:4th_term}).
	Suppose that $V_N(0)>0$ and (A2) together with (A3) hold. If the functions $x\to b_i(x)$ are Lipschitz continuous or non-decreasing and they satisfy condition (A5), then $T=\infty$ almost surely.  
\end{proposition}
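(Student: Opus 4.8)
The plan is to show that $T=\infty$ by analysing the semimartingale $V_N(t)=\prod_{i<j}(\lambda_i(t)-\lambda_j(t))^2$ — or, to control logarithmic blow-up, the process $L_t=-\log V_N(t\wedge T)$ — and proving that it cannot reach $0$ (resp. $+\infty$) in finite time. The classical device is a McKean-type argument: if one can show that $\log V_N$ is a supermartingale plus an absolutely continuous correction with controlled drift up to time $T$, then since $\log V_N(0)>-\infty$, the process $V_N$ stays strictly positive. Concretely, I would apply It\^o's formula to $\log V_N$ using the decomposition $V_N=M_N+D_N$ from the previous Proposition, giving
\formula{
  d\log V_N = \frac{dM_N}{V_N} - \frac{1}{2}\frac{d\langle M_N,M_N\rangle}{V_N^2} + \frac{D_N\,dt}{V_N}.
}
The heart of the matter is then to bound $-\frac{1}{2}\frac{d\langle M_N,M_N\rangle}{V_N^2} + \frac{D_N}{V_N}$ from above by something of the form $C(1+\sum_i\lambda_i^2)\,dt$, i.e. with no negative power of the gaps $\lambda_i-\lambda_j$ surviving.

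The key algebraic step is to combine the martingale-bracket term with the $\sigma^2$ and $H$ contributions in $D_N$ pairwise. For $n=N$ one has the clean identities $e_{N-1}^{\overline a_{ij}}(A)=V_N/a_{ij}$ and $e_{N-2}^{\overline a_{ij},\overline a_{ik}}(A)=V_N/(a_{ij}a_{ik})$, so after dividing by $V_N^2$ the dangerous terms collapse to sums of $1/(\lambda_i-\lambda_j)^2$ and $1/((\lambda_i-\lambda_j)(\lambda_i-\lambda_k))$. Grouping the four terms involving particle $i$ and the pair $\{j,k\}$ produces, schematically, $\sigma_i^2/(\lambda_i-\lambda_j)(\lambda_i-\lambda_k)$ cross-terms which telescope via the partial-fraction identity $\frac{1}{(\lambda_i-\lambda_j)(\lambda_i-\lambda_k)}+\frac{1}{(\lambda_j-\lambda_i)(\lambda_j-\lambda_k)}+\frac{1}{(\lambda_k-\lambda_i)(\lambda_k-\lambda_j)}=0$; what remains from the $\sigma^2$ block is a sum of $\sigma_i^2/(\lambda_i-\lambda_j)^2$ against which the bracket term $-\tfrac12 d\langle M_N,M_N\rangle/V_N^2$ must dominate. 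This is precisely where (A2) enters: the bound $\sigma_i^2(x)+\sigma_j^2(y)\le c(x-y)^2+4H_{ij}(x,y)$ is tuned so that the coefficient $4$ in front of $H$ in $D_N$ exactly compensates the $\tfrac12$-It\^o correction, leaving a nonnegative quadratic form plus a harmless $c\sum(x-y)^2$ term. The condition (A3) is used analogously to handle the mixed $H_{ik}/(\lambda_i-\lambda_k)$ term in $D_N$ (the fourth term, rewritten as \eqref{eq:Vn:drift:4th_term}): the triangle-type inequality in (A3) converts the three-particle repulsion contributions into $c(z-y)(z-x)(y-x)$-type bounds once divided by the gaps.

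The drift-coefficient hypotheses are the easy part: the last term of $D_N$, namely $2\sum_{i<j}(\lambda_j-\lambda_i)(b_j(\lambda_j)-b_i(\lambda_i))e_{n-1}^{\overline a_{ij}}(A)$, divided by $V_N$, becomes $2\sum_{i<j}(b_j(\lambda_j)-b_i(\lambda_i))/(\lambda_j-\lambda_i)$, and (A5) makes each summand nonnegative while Lipschitz continuity (or monotonicity) of $b_i$ bounds it by a constant; so this term only helps or is controlled. Having assembled the estimate
\formula{
  d\log V_N \ge dM_N/V_N - C\Bigl(1+\textstyle\sum_i\lambda_i^2\Bigr)\,dt
}
(with $dM_N/V_N$ a local martingale and $\sum_i\lambda_i^2=R_t$ already shown finite a.s. in the non-explosion Proposition), I would introduce stopping times $T_\eps=\inf\{t:V_N(t)\le\eps\}$, take expectations of $\log V_N(t\wedge T_\eps)$, and use Gronwall together with the a.s. finiteness of $\int_0^t R_s\,ds$ on $[0,t\wedge T]$ to conclude $\ex[\log V_N(t\wedge T_\eps)]\ge \log V_N(0)-C'(t)>-\infty$ uniformly in $\eps$; letting $\eps\to 0$ forces $\pr(T\le t)=0$ for every $t$, hence $T=\infty$ a.s. The main obstacle I anticipate is purely computational bookkeeping: carefully matching the combinatorial coefficients produced by the incomplete symmetric polynomials $e_{N-1}^{\overline a_{ij}}$, $e_{N-2}^{\overline a_{ij},\overline a_{ik}}$ after division by $V_N^2$, and verifying that the cancellations assemble exactly into the quadratic forms that (A2) and (A3) are designed to dominate — getting the constants and the index ranges right (especially the distinction $i\neq j\neq k$ versus $i<j<k$) is where an error would most easily creep in.
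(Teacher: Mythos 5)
Your proposal follows essentially the same route as the paper: the authors apply It\^o's formula to $U_t=-\tfrac12\ln V_N(t)$, observe that after the partial-fraction cancellations the drift splits into exactly the three blocks you describe (a $b$-difference quotient controlled by (A5) and Lipschitz/monotonicity, a two-particle block of the form $(\sigma_i^2+\sigma_j^2-4H_{ij})/(\lambda_j-\lambda_i)^2$ dominated via (A2), and a three-particle block dominated via (A3)), and then conclude by the McKean argument. The only cosmetic difference is that the paper gets a drift bound by a constant $c$ rather than your $C(1+\sum_i\lambda_i^2)$, but either suffices.
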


\begin{proof}
The proof is similar to the proof of Theorem 5 from \cite{bib:gm13}.
   Defining $U_t=-\frac12 \ln V_N(t)$ on $[0,T)$ and applying the It\^o formula we obtain the martingale part of $U$  equal to 
	\formula{
	\sum_{i<j}\frac{\sigma_i^2(\lambda_i)dB_i-\sigma_j^2(\lambda_j)dB_j}{\lambda_i-\lambda_j}
	}
	and the following representation of the finite-variation part
	\formula{
	\textrm{drift}[U]_t &= \sum_{i<j}\frac{b_i(\lambda_i)-b_j(\lambda_j)}{\lambda_j-\lambda_i}+\frac12\sum_{i< j} \frac{\sigma_i^2(\lambda_i)+\sigma_j^2(\lambda_j)-4H_{ij}(\lambda_i,\lambda_j)}{(\lambda_j-\lambda_i)^2}\\
		&+\sum_{i<j<k}\frac{H_{jk}(\lambda_j,\lambda_k)(\lambda_k-\lambda_j)-H_{ik}(\lambda_i,\lambda_k)(\lambda_k-\lambda_i)+H_{ij}(\lambda_i,\lambda_j)(\lambda_j-\lambda_i)}{(\lambda_k-\lambda_j)(\lambda_k-\lambda_i)(\lambda_j-\lambda_i)}\/.
	}
	
	Note that conditions (A2) and (A3) together with the assumptions on $b_i(x)$ ensure that there exists $c\geq 0$ such that $\textrm{drift}[U]_t\leq ct$, which implies finiteness of the finite-variation part of $U$ whenever $t$ is bounded. Applying McKean argument we obtain the result. 
\end{proof}

\subsection{Instant diffraction}
Now we consider the case when $V_N(0)=0$. If the process starts from a collision point, then we must first study the question of the instant diffraction (i.e. becoming different) of the particles. We begin with showing that under certain conditions imposed on the coefficients of the equation such phenomenon takes place, i.e. the stopping times  
\formula{
\tau_n= \inf\{t>0: V_n(t)>0\}, \ n=1,\dots, N,
}
are $0$ with probability one.

%

\begin{proposition}
\label{prop:collisiontime:glued}
   Let $(V_1,\ldots,V_N)$ be  semimartingales described by (\ref{eq:Vn:martingale}) and (\ref{eq:Vn:drift}) and such that $V_N(0)= 0$, where $\lambda_i=f_i(Y)$ and
$Y=(y_1,\dots,y_p)$ is a solution of the SDEs system (\ref{eq:en:SDEs}).
	If (A4) holds then $\tau_N=0$ almost surely.

\end{proposition}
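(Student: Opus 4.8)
The plan is to show that if $V_N(0)=0$, then with probability one there is no interval $[0,\epsilon)$ on which $V_N$ remains identically zero; equivalently, the particle configuration cannot stay on the diagonal set $\{V_N=0\}$ for a positive amount of time. The strategy is an induction on the "depth" of the collision, reducing everything to the two already-proved facts: Proposition \ref{prop:Degenerated} handles multiple collisions at a degenerate point, and Proposition \ref{prop:collisiontime:distinct} handles escape once only two particles coincide; the present statement glues these together. First I would decompose the event $\{\tau_N>0\}$ according to which pairs of particles coincide initially and whether the common coincidence point lies in one of the sets $G_{kl}$. By relabelling and by the continuity of the paths $\lambda_i=f_i(Y)$, it suffices to work on the event that $\lambda_k(0)=\ldots=\lambda_l(0)=x$ for some maximal block $S=\{k,\ldots,l\}$ while the neighbouring particles are strictly separated on a (random, positive with positive probability) time interval $[0,\tilde\tau)$.

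The core of the argument is a case split at the common point $x$. \emph{Case 1: $x\notin G_{kl}$.} Then by definition of $G_{kl}$ there exist indices $k\le i<j\le l$ with $\sigma_i^2(x)+\sigma_j^2(x)+H_{ij}(x,x)>0$, so either the martingale part $2(\lambda_i-\lambda_j)\sigma_i(\lambda_i)dB_i$-type terms or the positive repulsion term $4H_{ij}(\lambda_i,\lambda_j)e^{\overline a_{ij}}_{n-1}(A)$ in the drift (\ref{eq:Vn:drift}) is nondegenerate near $x$; I would feed this into the semimartingale $-\tfrac12\ln V_m$ for an appropriate $m$ (or directly into $V_m$) to force $V_m$ off zero instantly, exactly as in the McKean-type argument of Proposition \ref{prop:collisiontime:distinct}. \emph{Case 2: $x\in G_{kl}$, i.e. a multiple degenerate point.} Here Proposition \ref{prop:Degenerated} applies and tells us that $\gamma(t)=\sum_i\ind_{\{\lambda_i(t)=x\}}\le 1$ for all $t>0$, so instantly the block $S$ breaks up into singletons at $x$ plus particles that have moved away. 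After this instantaneous break-up, any remaining two-particle collision either occurs at a non-degenerate point, reducing to Case 1, or the two particles have already separated; iterating via the Markov property and continuity of paths (the same rational-time covering trick used at the end of the proof of Proposition \ref{prop:Degenerated}) shows that all pairwise differences become nonzero for every $t>0$, hence $V_N(t)>0$ for $t>0$, i.e. $\tau_N=0$ a.s.

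I would organize the induction on $|S|=l-k+1\ge 2$: the base case $|S|=2$ is precisely Case 1 together with (when the point is degenerate) Proposition \ref{prop:Degenerated} with $\gamma=2$, which already gives $\tau_N=\tau_0=0$; the inductive step uses Proposition \ref{prop:Degenerated} to reduce $\gamma(q)\le\gamma-1$ at some arbitrarily small rational time $q$ and then the Markov property to invoke the hypothesis for a smaller block. Combining the estimates $\pr_\gamma(\tau_N>t)\le\sum_{q\in[0,t)\cap\Q}\ex_\gamma[\pr_{\gamma(q)}(\tau_N>t-q),\gamma(q)\le\gamma-1]=0$ closes the induction.

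The main obstacle I anticipate is Case 1 bookkeeping: when several particles sit at the same non-degenerate point $x$, I cannot directly apply the $-\tfrac12\ln V_N$ argument of Proposition \ref{prop:collisiontime:distinct} because that proof assumed $V_N(0)>0$ and used separation of all particles; here I must choose the right $V_m$ (with $m=N-\binom{|S|}{2}+1$, say, the first index that is sensitive to a single collision within the block) and carefully verify that its drift (\ref{eq:Vn:drift}) together with the nonnegative rewritten fourth term (\ref{eq:Vn:drift:4th_term}) stays bounded below or that its martingale bracket is nondegenerate, so that a comparison or McKean-type argument forces $V_m>0$ instantly. Handling the interplay between the particles outside the block (which contribute to the coefficients multiplicatively and are bounded away from $x$) and the particles inside will require care, but the singularity-free structure of (\ref{eq:Vn:martingale})--(\ref{eq:Vn:drift:4th_term}) established in the previous subsection is exactly what makes this tractable.
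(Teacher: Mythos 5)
Your global architecture matches the paper's: isolate the degenerate case via Proposition \ref{prop:Degenerated}, and glue everything with the Markov property and the rational-time covering. But the core analytic step --- why the particles actually separate instantly at a \emph{non-degenerate} collision point --- is not proved; you explicitly flag it as the ``main obstacle'' and propose a McKean-type argument on $-\tfrac12\ln V_m$. That tool does not apply here: the McKean argument of Proposition \ref{prop:collisiontime:distinct} shows non-attainability of zero for a process started at a \emph{positive} value (it controls the finite-variation part of $-\tfrac12\ln V_N$, which is finite only while $V_N>0$); when $V_m(0)=0$ the process $-\tfrac12\ln V_m$ starts at $+\infty$ and the argument has no content. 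Your alternative suggestion (``drift bounded below, comparison'') is closer to workable, but you never verify it, and even if you did, it would only show that your chosen $V_m$ leaves zero instantly, i.e.\ that the block breaks up \emph{partially}; getting from there to $V_N>0$ instantly (full separation, possibly with several simultaneous blocks at different locations) still requires an iteration that your block-size induction does not cleanly supply.

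The paper's mechanism is different and closes exactly this gap. Suppose $\tau_n>0$, so $V_n\equiv 0$ on $[0,\tau_n)$; then by uniqueness of the semimartingale decomposition both $M_n$ and the drift vanish identically there. On the set $\{V_n=0\}$ all signed terms of (\ref{eq:Vn:drift}) vanish (each contains a factor $(\lambda_i-\lambda_j)e_{n-1}^{\overline{a}_{ij}}(A)$ or $(\lambda_i-\lambda_j)(\lambda_i-\lambda_k)e_{n-2}^{\overline{a}_{ij},\overline{a}_{ik}}(A)$, which is zero whether or not $\lambda_i=\lambda_j$), so the drift reduces to the \emph{nonnegative} sum
\begin{equation*}
\sum_{i=1}^p\sigma_i^2(\lambda_i)\sum_{j\neq i}e_{n-1}^{\overline{a}_{ij}}(A)+4\sum_{i<j}H_{ij}(\lambda_i,\lambda_j)e_{n-1}^{\overline{a}_{ij}}(A)=0 .
\end{equation*}
Working on the interval where the configuration stays in the open set $E$ of non-degenerate points, one checks pair by pair that this forces $e_{n-1}^{\overline{a}_{ij}}(A)\equiv 0$ for every $i\neq j$ (using positivity of $\sigma_i^2+\sigma_j^2+H_{ij}$ when $\lambda_i=\lambda_j$, and the identity $V_n=a_{ij}e_{n-1}^{\overline{a}_{ij}}+e_n^{\overline{a}_{ij}}$ when $\lambda_i\neq\lambda_j$), hence $V_{n-1}\equiv 0$. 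Iterating downward from $n=N$ to $n=1$ yields $D_1=(p-1)\sum_i\sigma_i^2(\lambda_i)+4\sum_{i<j}H_{ij}(\lambda_i,\lambda_j)=0$ with all particles glued, contradicting non-degeneracy. This downward induction on the polynomial index $n$ handles in one stroke the choice of $m$, the multiple-block configurations, and the passage from partial to full separation --- precisely the bookkeeping your proposal leaves open. The degenerate case and the final Markov-property patching are then as you describe.
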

\begin{proof}
First we assume that if $\lambda_i(0)=\lambda_j(0)=x\in \R$ for some $i\neq j$, then $\sigma_i^2(x)+
\sigma_j^2(x)+H_{ij}(x,x)>0$, i.e. $\Lambda(0)$ belongs to the set
\formula{
   E = \{(x_1,\ldots,x_p)\in\R^p: (x_i\neq x_j)\vee (\sigma_i^2(x_i)+\sigma_j^2(x_i) +H_{ij}(x_i,x_j)>0)\/,\quad \textrm{ for every }i\neq j\}\/.
}
Since the functions $\sigma_i^2(x)$ and $H_{ij}(x,x)$ are continuous, the set $E$ is open. The continuity of the paths implies that there exists a positive stopping time $\tau$ such that on the interval $[0,\tau)$ the system stays in $E$, i.e. if $ \lambda_i(0)\neq \lambda_j(0)$, then
\formula[eq:cond1]{
    \lambda_i(t)\neq \lambda_j(t)\/,\quad t<\tau
}
and if $\lambda_i(0)=\lambda_j(0)$ for some $i\neq j$, then
\formula[eq:cond2]{
   \sigma_i^2(\lambda_i(t))+\sigma_j^2(\lambda_i(t))    +H_{ij}(\lambda_i(t),\lambda_i(t))>0\/,\quad t<\tau\/.
}
Now inductively we show that if $\tau_n>0$ with positive probability, then the probability that $\tau_{n-1}>0$ is also positive. Note that if $V_n(t)=0$ on $[0,\tau_n)$, then its finite-variation part vanishes on $[0,\tau_n\wedge\tau)$.
Since then $(\lambda_i-\lambda_j)e_{n-1}^{\overline{a}_{ij}}(A)=0$ and $(\lambda_i-\lambda_j)(\lambda_i-\lambda_k)e_{n-2}^{\overline{a}_{ij},\overline{a}_{ik}}(A)=0$ using (\ref{eq:Vn:drift}) we obtain that
\formula[eq:proof:estimate1]{
   \sum_{i=1}^p\sigma_i^2(\lambda_i)\sum_{j\neq i}e_{n-1}^{\overline{a}_{ij}}(A)+4\sum_{i<j}H_{ij}(\lambda_i,\lambda_j)e_{n-1}^{\overline{a}_{ij}}(A)= D_n =0
}
Now let us fix $i\neq j$. If $\lambda_{i}(t)= \lambda_j(t)$ at $t\in[0,\tau_n\wedge\tau)$ then by (\ref{eq:cond1}) we have $\lambda_{i}(0)= \lambda_j(0)$ and consequently by (\ref{eq:cond2}) one of the functions $\sigma_i^2(x)$, $H_{ij}(x,x)$ is positive at $\lambda_i(t)$. Then, the above-given equality implies that  $e_{n-1}^{\overline{a}_{ij}}(A)=0$ at $t$. If $\lambda_{i}(t)\neq \lambda_j(t)$, since $V_n(t)=0$ and
\formula{
   V_n(t)=a_{ij}e_{n-1}^{\overline{a}_{ij}}(A)+e_{n}^{\overline{a}_{ij}}(A)\/,
}
we also get $e_{n-1}^{\overline{a}_{ij}}(A)=0$. It means that for every $t\in [0,\tau_n\wedge\tau)$ and every $i$ and $j$ we have $e_{n-1}^{\overline{a}_{ij}}(A)=0$ which implies that $V_{n-1}(t)=0$ on this interval. 

Finally, if we assume that $\tau_N>0$ with positive probability, then the first part of the proof implies that $\tau_1>0$ with positive probability, i.e. if a pair of particles remains glued for some positive time, then all of the particles are glued for some time. But for $n=1$ the formula given in (\ref{eq:proof:estimate1}) reads as
\formula{
    (p-1)\sum_{i=1}^p\sigma_i^2(\lambda_i)+4\sum_{i<j}H_{ij}(\lambda_i,\lambda_j)= D_1 =0
}
for every $t\in[0,\tau_1\wedge \tau)$ and it is a contradiction with (\ref{eq:cond2}). Note that if there exists $i\neq j$ such that $\lambda_i(0)\neq \lambda_j(0)$ (i.e. $V_n(0)>0$ for some $n\geq 1$) then this finite induction can be stopped at level $n$, because we obtain then a contradiction with the continuity of the paths.

Now we consider the remaining case $\Lambda(0)\notin E$. It means that the process $\Lambda$ starts from a multiple degenerate point. By condition  (A4) and Proposition \ref{prop:Degenerated} we obtain that the system immediately visits the set $E$. Thus, the standard argument based on the Markov property and the continuity of the paths together with the above-given proof for the case when $\Lambda(0)\in E$ give
\formula{
\pr_{\Lambda(0)}(\tau_N>t) &\leq \sum_{q\in[0,t)\cap \Q} \pr_{\Lambda(0)}(\tau_N>t,\Lambda(q)\in E)\\
& = \sum_{q\in[0,t)\cap \Q} \ex_{\Lambda(0)}[\pr_{\Lambda(q)}(\tau_N>t-q), \Lambda(q)\in E] = 0\/.
}
Consequently $\pr_{\Lambda(0)}(\tau_N=0)=1$ almost surely even if $\Lambda(0)\notin E$. This ends the proof.
\end{proof}
\subsection{No collision after instant diffraction}
Now we can state the main result of this section.
\begin{theorem}
\label{thm:TN:infty}
    Let $(V_1,\ldots,V_N)$ be  semimartingales described by (\ref{eq:Vn:martingale}) and (\ref{eq:Vn:drift}) and such that $V_n(0)\geq 0$ for $n=1,\ldots,N$, where $\lambda_i=f_i(Y)$ and
$Y=(y_1,\dots,y_p)$ is a solution of the SDEs system (\ref{eq:en:SDEs}). If the assumptions (A2)-(A5) hold then $T=\infty$ almost surely.
\end{theorem}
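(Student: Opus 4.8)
The plan is to derive Theorem~\ref{thm:TN:infty} from the two facts already established: Proposition~\ref{prop:collisiontime:glued}, which via (A4) yields instant diffraction out of any collision, and Proposition~\ref{prop:collisiontime:distinct}, which via (A2), (A3), (A5) and the regularity of the $b_i$ yields that no collision ever occurs when the start is regular. These would be spliced together using the Markov property of the solution $Y$ of (\ref{eq:en:SDEs})--(\ref{bracket}) (hence of $\Lambda$) at rational times, exactly as at the ends of the proofs of Propositions~\ref{prop:Degenerated} and~\ref{prop:collisiontime:glued}. Since $V_N=\prod_{i<j}(\lambda_i-\lambda_j)^2\ge 0$ is a continuous process, it is enough to show $\pr\big(V_N(s)=0\text{ for some }s>0\big)=0$, which is exactly the assertion $T=\infty$ a.s.

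First I would record the consequence of Proposition~\ref{prop:collisiontime:distinct}: if $V_N(q)>0$, then conditionally on $\mathcal F_q$ the process restarted at $q$ is again of the type treated there, so $\pr\big(\{V_N(q)>0\}\cap\{V_N(s)=0\text{ for some }s\ge q\}\big)=0$. Intersecting over $q\in\Q\cap(0,\infty)$ yields a full-probability event $\Omega_0$ on which, for every such $q$, $V_N(q)>0$ forces $V_N>0$ on all of $[q,\infty)$. Next, if $V_N(0)=0$, Proposition~\ref{prop:collisiontime:glued} gives $\tau_N=\inf\{t>0:V_N(t)>0\}=0$ a.s., so on a full-probability event $\Omega_1$ the open set $\{t>0:V_N(t)>0\}$ is non-empty with infimum $0$; if instead $V_N(0)>0$ the same holds trivially by continuity. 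In all cases we work on $\Omega_0\cap\Omega_1$.

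Finally, a covering argument closes the matter on $\Omega_0\cap\Omega_1$: if $V_N(s^*)=0$ for some $s^*>0$, then, since $\{t>0:V_N(t)>0\}$ is open with infimum $0<s^*$, it contains a rational $q\in(0,s^*)$, whence $V_N(q)>0$ and $\Omega_0$ forces $V_N(s^*)>0$, a contradiction; so no such $s^*$ exists and $T=\infty$ a.s. I do not expect any new analytic input — no fresh estimate on the semimartingales $V_n$ is required. The one point that will need care is precisely this last bootstrap: Proposition~\ref{prop:collisiontime:glued} by itself says only that the particles separate at arbitrarily small times (it does not even exclude $T=0$ through oscillation of $V_N$ near $0$), and it is the no-return statement of Proposition~\ref{prop:collisiontime:distinct}, grafted in through the Markov property and a null-set bookkeeping over rational restart times, that upgrades this to ``the particles never recollide.''
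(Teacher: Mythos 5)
Your proposal is correct and follows essentially the same route as the paper: both combine Proposition~\ref{prop:collisiontime:glued} (instant diffraction) with Proposition~\ref{prop:collisiontime:distinct} (no collision from a regular start), glued by the Markov property at countably many rational restart times; the paper phrases this as first showing $V_N(t)>0$ a.s.\ for each fixed $t$ and then that $T^{2/n}=\infty$ a.s., while you package the same content into two full-measure events and a deterministic covering argument. Your closing remark --- that Proposition~\ref{prop:collisiontime:glued} alone does not exclude recollision and that the no-return statement must be grafted in via the Markov property --- is precisely the point the paper's proof is organized around.
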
 
\begin{proof}
   If $V_N(0)>0$ then it is just the result given in Proposition \ref{prop:collisiontime:distinct}. If $V_N(0)=0$, then for every $t>0$, by continuity of the paths and Proposition \ref{prop:collisiontime:glued}, we have
	\formula{
	  \pr_0\left( \bigcap_{q\in[0,t)\cap\Q}\{V_N(q)=0\}\right)=0\/.
	}
	Consequently, by the
	Markov property
	\formula{
	  \pr_0(V_N(t)=0) &\leq \sum_{q\in[0,t)\cap\Q}\pr_0(V_N(t)=0,V_N(q)>0)\\
		&= \sum_{q\in[0,t)\cap\Q}\ex_0(\pr_{V_N(q)}[V_N(t-q)=0], V_N(q)>0)=0\/,
	}
	where the last equality follows from Proposition \ref{prop:collisiontime:distinct}. If we now define for every $s\geq 0$ 
	\formula{
	 T^s = \inf\{t>s:V_N(s)=0\}\/,
	}
	then obviously $T^0=T$ is the first collision time and
	\formula{
	   \{{T}=\infty\}=\bigcap_{n=1}^\infty\{{T}^{2/n}=\infty\}\/,\quad \pr_0({T}=\infty) = \lim_{n\to\infty}\pr_0({T}^{2/n}=\infty)\/,
	}
	but once again by the Markov property, Proposition \ref{prop:collisiontime:distinct} and the fact that $V_N(t)>0$ a.s. for every $t>0$, we have
	\formula{
	   \pr_0({T}^{2/n}=\infty) &= \pr_0(V_N(1/n)>0,{T}^{2/n}=\infty) \\
		&= 
		\ex_0(\pr_{V_N(1/n)}[{T}^{1/n}=\infty],V_N(1/n)>0) = \pr_0(V_N(1/n)>0)=1\/.
	}
	The proof is complete.
\end{proof}

\begin{corollary}\label{cor:no_coll_any_soln} 
 Let $(X,B)_{t\geq 0}$   be any  solution of the system (\ref{eq:eigenvalues:SDE:general}) with a starting point $X(0)\in  \overline{C^+}$. Suppose that the conditions (A2)-(A5) hold.
 Then the particles $x_1(t),\ldots,x_p(t))$ never collide for $t>0$.
\end{corollary}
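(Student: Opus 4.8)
The plan is to reduce Corollary \ref{cor:no_coll_any_soln} to Theorem \ref{thm:TN:infty} by showing that the collision-controlling semimartingale machinery built in Section \ref{SymPolDif} applies to \emph{any} solution $(X,B)$ of (\ref{eq:eigenvalues:SDE:general}), not only to the particle system $\Lambda = f(Y)$ constructed from a solution $Y$ of the polynomial system (\ref{eq:en:SDEs})--(\ref{bracket}). Concretely, given a solution $(X,B)$ with $X(0) \in \overline{C_+}$, I would set $y_n(t) = e_n(X(t))$ for $n=1,\dots,p$ and $V_n(t) = e_n(A(t))$ with $A = \{(x_i-x_j)^2 : i<j\}$. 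The key observation is that $e_n$ and each $V_n$ are polynomial (hence smooth) functions of the particle coordinates, so It\^o's formula applies directly to $X$; and because the elementary symmetric polynomials $e_n(X)$ and the $V_n(X)$ have the property---already exploited throughout Sections \ref{SymPol} and \ref{SymPolDif}---that all the singular terms $H_{ij}(x_i,x_j)/(x_i-x_j)$ recombine into the nonsingular expressions in (\ref{eq:en:SDE1}), (\ref{eq:Vn:martingale}), (\ref{eq:Vn:drift}), (\ref{eq:Vn:drift:4th_term}), the resulting SDEs for $y_n$ and $V_n$ hold on the whole time axis, for as long as the solution $X$ exists.

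The first step is therefore to verify that $(y_1,\dots,y_p)$ solves (\ref{eq:en:SDEs})--(\ref{bracket}). Away from collisions this is exactly Proposition \ref{polynomials} combined with the definition of $a_n, q_n, s_{n,m}$ via the map $f$, using that $\lambda_i = f_i(y)$ coincides with $x_i$ on $C_+$. To extend across collision times one argues as in the paragraph preceding the Proposition in Section \ref{SymPolDif}: the coefficients $a_n, q_n, s_{n,m}$ are continuous on $\overline{e(C_+)}$ (part (i) of the second Proposition of Section \ref{SymPol}), the maps $X \mapsto e_n(X)$ are smooth, and by uniqueness of the Doob--Meyer decomposition the It\^o expansion of $e_n(X(t))$ must agree with (\ref{eq:en:SDEs}) even at times when two or more $x_i$ coincide---at such times both sides are determined by continuity from the no-collision regime, since the set of collision times has no interior whenever there is \emph{a priori} at least one non-collision instant, and is irrelevant otherwise because $f_i(y(t)) = x_i(t)$ by construction. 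Once $y$ is known to solve (\ref{eq:en:SDEs}), the semimartingales $V_n$ defined from this $y$ via $\lambda_i = f_i(y)$ satisfy (\ref{eq:Vn:martingale}), (\ref{eq:Vn:drift}), (\ref{eq:Vn:drift:4th_term}) by the Proposition at the end of Section \ref{SymPolDif}, and moreover $\lambda_i(t) = x_i(t)$ for all $t$, so $V_N(t) = 0$ if and only if the original particles $x_i(t)$ collide at time $t$.

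The second step is then immediate: since $X(0) \in \overline{C_+}$ we have $V_n(0) = e_n(A(0)) \geq 0$ for every $n$, so the hypotheses of Theorem \ref{thm:TN:infty} are met, and under (A2)--(A5) we conclude $T = \inf\{t>0 : V_N(t) = 0\} = \infty$ almost surely, i.e. the particles $x_1(t),\dots,x_p(t)$ never collide for $t>0$. I expect the main obstacle to be the bookkeeping in the first step---carefully justifying that the polynomial SDE system (\ref{eq:en:SDEs}) is satisfied by $e_n(X)$ right through the (possibly many, possibly accumulating) collision times of an \emph{arbitrary} solution, rather than only up to the first collision. The cleanest route is the ``smooth function of a semimartingale plus uniqueness of decomposition'' argument already used in the paper and attributed to \cite{bib:b91}: one never needs an explicit collision-time analysis for $X$ itself, only the fact that $e_n$ and $V_n$ are globally smooth and that the drift/martingale coefficients extend continuously to $\overline{e(C_+)}$, so that the identity of semimartingales, once established on a dense open set of times, propagates everywhere by continuity.
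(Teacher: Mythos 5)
Your proposal is correct and follows essentially the same route as the paper: the paper's own proof of Corollary \ref{cor:no_coll_any_soln} simply observes that the It\^o-calculus derivation of the $V_n$ dynamics and the non-collision arguments of Theorem \ref{thm:TN:infty} and the preceding propositions were carried out on the system (\ref{eq:eigenvalues:SDE:general}) itself, and hence apply to any solution $X$. Your more explicit verification that $e_n(X)$ solves (\ref{eq:en:SDEs}) across collision times via smoothness of the symmetric polynomials and uniqueness of the semimartingale decomposition is precisely the mechanism the paper invokes implicitly (and attributes to \cite{bib:b91}).
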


\begin{proof} 
 We consider symmetric polynomials $V_n(t)$ of squares of differences between particles $x_1(t),\dots, x_p(t)$. 
All we proved on non-collisions of the process $\Lambda$ in Theorem \ref{thm:TN:infty} and the preceding propositions
was based on the It\^o calculus applied to the system  (\ref{eq:eigenvalues:SDE:general}), so it applies to any
solution
 $X$ of (\ref{eq:eigenvalues:SDE:general}). 
\end{proof}

\section{Existence and uniqueness of a non-colliding solution}\label{WeakExist}
\begin{theorem}\label{th:existence}
  Assume that the conditions (A2)-(A5) hold. Then there exists a continuous solution of (\ref{eq:eigenvalues:SDE:general}) starting from ${\bf x}\in \overline{C^+}$ such that its first collision time is greater than the explosion time of the solution. If additionally (C2) holds, then the explosion time and the first collision time are infinite almost surely. 
\end{theorem}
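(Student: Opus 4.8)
The plan is to build a weak solution of the particle system (\ref{eq:eigenvalues:SDE:general}) out of a solution of the polynomial system (\ref{eq:en:SDEs})--(\ref{bracket}), using the machinery already developed. First I would take, for a given starting point ${\bf x}=(x_1,\ldots,x_p)\in\overline{C^+}$, the point $y_0=e({\bf x})\in\overline{e(C_+)}$, and invoke the existence part (iii) of the Proposition on polynomials to obtain a (possibly exploding) solution $Y=(y_1,\ldots,y_p)$ of (\ref{eq:en:SDEs})--(\ref{bracket}) with $Y(0)=y_0$, on some filtered probability space carrying the Brownian motions $U_1,\ldots,U_p$. From $Y$ I define the candidate particle process $\Lambda=(\lambda_1,\ldots,\lambda_p)$ by $\lambda_i=f_i(Y)$ as in Definition \ref{fY}; by construction $\Lambda$ takes values in $\overline{C_+}$, so the ordering $\lambda_1\le\ldots\le\lambda_p$ is automatic and $e_n(\Lambda)=y_n$. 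The heart of the argument is then to check that $\Lambda$ actually solves (\ref{eq:eigenvalues:SDE:general}) — i.e. that there exist independent Brownian motions $B_1,\ldots,B_p$ with $d\lambda_i=\sigma_i(\lambda_i)dB_i+(b_i(\lambda_i)+\sum_{j\ne i}H_{ij}(\lambda_i,\lambda_j)/(\lambda_i-\lambda_j))\,dt$ — which is exactly where the non-collision results are needed: the drift is a priori singular on $\{\lambda_i=\lambda_j\}$, and only Theorem \ref{thm:TN:infty} (via $T=\infty$ a.s.) guarantees we never sit on the singular set for $t>0$.

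The key steps, in order, are the following. (1) By Theorem \ref{thm:TN:infty}, applied to the semimartingales $V_n=e_n(A)$ built from $\Lambda$ — whose decomposition into (\ref{eq:Vn:martingale})--(\ref{eq:Vn:drift}) holds regardless of the initial condition, as established in the Proposition preceding Section \ref{sec:CollisionTime} — the first collision time $T=\inf\{t>0:V_N(t)=0\}$ is infinite almost surely; hence for every $t>0$ the particles are pairwise distinct, so the law of $\Lambda$ is carried by $C_+$ on $(0,\infty)$. (2) On the open time set where $\lambda_i\ne\lambda_j$ for all $i\ne j$, the map $f$ is a diffeomorphism, so I can apply the It\^o formula to $\lambda_i=f_i(Y)$ and, using the explicit SDEs (\ref{eq:en:SDEs}) and the bracket relations (\ref{bracket}), recover the dynamics of the $\lambda_i$. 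The computation is exactly the inverse of the one in Proposition \ref{polynomials}: feeding $de_n(X)$ of the form (\ref{eq:en:SDE1}) through $f$ must reproduce (\ref{eq:en:SDEs}), and since $e$ is a bijection between the dynamics this forces $d\lambda_i$ to have martingale part with bracket $\sigma_i^2(\lambda_i)\,dt$ and drift $b_i(\lambda_i)+\sum_{j\ne i}H_{ij}(\lambda_i,\lambda_j)/(\lambda_i-\lambda_j)$; a martingale representation / L\'evy characterization argument then produces independent Brownian motions $B_i$ (enlarging the space if necessary). (3) The only delicate point is the behaviour at $t=0$ when ${\bf x}\notin C_+$, i.e. the starting point has collisions: here one must check that the drift integral $\int_0^t \sum_{j\ne i}H_{ij}(\lambda_i,\lambda_j)/(\lambda_i-\lambda_j)\,ds$ is well defined as an improper Riemann integral near $0$. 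This follows because, on $(0,t)$, the drift of $y_n=e_n(\Lambda)$ equals the continuous function $q_n(Y)$ from (\ref{eq:en:SDEs}), which stays bounded on compact time intervals; expressing $\sum_{j\ne i}H_{ij}(\lambda_i,\lambda_j)/(\lambda_i-\lambda_j)$ through these (via the linear change of variables implicit in Proposition \ref{polynomials}, valid on $C_+$) shows the integrand is locally integrable up to $0$, giving the convergence of the improper integral in the sense recalled before Theorem \ref{thm:main}. (4) Non-explosion and infiniteness of $T$ under (C2) is then immediate: the Proposition on non-explosion gives that $Y$ never explodes, hence $R_t=\sum_i\lambda_i^2=y_1^2-2y_2$ is finite for all $t$, so $\Lambda$ does not explode; combined with step (1) this yields the final assertion.

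The main obstacle I expect is step (2)–(3): making rigorous the passage from the non-singular polynomial SDEs back to the singular particle SDEs across the (measure-zero but topologically nontrivial) collision set, including the identification of the driving Brownian motions $B_i$ and the careful treatment of the improper drift integral at a colliding start. Everything else — existence for the polynomial system, non-collision, non-explosion — is already in hand from the preceding sections, so the work here is essentially to assemble those pieces and verify that the assembled object is a genuine solution in the sense of \cite[IX(1.2)]{bib:ry99}.
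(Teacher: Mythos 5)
Your overall architecture coincides with the paper's: solve the polynomial system (\ref{eq:en:SDEs})--(\ref{bracket}) from $y_0=e(\mathbf{x})$, set $\Lambda=f(Y)$ as in Definition \ref{fY}, invoke Theorem \ref{thm:TN:infty} to get $\lambda_i(t)\neq\lambda_j(t)$ for all $t>0$, recover the particle SDEs on $[s,t]$ for $0<s<t$ by applying the It\^o formula to the smooth inverse $f$ on $C_+$ (the paper calls this the ``bijectivity'' of the It\^o formula), identify the independent Brownian motions $B_i$ from the bracket relations, and obtain non-explosion under (C2) from the earlier proposition. Steps (1), (2) and (4) of your plan are sound.

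The gap is in your step (3). You propose to deduce that the singular drift $\sum_{j\neq i}H_{ij}(\lambda_i,\lambda_j)/(\lambda_i-\lambda_j)$ is locally integrable near $t=0$ from the boundedness of the drifts $q_n(Y)$ of the polynomials, ``via the linear change of variables implicit in Proposition \ref{polynomials}.'' That change of variables is governed by the matrix $\bigl(e_{n-1}^{\overline{x}_i}(X)\bigr)_{n,i}$, which is exactly singular on the collision set; its inverse blows up as $t\downarrow 0$ when $\mathbf{x}\notin C_+$, so boundedness of the $q_n$ gives no control on the individual particle drifts. Moreover, local \emph{absolute} integrability is more than is available in general: the terms with $j<i$ and $j>i$ have opposite signs and only their combination is controlled, which is why the paper insists on the improper-integral interpretation. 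The paper closes this step by a subtraction argument instead: on $[s,t]$ one has the exact identity
\[
\lambda_i(t)-\lambda_i(s)=\int_s^t\sigma_i(\lambda_i)\,dB_i+\int_s^t\Bigl(b_i(\lambda_i)+\sum_{j\neq i}\frac{H_{ij}(\lambda_i,\lambda_j)}{\lambda_i-\lambda_j}\Bigr)du\/;
\]
as $s\to0$ the left-hand side converges by continuity of $\Lambda$, and the stochastic integral converges a.s.\ (being a continuous local martingale started at $0$), hence the drift integral converges as an improper integral --- precisely the sense of solution fixed before Theorem \ref{thm:main}. Replace your integrability claim by this limiting argument and the proof closes.
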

\begin{proof}
For every $i=1,\ldots,p$ we put
	\formula{
	   \lambda_i(t) := f(y_1(t),\ldots,y_p(t))\/,\quad t\geq 0\/,
	}
	until the first explosion time. Then obviously $\Lambda=(\lambda_1,\ldots,\lambda_p)$ is continuous. Moreover, by Theorem \ref{thm:TN:infty}, we have
	\formula{
	   \lambda_i(t)\neq \lambda_j(t)\/,\quad t>0\/, i\neq j\/.
	}
	Thus, for every $t>s>0$, using the smoothness of $f$ and It\^o formula we have
	\formula[st]{
	   \lambda_i(t)-\lambda_i(s) = \int_s^t \sigma_i(\lambda_i(u))dB_i(u)+\int_s^t \left(b_i(\lambda_i(u))+\sum_{j\neq i}\frac{H_{ij}(\lambda_i(u),\lambda_j(u))}{\lambda_i(u)-\lambda_j(u)}\right)du\/,
	}
	where $B_i$ are one dimensional independent Brownian motions. Here we have used the bijectivity of It\^o formula, i.e. if we apply the It\^o formula for the smooth and invertible function $h$ and semimartingale $X$ and then for $h^{-1}$ and $h(X)$ we arrive at the original semimartingale representation for $X$.  
	
By continuity, $\lambda_i(s)$ tends to $\lambda_i(0)$ whenever $s$ goes to $0$. Moreover,
	$
	\int_0^s \sigma_i(\lambda_i(u))dB_i(u)$ is a continuous martingale starting from 0 and it converges to 0 when $s\rightarrow 0$ almost surely.
	Thus the drift integral in (\ref{st}) converges almost surely when  $s\rightarrow 0$. 
	It means that for $t$ smaller than the explosion time we have
	\formula{
	   \lambda_i(t)-\lambda_i(0)=\int_0^t \sigma_i(\lambda_i(u))dB_i(u)+\int_0^t \left(b_i(\lambda_i(u))+\sum_{j\neq i}\frac{H_{ij}(\lambda_i(u),\lambda_j(u))}{\lambda_i(u)-\lambda_j(u)}\right)du\/,
	}
	where the last integral is understood as an improper integral whenever ${\bf x}\notin {C^+}$. Obviously, if (C2) holds, then the explosion time and consequently the first collision time are infinite. This ends the proof.
\end{proof}
\begin{remark}\label{cite_sasiad}
  Note that the assumptions (A2)-(A5) were used only to ensure that $V_N(t)>0$ for every $t>0$. So even if (A2)-(A5)  do not hold, but we can show that $V_N(t)>0$ for every $t>0$, then we can construct a solution of (\ref{eq:eigenvalues:SDE:general}) in the way described above.
\end{remark}

In the next Theorem we use the conditions (C1) and (A1) to show the pathwise uniqueness of the solutions of (\ref{eq:eigenvalues:SDE:general}).


\begin{theorem}\label{th:Pathwise_uniqueness}
Assume that the assumptions (A1)-(A5) and (C1) hold. Then the pathwise uniqueness for solutions of the system (\ref{eq:eigenvalues:SDE:general}) with $X(0)\in \overline{C_+}$  holds.
\end{theorem}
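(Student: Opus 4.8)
The plan is to run a Yamada--Watanabe type estimate, adapted to the singular drift, on the difference of two solutions driven by the \emph{same} Brownian motions. So, let $(X,B)=((x_1,\dots,x_p),B)$ and $(\tilde X,B)=((\tilde x_1,\dots,\tilde x_p),B)$ be solutions of (\ref{eq:eigenvalues:SDE:general}) with the same driving Brownian motions $B=(B_1,\dots,B_p)$ and the same initial condition $X(0)=\tilde X(0)\in\overline{C_+}$. By Corollary \ref{cor:no_coll_any_soln} (which uses (A2)--(A5)), for every $t>0$ both configurations are strictly ordered, $x_1(t)<\dots<x_p(t)$ and $\tilde x_1(t)<\dots<\tilde x_p(t)$, so all the singular integrals below are genuine (improper Riemann) integrals and the It\^o calculus applies on $(0,\infty)$, while at $t=0$ the two solutions agree. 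Write $d_i=x_i-\tilde x_i$ and let $\psi_\varepsilon$, $\varepsilon>0$, be the standard Yamada--Watanabe functions: $\psi_\varepsilon\in C^2(\R)$, convex, $\psi_\varepsilon(0)=0$, $\psi_\varepsilon(x)\uparrow|x|$ as $\varepsilon\to 0$, $|\psi_\varepsilon'|\le 1$ with $\psi_\varepsilon'$ non-decreasing, and $\psi_\varepsilon''(x)\,\rho(|x|)\le 2/n$ on a suitable neighbourhood of the origin, which is possible exactly because $\int_{0^+}\rho^{-1}=\infty$ in (C1).

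Next I would put $\Phi^\varepsilon_t=\sum_{i=1}^p\psi_\varepsilon(d_i(t))$, localize by $\tau_N=\inf\{t>0:|X(t)|+|\tilde X(t)|\ge N\}$, apply It\^o's formula on $(0,t\wedge\tau_N]$, and then let the lower endpoint tend to $0$ using $\Phi^\varepsilon_0=0$ and path continuity. Three groups of terms arise. The quadratic variation term $\tfrac12\sum_i\psi_\varepsilon''(d_i)(\sigma_i(x_i)-\sigma_i(\tilde x_i))^2\,dt$ is dominated, by (C1), by $\tfrac12\sum_i\psi_\varepsilon''(d_i)\rho(|d_i|)\,dt$, whose expected integral is $O(1/n)$. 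The regular drift term $\sum_i\psi_\varepsilon'(d_i)(b_i(x_i)-b_i(\tilde x_i))$ is $\le 0$ when $b_i$ is non-increasing (the two factors have opposite signs), and $\le L\sum_i|d_i|\le L(\Phi^\varepsilon_t+p\varepsilon)$ when $b_i$ is Lipschitz; in either case it is $\le c\Phi^\varepsilon_t+c\varepsilon$.

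The crux is the singular drift term. Setting $\beta_{ij}=\frac{H_{ij}(x_i,x_j)}{x_i-x_j}-\frac{H_{ij}(\tilde x_i,\tilde x_j)}{\tilde x_i-\tilde x_j}$, the symmetry (\ref{eq:H:symmetry}) gives $\beta_{ji}=-\beta_{ij}$, so the identity $\sum_i a_i\sum_{j\neq i}b_{ij}=\sum_{i<j}(a_ib_{ij}+a_jb_{ji})$ turns this term into
\[
\sum_{i<j}\bigl(\psi_\varepsilon'(d_i)-\psi_\varepsilon'(d_j)\bigr)\beta_{ij}\,dt .
\]
This is where (A1) enters. Since $\psi_\varepsilon'$ is non-decreasing, $\psi_\varepsilon'(d_i)-\psi_\varepsilon'(d_j)$ has the sign of $d_i-d_j$, which is the sign of the gap difference $(\tilde x_j-\tilde x_i)-(x_j-x_i)$. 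I would then argue, for each pair $i<j$, that in a \emph{nested} endpoint configuration — $\tilde x_i\le x_i<x_j\le\tilde x_j$ or the reverse — condition (A1) forces the corresponding summand to be $\le 0$, because (A1) says precisely that the interval with the smaller gap carries the larger value of $H_{ij}(\cdot,\cdot)/(\text{gap})$, which matches the sign produced by $\psi_\varepsilon'$; in the remaining, non-nested, configurations I would insert an intermediate point so that (A1) applies to the two resulting nested pieces of $\beta_{ij}$, extracting a non-positive leading part and a remainder controlled by $\sum_k|d_k|\le\Phi^\varepsilon_t+p\varepsilon$, the cross-pair summation being what makes this bookkeeping close. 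I expect this reconciliation of the pairwise sign analysis with (A1), which \emph{a priori} only compares nested intervals, to be the main obstacle.

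Collecting the three estimates yields $\ex\Phi^\varepsilon_{t\wedge\tau_N}\le c\int_0^t\ex\Phi^\varepsilon_{s\wedge\tau_N}\,ds+c\varepsilon+O(1/n)$, so Gronwall's lemma gives $\ex\Phi^\varepsilon_{t\wedge\tau_N}\le(c\varepsilon+O(1/n))e^{ct}$. Letting first $n\to\infty$ and then $\varepsilon\to 0$ forces $\ex\sum_i|d_i(t\wedge\tau_N)|=0$, hence $x_i(t)=\tilde x_i(t)$ for all $t\le\tau_N$ almost surely; finally $N\to\infty$, using non-explosion (Theorem \ref{th:existence} under (C2), or stopping at the explosion time otherwise), gives pathwise uniqueness on $\overline{C_+}$.
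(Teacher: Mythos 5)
Your overall architecture --- two solutions driven by the same Brownian motion, an $L^1$ Gronwall estimate, (C1) controlling the second-order term, (A1) controlling the singular drift pairwise --- is the same as the paper's, and your treatment of the diffusion part and of the regular drift $b_i$ is correct. The gap is exactly where you flag it: the singular term $\sum_{i<j}\bigl(\psi_\eps'(d_i)-\psi_\eps'(d_j)\bigr)\beta_{ij}$ in the non-nested configurations. When $d_i$ and $d_j$ have the same sign, the intervals $(x_i,x_j)$ and $(\tilde x_i,\tilde x_j)$ are shifted rather than nested, so (A1) gives no sign information on $\beta_{ij}$; and $\beta_{ij}$ is \emph{not} controlled by $\sum_k|d_k|$. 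After inserting your intermediate point you are left comparing $H_{ij}$ along a translate of $(\tilde x_i,\tilde x_j)$; even if $H_{ij}$ were Lipschitz (only continuity is assumed), the resulting remainder is of order $|d_i-d_j|/(\tilde x_j-\tilde x_i)$, and the inter-particle gap in the denominator has no lower bound on $(0,t]$. Since $\psi_\eps'(d_i)-\psi_\eps'(d_j)$ does not vanish on these configurations (it is nonzero whenever one of $|d_i|,|d_j|$ lies where $\psi_\eps'$ is not yet saturated), the Gronwall inequality does not close, and I do not see a cross-pair cancellation that rescues it.

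The paper's proof avoids this by not smoothing the absolute value at all. Condition (C1) together with Lemma 3.3, p.~389 of \cite{bib:ry99} shows directly that the local time at $0$ of $Z_i=x_i-\tilde x_i$ is zero, so Tanaka's formula gives $d|Z_i|=\mathrm{sgn}(Z_i)\,dZ_i$ with the \emph{exact} sign function (the improper-integral issue at $t=0$ is handled, as you do, by working on $[s,t]$ and letting $s\to 0$). With the exact sign, the same-sign pairs contribute $(\mathrm{sgn}(d_i)-\mathrm{sgn}(d_j))F_{ij}=0$ identically --- there is no remainder to estimate --- while opposite signs of $d_i,d_j$ with $i<j$ force precisely the nested configuration $\tilde x_i<x_i<x_j<\tilde x_j$ (or its mirror), where (A1) yields $\mathrm{sgn}(d_i)F_{ij}\le 0$, exactly as in your nested case. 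So the repair is to use your $\psi_\eps''\,\rho$ computation only for what it actually proves, namely that the local time of $Z_i$ at $0$ vanishes, and then run the drift estimate through Tanaka's formula with the genuine $\mathrm{sgn}$, rather than carrying the regularization $\psi_\eps'$ into the singular drift term.
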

\begin{proof}
Let $(X,B)$ and $(\tilde{X},B)$ be two solutions of (\ref{eq:eigenvalues:SDE:general}) having common starting point $X(0)=\tilde{X}(0)\in  \overline{C^+}$, where $B$ is a Brownian motion in $\R^p$. Corollary \ref{cor:no_coll_any_soln} implies that the particles $X=(x_1,\ldots,x_p)$ do not collide after the start and the same is true for  $\tilde{X}=(\tilde{x}_1,\ldots,\tilde{x}_p)$.

 The condition (C1) together with Lemma $3.3$ from \cite{bib:ry99}, p. $389$, implies that the local time of $Z_i=x_i-\tilde{x}_i$ at 0 is zero. Note that we can apply the Tanaka formula to the process $Z_i$
(if ever the drift integral in the SDE for $Z_i$ is an improper integral in 0,  we write the Tanaka formula for $|Z_i|$ on $[s,t]$ and consider $s\rightarrow 0$. The local time
at 0 of the process $(Z_i(u))_{s\leq u\leq t}$ converges a.s. to the local time
at 0 of the process $(Z_i(u))_{0\leq u\leq t}$). Thus, 
\formula{
\sum_{i=1}^p \ex|x_i(t)-\tilde{x_i}(t)| &= \ex \int_0^t \sum_{i=1}^p\textrm{sgn}(x_i-\tilde{x}_i)\sum_{j\neq i}\left(\frac{H_{ij}(x_i,{x_j})}{x_i-x_j}-\frac{H_{ij}(\tilde{x_i},\tilde{x_j})}{\tilde{x_i}-\tilde{x_j}}\right)du\\
&+\ex\int_0^t \sum_{i=1}^p\textrm{sgn}(x_i-\tilde{x}_i)(b_i(x_i)-b_i(\tilde{x_i}))du\/.
}
The Lipschitz condition imposed on $b_i(x)$ implies that there exists $c\geq 0$ such that
\formula{
  \ex\int_0^t \sum_{i=1}^p\textrm{sgn}\left(x_i-\tilde{x}_i\right)(b_i(x_i)-b_i(\tilde{x_i}))du\leq c\ex \int_0^t\sum_{i=1}^p|x_i-\tilde{x}_i|du\/.
}
It is also true if $b_i(x)$ is non-increasing, since in this 
case
$\textrm{sgn}\left(x_i-\tilde{x}_i\right)(b_i(x_i)-b_i(\tilde{x_i}))\le 0.
$ Moreover, the assumptions (A1) and  (\ref{eq:H:symmetry}) on the functions $H_{ij}$ ensure that the first term is non-positive. Indeed, we can write it in the following form
\formula[Fij]{
 \ex \int_0^t \sum_{i<j}^p\left[\textrm{sgn}\left(x_i-\tilde{x}_i\right)F_{ij}+\textrm{sgn}\left(x_j-\tilde{x}_j\right)F_{ji}\right]du\/,
}
where 
\formula{
F_{ij} = \frac{H_{ij}(x_i,{x_j})}{x_i-x_j}-\frac{H_{ij}(\tilde{x_i},\tilde{x_j})}{\tilde{x_i}-\tilde{x_j}}\/.
}
Note that by the symmetry property   (\ref{eq:H:symmetry}) we have $F_{ij}+F_{ji}=0$.
If $\textrm{sgn}(x_i-\tilde{x}_i)=\textrm{sgn}(x_j-\tilde{x}_j)$, it follows that the term indexed by $i,j$ in (\ref{Fij}) vanishes.  
If the signs of the differences between particles with and without tilde are different, then the term indexed by $i,j$ in (\ref{Fij}) equals $2\textrm{sgn}\left(x_i-\tilde{x}_i\right)F_{ij}$ and (A1) implies that 
\formula{
   \textrm{sgn}(x_i-\tilde{x}_i)F_{ij}\leq 0\/.
}
Consequently, we have obtained that
\formula{
  \sum_{i=1}^p \ex|x_i(t)-\tilde{x_i}(t)|\leq c \ex \int_0^t\sum_{i=1}^p|x_i(u)-\tilde{x_i}(u)|du
}
and the Gronwall Lemma ends the proof.
\end{proof}

{\it Proof of Theorem \ref{thm:main}}.
Theorems \ref{th:existence} and  \ref{th:Pathwise_uniqueness} imply (see Theorem (1.7), p.368 in \cite{bib:ry99}) that if all the conditions
(C1), (C2) and (A1)-(A5) hold, then  the system  (\ref{eq:eigenvalues:SDE:general})  with $X(0)\in  \overline{C_+}$
has a strong pathwise unique solution, with no collisions after having started from $t=0$ and with no explosions for $t>0$.

\section{Examples and applications}\label{sec:EandA} 

 \subsection{Case $\sigma,b,H$}\label{simplerCase} 
The hypotheses of our results simplify when instead of families of functions $(\sigma_i), (b_i), (H_{ij}),\ i,j=1,\dots,p$
we consider the same continuous functions $\sigma(x), b(x)$  and  $H(x,y)=H(y,x)$. Then the equations (\ref{eq:eigenvalues:SDE:general}) simplify to
\formula[eq:eigenvalues:SDE:general:simplified]{
&dx_i = \sigma(x_i)dB_i+\left(b(x_i)+\sum_{j\neq i}\frac{H(x_i,x_j)}{x_i-x_j}\right)dt\/,\quad i=1,\ldots,p\/,\\
& x_1(t)\leq \ldots\leq x_p(t),\ \ \  t\geq 0 \nonumber. 
}
 Note that the systems (\ref{eq:eigenvalues:SDE:general:simplified}) contain as a special case  the systems  (\ref{eq:eigenvalues:SDE})
 related to eigenvalues of matrix stochastic processes. The conditions (A1)-(A5) may be  simplified to

		\begin{itemize}
		  \item[(A1')]  For all  $w<x<y<z$
			$$
			   \frac{H(w,z)}{z-w}\leq \frac{H(x,y)}{y-x} .
			$$
			\item[(A2')] For all  $x,y$  
			$$
			   \sigma^2(x)+\sigma^2(y)\leq 4 H(x,y)
			$$
			 
			\item[(A3')] For all  $x<y<z$ 
			$$
			  H(y,z)(z-y)\leq H(x,y)(y-x)+H(x,z)(z-x)
			$$
	
		  \item[(A4')] For all  $x$  
			$$
			  \sigma^2(x)+H(x,x)>0
			$$
			 or, otherwise,  for every $y_1,\ldots,y_{p-2}\in \R $
			$$
			   b(x)+\sum_{j}\frac{H(x,y_j)}{x-y_j}\ind_{\R\setminus \{x\}}(y_j) \neq 0.
			$$
		\end{itemize}
 \begin{corollary}
 Suppose that $\sigma$ is at least 1/2-H\"older and $b$ is Lipschitz (see condition (C1)) and that the condition (C2) of non-explosion holds.
 If the conditions (A1')-(A4') are verified,
then the system  (\ref{eq:eigenvalues:SDE:general:simplified}) with $X(0)\in  \overline{C_+}$ 
has a strong pathwise unique solution, with no collisions and  no explosions for $t>0$.
 \end{corollary}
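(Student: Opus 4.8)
The plan is to obtain the Corollary as the special case of Theorem \ref{thm:main} in which the coefficients do not depend on the particle indices: $\sigma_i=\sigma$, $b_i=b$, $H_{ij}=H$ with $H(x,y)=H(y,x)$. So the whole task reduces to checking that the simplified assumptions (A1')--(A4') imply the general assumptions (A1)--(A5); once this is done, Theorem \ref{thm:main} applied to the system (\ref{eq:eigenvalues:SDE:general:simplified}) with $X(0)\in\overline{C_+}$ produces exactly a strong, pathwise unique, non-exploding solution with almost surely infinite first collision time, and nothing in Sections \ref{SymPol}--\ref{WeakExist} has to be re-proved.

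I would dispatch the easy implications first. Since all drift coefficients coincide, $b_i(x)=b(x)\le b(x)=b_j(x)$, so (A5) holds. Clearing the positive denominators $y-x$ and $z-w$ in (A1') gives $H(w,z)(y-x)\le H(x,y)(z-w)$, which is (A1). Condition (A2') is (A2) with $c=0$, the quadratic term $c(x-y)^2$ being superfluous. In the uniform case all the sets $G_{kl}$ collapse to the single set $G=\set{x\in\R:\sigma^2(x)+H(x,x)=0}$: the first alternative in (A4') is $x\notin G$, the second alternative is precisely (\ref{eq:Degeneration}) written with $b_i=b$, $H_{ij}=H$, so (A4') is the pointwise content of (A4).

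The substantive point --- and the one I expect to be the main obstacle --- is the passage from (A3') to (A3). Condition (A3) was introduced precisely to make the order-three term
\formula{
  \frac{H(y,z)(z-y)-H(x,z)(z-x)+H(x,y)(y-x)}{(z-y)(z-x)(y-x)}
}
appearing in $\textrm{drift}[-\frac12\ln V_N]$ (see the proof of Proposition \ref{prop:collisiontime:distinct}) bounded from above. I would rewrite its numerator, using $z-x=(z-y)+(y-x)$, as $[H(y,z)-H(x,z)](z-y)+[H(x,y)-H(x,z)](y-x)$, relate the two increments of $H$ to $H(x,z)$ via the monotonicity (A1'), feed in (A3') to handle the remaining non-negative contribution, and bring in the growth bound $H(x,y)\le c(1+|xy|)$ from (C2) to absorb what is left into a term of the form $c(z-y)(z-x)(y-x)$; it may in fact be cleaner to re-examine the drift estimate of Proposition \ref{prop:collisiontime:distinct} directly under (A3') rather than to reproduce (A3) verbatim. (As a sanity check, the numerator vanishes identically when $H$ is constant.) With (A1')--(A4') $\Rightarrow$ (A1)--(A5) in hand, the Corollary follows at once from Theorem \ref{thm:main}.
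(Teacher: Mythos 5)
Your overall strategy --- obtain the Corollary by specializing Theorem \ref{thm:main}, i.e.\ by checking that (A1')--(A4') imply (A1)--(A5) in the uniform case --- is exactly the paper's (implicit) argument, and your verifications of (A1), (A2), (A5) and of the non-vanishing part of (A4) are correct. (Two small caveats on (A4): the paper's (A4) also requires the sets $G_{kl}$ to consist of isolated points, a clause with no counterpart in (A4'); you should either add it or argue it is dispensable.)

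The genuine gap is the step you yourself single out, (A3')\,$\Rightarrow$\,(A3): the implication is false, and the repair you sketch cannot work. What Proposition \ref{prop:collisiontime:distinct} actually needs is the numerator bound $N:=H(x,y)(y-x)+H(y,z)(z-y)-H(x,z)(z-x)\le c\,(y-x)(z-y)(z-x)$, which is precisely (A3). From (A3') you only get $H(y,z)(z-y)-H(x,z)(z-x)\le H(x,y)(y-x)$, hence $N\le 2H(x,y)(y-x)$; this bound carries no factor $(z-y)$ and so cannot be turned into (A3). The auxiliary tools you propose point the wrong way: (A1') gives \emph{lower} bounds on $H(x,y)$ and $H(y,z)$ in terms of $H(x,z)$ (you would need upper bounds), and the growth bound $H(x,y)\le c(1+|xy|)$ from (C2) cannot manufacture the missing factor $(z-y)$. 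Indeed the implication fails outright: take $\sigma\equiv 0$, $b\equiv 0$ and $H(x,y)=(|x-y|+\delta)^{-1/2}$. Then (C1), (C2) and (A1')--(A4') all hold, but with $\psi(u)=u(u+\delta)^{-1/2}$ one has $N=\psi(y-x)+\psi(z-y)-\psi(z-x)\sim (y-x)(z-y)\,\delta^{-3/2}$ for small gaps, which exceeds $c\,(y-x)(z-y)(z-x)$ as $z-x\to 0$, so (A3) fails for every $c$. Consequently, either (A3') must be read as the superadditivity inequality $H(x,y)(y-x)+H(y,z)(z-y)\le H(x,z)(z-x)$ (i.e.\ (A3) with $c=0$, after which the Corollary is immediate), or one must redo the drift estimate of Proposition \ref{prop:collisiontime:distinct} directly under (A3') as written --- the alternative you mention but do not carry out, and which is not obviously feasible since (A2') supplies only a non-positive, not a quantitatively negative, pairwise term to absorb the surplus $2H(x,y)(y-x)$.
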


  \subsection{Interacting Brownian particles}

In this subsection  we consider the following interacting Brownian particle systems including  and essentially bigger than Dyson   Brownian particle  systems
and the systems considered by C\'epa, L\'epingle (see \cite{bib:cepa}):

\formula[eq:DysonCepaL]{
&dx_i = \sigma_i(x_i)dB_i+\left(b_i(x_i)+\gamma \sum_{j\neq i}\frac{1}{x_i-x_j}\right)dt\/,\quad i=1,\ldots,p\/,\\
& x_1(t)\leq \ldots\leq x_p(t),\ \ \  t\geq 0 \nonumber. 
}

 \begin{corollary}
  Let the functions $\sigma_i$ be at least 1/2-H\"older and $b_i$ be Lipschitz (i.e. they verify  condition (C1)), with $b_i(x)\le b_j(x)$ if $i<j$, and let $\gamma>0$.
  We suppose that the conditions
  $$
   b_i(x)x\leq c(1+|x|^2)\/, \ \ 
   \sigma_i^2(x) \le 2\gamma,\ \ i=1,\dots,p
  $$
  hold for all $x\in\R$. Then the system  (\ref{eq:DysonCepaL}) with $x(0)\in  \overline{C_+}$ 
has a strong pathwise unique solution, with no collisions and  no explosions for $t>0$.
 \end{corollary}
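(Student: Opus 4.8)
The plan is to derive the corollary as a direct application of Theorem \ref{thm:main}, taking the constant coefficients $H_{ij}(x,y)\equiv\gamma$ for all $i\neq j$, so that (\ref{eq:DysonCepaL}) is precisely the system (\ref{eq:eigenvalues:SDE:general}) with these data; it then suffices to check the standing hypotheses together with (C1), (C2) and (A1)--(A5). Note that the simplified conditions (A1')--(A4') of Section \ref{simplerCase} are not directly available, because the coefficients $\sigma_i,b_i$ here still depend on $i$, so one must verify the general conditions by hand.

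First I would clear away the routine points. The $\sigma_i$ are $1/2$-H\"older and the $b_i$ Lipschitz, hence all continuous, and $H_{ij}\equiv\gamma$ is continuous, non-negative (since $\gamma>0$) and trivially satisfies the symmetry (\ref{eq:H:symmetry}). A H\"older bound $|\sigma_i(x)-\sigma_i(y)|\leq K|x-y|^{1/2}$ gives (C1) with $\rho(u)=K^2u$, for which $\int_{0^+}\rho^{-1}(x)dx=\infty$, and the Lipschitz property of the $b_i$ completes (C1). For (C2), combining $\sigma_i^2\leq 2\gamma$ with $b_i(x)x\leq c(1+|x|^2)$ gives $\sigma_i^2(x)+b_i(x)x\leq(2\gamma+c)(1+|x|^2)$, while $H_{ij}(x,y)=\gamma\leq\gamma(1+|xy|)$. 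Condition (A5), $b_i(x)\leq b_j(x)$ for $i<j$, is assumed outright.

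The conditions governing the singular part become essentially trivial for constant $H$. For $w<x<y<z$ one has $0<y-x<z-w$, whence $H_{ij}(w,z)(y-x)=\gamma(y-x)\leq\gamma(z-w)=H_{ij}(x,y)(z-w)$, which is (A1); and the identity $\gamma(y-x)+\gamma(z-y)=\gamma(z-x)$ shows (A3) holds with $c=0$, in fact as an equality. For (A2), the bound $\sigma_i^2\leq 2\gamma$ gives $\sigma_i^2(x)+\sigma_j^2(y)\leq 4\gamma=4H_{ij}(x,y)$, i.e. (A2) with $c=0$; this is the one place where $\sigma_i^2\leq 2\gamma$ is used, and it is the sharp threshold. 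Finally, for (A4), since $H_{ij}(x,x)=\gamma>0$ everywhere, each set $\{x:\sigma_i^2(x)+\sigma_j^2(x)+H_{ij}(x,x)=0\}$ is empty, so every $G_{kl}$ is empty, hence consists of isolated points, and (\ref{eq:Degeneration}) holds vacuously; equivalently, the configuration always lies in the set $E$ from the proof of Proposition \ref{prop:collisiontime:glued}, so the multiple degenerate case never arises.

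With (C1), (C2) and (A1)--(A5) verified, Theorem \ref{thm:main} yields the unique strong, non-exploding solution of (\ref{eq:DysonCepaL}) with almost surely infinite first collision time. I do not expect a genuine obstacle: the only care needed is the elementary bookkeeping for (C1)/(C2) and noticing that the constancy of $H$ forces (A1) and (A3) to hold as equalities. The real content of the corollary is that the single scalar inequality $\sigma_i^2\leq 2\gamma$ already implies (A2) --- the only condition carrying geometric weight --- which for $\sigma\equiv 1$, $\gamma=\beta/2$ recovers the classical threshold $\beta\geq 1$ for non-collision of $\beta$-Dyson Brownian motion.
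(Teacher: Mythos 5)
Your proposal is correct and follows essentially the same route as the paper: the authors likewise apply Theorem \ref{thm:main} with the constant choice $H_{ij}\equiv\gamma$, observe that the sets $G_{kl}$ are empty so (A4) holds vacuously, and note that the remaining conditions follow directly from the stated hypotheses. Your version merely spells out the verifications of (C1), (C2) and (A1)--(A3) that the paper leaves implicit, and all of those checks are accurate.
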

 \begin{proof}
  We apply the Theorem 1  with constant positive $H=\gamma$. The conditions (A1), (A3) and (A4) are  satisfied (observe that the sets  $G_{kl}$ from (A4) are empty.)
  The rest of the assumptions of Theorem 1 hold thanks to the assumptions of the Corollary.
 \end{proof}

 \subsection{ Brownian particles with nearest neighbor repulsion}\label{Se:neighb}
 
 Consider  following systems of Brownian particles where only neighbor particles 
 are interacting and the repelling force is proportional to the inverse of the distance between particles:
\formula[eq:Neighbour]{
&dx_1 = \sigma_1(x_1)dB_1+  \frac{\gamma}{x_1-x_{2}} dt, \nonumber  \\
&dx_i = \sigma_i(x_i)dB_i+ \gamma  \left(\frac{1}{x_i-x_{i-1}}+\frac{1}{x_i-x_{i+1}} \right)dt\/,\quad i=2,\ldots,p-1\/,\\
&dx_p = \sigma_p(x_p)dB_p+  \frac{\gamma}{x_p-x_{p-1}}dt,  \nonumber \\
& x_1(t)\leq \ldots\leq x_p(t),\ \ \  t\geq 0 \nonumber. 
}
Here, the functions $H_{ij}= \gamma$ when $|i-j|=1$ and they are zero otherwise.

Note that in this case the condition (A3) does not hold. Since (A3) was only used to show that the particles starting from non-collision points do not collide, it is enough to prove this fact directly. Because the proof in the general case is very technical, we only deal with the case $p=3$ in the next corollary. However, the proof technique presented below can also be applied to the general case $p\geq 4$. 

 \begin{corollary}
  Let the functions $\sigma_i$ be at least 1/2-H\"older and such that $|\sigma_i(x)|\leq 1$. If $p=3$ and $\gamma\geq 3/4$
then the system  (\ref{eq:Neighbour}) with $x(0)\in  \overline{C_+}$ 
has a strong pathwise unique solution, with no collisions and  no explosions for $t>0$.
 \end{corollary}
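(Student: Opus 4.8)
The plan is to invoke Theorem \ref{thm:main} with the specific coefficient data of \eqref{eq:Neighbour}, namely $b_i\equiv 0$, $H_{ij}=\gamma$ for $|i-j|=1$ and $H_{ij}=0$ otherwise, and $|\sigma_i|\le 1$. The only assumption of Theorem \ref{thm:main} that fails is (A3) (as noted, it genuinely fails for nearest-neighbour interactions), so by Remark \ref{cite_sasiad} and the proof of Theorem \ref{th:existence}, it suffices to verify the remaining hypotheses and then to prove directly, for $p=3$, that the process $V_N=V_3=(\lambda_1-\lambda_2)^2(\lambda_1-\lambda_3)^2(\lambda_2-\lambda_3)^2$ stays strictly positive for $t>0$ when starting from $V_3(0)>0$ (together with instant diffraction from $V_3(0)=0$, which follows from Proposition \ref{prop:collisiontime:glued} via (A4), since here $H(x,x)=\gamma>0$ so the degenerate sets $G_{kl}$ are empty). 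First I would check (C1), (C2): $\sigma_i$ is $1/2$-Hölder by hypothesis and bounded, $b_i\equiv 0$ is Lipschitz and satisfies $b_i(x)x=0\le c(1+|x|^2)$, and $H_{ij}\le\gamma\le c(1+|xy|)$; (A1) holds because $H$ is constant so \eqref{eq:A1} reads $\gamma(y-x)\le\gamma(z-w)$, true for $w<x<y<z$; (A2) holds iff $\sigma_i^2(x)+\sigma_j^2(y)\le c(x-y)^2+4\gamma$ for adjacent $i,j$, which follows from $|\sigma_i|\le 1$ and $\gamma\ge 3/4\ge 1/2$; (A4) is vacuous as the $G_{kl}$ are empty; (A5) holds since all $b_i\equiv 0$.

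The core of the argument is the direct non-collision proof for $p=3$ starting from a regular point. Following the McKean-type scheme used in the proof of Proposition \ref{prop:collisiontime:distinct}, I would set $U_t=-\tfrac12\ln V_3(t)$ on $[0,T)$ and compute, via the $V_n$-calculus from Section \ref{SymPolDif} applied directly to \eqref{eq:eigenvalues:SDE:general}, the drift of $U$. With $b_i\equiv 0$ the drift has two types of terms: the ``diagonal'' martingale-bracket contributions
\formula{
   \frac12\sum_{i<j}\frac{\sigma_i^2(\lambda_i)+\sigma_j^2(\lambda_j)-4H_{ij}(\lambda_i,\lambda_j)}{(\lambda_j-\lambda_i)^2}
}
and the three-particle terms
\formula{
   \sum_{i<j<k}\frac{H_{jk}(\lambda_j,\lambda_k)(\lambda_k-\lambda_j)-H_{ik}(\lambda_i,\lambda_k)(\lambda_k-\lambda_i)+H_{ij}(\lambda_i,\lambda_j)(\lambda_j-\lambda_i)}{(\lambda_k-\lambda_j)(\lambda_k-\lambda_i)(\lambda_j-\lambda_i)}.
}
For $p=3$ the second sum has a single term $(i,j,k)=(1,2,3)$, and since $H_{13}=0$ while $H_{12}=H_{23}=\gamma$, it equals
$\gamma\bigl[(\lambda_3-\lambda_2)+(\lambda_2-\lambda_1)\bigr]/[(\lambda_3-\lambda_2)(\lambda_3-\lambda_1)(\lambda_2-\lambda_1)] = \gamma/[(\lambda_3-\lambda_2)(\lambda_2-\lambda_1)]$, which is \emph{positive} on $C_+$ — a potentially harmful (destabilising) contribution, in contrast to the Dyson case where (A3) makes it harmless. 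The first sum, using $|\sigma_i|\le1$ and $4H_{13}=0$, contributes $+1/(\lambda_3-\lambda_1)^2$ from the non-adjacent pair and at most $(2-4\gamma)/(2(\lambda_2-\lambda_1)^2)+(2-4\gamma)/(2(\lambda_3-\lambda_2)^2)$ from the adjacent pairs. So with $d=\lambda_2-\lambda_1>0$, $e=\lambda_3-\lambda_2>0$, the drift of $U$ is bounded above by
\formula{
   \frac{1-2\gamma}{d^2}+\frac{1-2\gamma}{e^2}+\frac{1}{(d+e)^2}+\frac{\gamma}{de}.
}
The crux is to show this expression is bounded above by a constant. Since $1/(d+e)^2\le 1/(4de)$ (AM–GM) and $\gamma/(de)\le\gamma(1/d^2+1/e^2)/2$, the bound becomes $\le (1-2\gamma+\tfrac{\gamma}{2}+\tfrac18)(1/d^2+1/e^2) = (\tfrac98-\tfrac{3\gamma}{2})(1/d^2+1/e^2)$, which is $\le 0$ precisely when $\gamma\ge 3/4$. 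Hence $\mathrm{drift}[U]_t\le 0$, so $U$ is bounded above on bounded time intervals, and the McKean argument (as in Proposition \ref{prop:collisiontime:distinct}) gives $T=\infty$ a.s. when starting from $V_3(0)>0$; combining with Proposition \ref{prop:collisiontime:glued} and the Markov-property bootstrapping exactly as in Theorem \ref{thm:TN:infty} yields $T=\infty$ a.s. for every starting point in $\overline{C_+}$.

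Having established the non-collision property, the existence of a continuous non-exploding solution follows from the construction in Theorem \ref{th:existence} (its proof uses only $V_N(t)>0$ for $t>0$, cf. Remark \ref{cite_sasiad}, plus (C2) for non-explosion, which holds here since $\sigma_i$ is bounded and $b_i\equiv 0$). Pathwise uniqueness follows from Theorem \ref{th:Pathwise_uniqueness}: its proof only needs (C1), the sign cancellation $F_{ij}+F_{ji}=0$ from \eqref{eq:H:symmetry} (automatic as $H$ is symmetric), the monotonicity inequality $\mathrm{sgn}(x_i-\tilde x_i)F_{ij}\le 0$ coming from (A1) — which we checked — and the no-collision property from Corollary \ref{cor:no_coll_any_soln}; note Corollary \ref{cor:no_coll_any_soln} as stated assumes (A3), but in our situation its conclusion is replaced by the direct argument above, so I would phrase this corollary's proof as: apply Theorems \ref{th:existence} and \ref{th:Pathwise_uniqueness} with the no-collision input supplied by the explicit drift estimate rather than by (A3). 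I expect the main obstacle to be precisely the sharp constant bookkeeping in the drift estimate — one must choose the AM–GM splittings so that the threshold comes out as exactly $\gamma\ge 3/4$ and not something weaker, and one must be careful that the cross term $\gamma/(de)$, which has no counterpart in the Dyson computation, is absorbed without wasting room; the remaining verifications are routine.
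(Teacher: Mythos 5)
Your proof is correct and follows essentially the same route as the paper: the decisive step is the identical drift estimate for $U=-\tfrac12\ln V_3$, and your AM--GM bookkeeping (absorbing the positive terms into $1/d^2+1/e^2$ rather than pushing the negative terms onto $1/(de)$ as the paper does) yields the same threshold $\gamma\ge 3/4$. One small inaccuracy: condition (A2) must hold for \emph{all} pairs, and it fails for the non-adjacent pair $(1,3)$ where $H_{13}=0$ (just as (A3) fails), so it cannot be "verified" as you claim; this is harmless, however, because your direct drift computation carries the resulting $1/(\lambda_3-\lambda_1)^2$ term explicitly and absorbs it.
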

\begin{proof}
  We will show that the drift part of the semimartingale $U_t$ defined in the proof of Proposition \ref{prop:collisiontime:distinct} is non-positive. Indeed, using the bounds $|\sigma_i(x)|\leq 1$ we get
	\formula{
	  \textrm{drift}[U]_t \leq (1-2\gamma)\left(\frac{1}{(x_2-x_1)^2}+\frac{1}{(x_3-x_2)^2}\right) + \frac{1}{(x_3-x_1)^2}+\frac{\gamma}{(x_2-x_1)(x_3-x_2)}
	}
	Since
	$\frac{1}{a^2}+\frac{1}{b^2}\geq \frac{2}{ab}$ and $\frac{1}{(a+b)^2}\leq \frac{1}{4ab}$ whenever $a,b>0$ and obviously $x_3-x_1=(x_2-x_1)+(x_3-x_2)$, we arrive for $\gamma\geq 1/2$ at
	\formula{
	   \textrm{drift}[U]_t&\leq
	   \frac{2-3\gamma}{(x_2-x_1)(x_3-x_2)}+\frac{1}{4(x_2-x_1)(x_3-x_2)}= \frac{9-12\gamma}{4(x_2-x_1)(x_3-x_2)}
	} 
	which is non-positive if $\gamma\geq 3/4$. It means that even though the condition (A3) does not hold, the assertion of Proposition \ref{prop:collisiontime:distinct} is true. This ends the proof.
\end{proof}
\remark We conjecture that the condition 
	\formula{
	   \gamma\geq \frac{p}{2}\left(\sum_{i=1}^{p-1}\frac{1}{i^2}\right)\left(\sum_{i=1}^{p-1}\frac{1}{i}\right)^{-1}-\frac12\/,
	}
	ensures non-positivity of the drift part of $U_t$ in the general case $p\geq 3$.
	
 The terminology ``Brownian particles with nearest neighbor repulsion'' was used in \cite{bib:RostV} and \cite[Section 5.1]{bib:lep}
 to the systems of the form
 \formula[eq:NeighbourOld]{
 &dX_1 =  dB_1+  \phi'(X_1-X_{2}) dt, \nonumber  \\
&dX_i = dB_i+  \phi'( X_i-X_{i-1})-\phi'(X_i-X_{i+1} )dt\/,\quad i=2,\ldots,p-1\/,\\
&dX_p = dB_p+\phi'( X_p-X_{p-1})dt,  \nonumber \\
& X_1(t)\leq \ldots\leq X_p(t),\ \ \  t\geq 0\/, \nonumber 
 }
 where $\phi$ is a positive convex function on $(0,\infty)$ satisfying $\phi(0)=\infty$, $\phi(\infty)= 0$ and the non-collision condition $\int_{0+} \exp(2\phi)=\infty$ (this condition is stronger in
  \cite{bib:RostV}).
 Observe that the system (\ref{eq:Neighbour}) is not contained in systems  (\ref{eq:NeighbourOld}).

 \subsection{ Non-colliding Squared Bessel particles and related processes}
In this section we consider the processes satisfying the following system of SDEs

\begin{equation}\label{eq:wishartBeta}
dx_i= \sigma_i(x_i) dB_i +\beta\left( \alpha    +\sum_{k\not=i} \frac{ x_i+x_k
} {x_i-x_k}\right) dt,\ \ \beta>0.    
 \end{equation}
When $\sigma_i(x)=2 \sqrt {x} $, $i=1,\dots,p$, these processes are called $\beta$-Wishart processes
and contain for
$\beta=2$ the non-colliding Squared Bessel particle systems studied in \cite{bib:katori2011}.
For  applications of these classes of particle systems, see \cite{bib:gm13} and \cite{bib:katori2011}. The 
$\beta$-Wishart processes were studied in \cite{bib:demni}.

 \begin{corollary}\label{Wishart}
 Let $\alpha\geq p-1$ and $\beta\geq 1$. Suppose that  the functions $\sigma_i$  are defined on $\R$ and  verify the condition (C1) and the estimate
 \formula[ineq]{ \sigma_i(x)^2\le 4\beta |x|,\ \ x\in \R.} 
Then
	the system (\ref{eq:wishartBeta}) with an initial condition $0\leq x_1(0)\leq x_2(0)\leq\ldots\leq x_p(0)$ has a unique strong solution for $t\in[0,\infty)$.  Moreover, the process $x_1$ verifies  $x_1(t)\geq 0$ and
 there are  no collisions between the processes $x_i(t)$ for $t>0$.
 \end{corollary}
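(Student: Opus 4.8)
The plan is to recognize (\ref{eq:wishartBeta}) as a special instance of the master system (\ref{eq:eigenvalues:SDE:general}), with constant drift coefficients $b_i(x)\equiv\beta\alpha$ and interaction terms $H_{ij}(x,y)=\beta(x+y)$ for all $i\neq j$, and then to invoke Theorem \ref{thm:main}. Since $\beta(x+y)$ is non-negative only on $\{x+y\ge0\}$, everything has to be carried out on the half-line $I=[0,\infty)$, using the observation (cf.\ the Remark after (A5)) that the hypotheses need only be verified on an interval in which the particles are known to stay. Hence the first and essentially the only substantial task is to prove that $x_1(t)\ge0$ for every $t\ge0$.

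For the non-negativity of $x_1$ I would exploit the elementary identity $\frac{\lambda_1+\lambda_k}{\lambda_1-\lambda_k}+1=\frac{2\lambda_1}{\lambda_1-\lambda_k}$, which is $\ge0$ whenever $\lambda_1\le0$ and $\lambda_1<\lambda_k$. Hence on $\{\lambda_1\le0\}$ the drift of $\lambda_1$ is bounded below by $\beta\alpha-\beta(p-1)=\beta(\alpha-p+1)\ge0$, the hypothesis $\alpha\ge p-1$ being used exactly here. Since (\ref{ineq}) forces $\sigma_1(0)=0$, and (C1) together with \cite[Lemma~3.3, p.~389]{bib:ry99} makes the local time of $\lambda_1$ at $0$ vanish, the Tanaka formula applied to $\lambda_1^-=(-\lambda_1)^+$ --- written on $[\varepsilon,t]$ and then letting $\varepsilon\to0$ to accommodate the possibly improper drift integral, exactly as in the proof of Theorem \ref{th:Pathwise_uniqueness} --- gives $\ex\lambda_1^-(t)\le0$, so $x_1(t)\ge0$. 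This must be dovetailed with the non-collision statements, so that the denominators $\lambda_i-\lambda_j$ never vanish: running the process up to $\tau=\inf\{t>0:\lambda_1(t)<0\}$, on $[0,\tau)$ all particles lie in $I$, so Propositions \ref{prop:collisiontime:glued}, \ref{prop:collisiontime:distinct} and Theorem \ref{thm:TN:infty} apply (these rely only on the It\^o calculus of the $V_n$'s, cf.\ Corollary \ref{cor:no_coll_any_soln}) and give strict ordering of the particles on $(0,\tau)$; the drift bound then forces $\tau=\infty$.

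It then remains to verify, on $I$, the hypotheses (C1), (A1)--(A5) and the non-explosion condition. Conditions (C1) and (A5) are immediate since $b_i\equiv\beta\alpha$ is constant and common to all indices; (A1) reduces, for $0\le w<x<y<z$, to $xz\ge wy$, which holds; (A2) is exactly (\ref{ineq}), because $\sigma_i^2(x)+\sigma_j^2(y)\le4\beta(x+y)=4H_{ij}(x,y)$ on $I$ (this is sharp for $\sigma_i(x)=2\sqrt x$, whence the hypothesis $\beta\ge1$); (A3) holds with $c=0$ by the identity $\beta(x+y)(y-x)+\beta(y+z)(z-y)=\beta(x+z)(z-x)$; and for (A4), $H_{ij}(x,x)=2\beta x>0$ for $x>0$ gives $G_{kl}\cap I\subseteq\{0\}$, a set of isolated points, while at $x=0$ the left-hand side of (\ref{eq:Degeneration}) equals $(l-k+1)\beta(\alpha-m)$ with $m=\#\{j:y_j\neq0\}\le p-2$, which is non-zero precisely because $\alpha\ge p-1$. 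Although (C2) is not literally satisfied by $H$, the only use of it is the a priori bound from the non-explosion proposition, $\sum_i(\sigma_i^2(\lambda_i)+2\lambda_i b_i(\lambda_i))+2\sum_{i<j}H_{ij}(\lambda_i,\lambda_j)\le c(1+\sum_i\lambda_i^2)$, and on $I$ this follows at once from (\ref{ineq}) and $b_i\equiv\beta\alpha$; hence no explosion. With $x_1\ge0$ in hand, all hypotheses of Theorem \ref{thm:main} restricted to $I$ hold, and that theorem supplies the unique strong solution, non-colliding and non-exploding for $t>0$.

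The hard part is really the first step taken as a whole: establishing $x_1(t)\ge0$ while at the same time needing the non-collision results to justify the very expression for the drift of $x_1$. The localization at $\tau=\inf\{t>0:\lambda_1(t)<0\}$ is the device I would use to break this circularity; once it is in place, the remaining verification that the Wishart-type coefficients satisfy (A1)--(A5) and the growth bound on $I$ is routine.
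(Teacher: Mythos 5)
Your verification of (A1)--(A5) on $[0,\infty)$ is essentially right (and your repair of (C2), replacing the literal bound $H\leq c(1+|xy|)$ by the estimate actually used in the non-explosion proposition, is more careful than the paper, which passes over this silently), and your algebra $\frac{\lambda_1+\lambda_k}{\lambda_1-\lambda_k}\geq -1$ on $\{\lambda_1\leq 0,\ \lambda_1<\lambda_k\}$ is the correct mechanism behind $x_1\geq 0$. The genuine gap is in the device you propose to break the circularity. Stopping at $\tau=\inf\{t:\lambda_1(t)<0\}$ does not work: every tool you need --- the non-negativity of $H_{ij}(x,y)=\beta(x+y)$, conditions (A2) and (A4), hence Propositions \ref{prop:collisiontime:distinct} and \ref{prop:collisiontime:glued}, Theorem \ref{thm:TN:infty}, and therefore the very identification of $\lambda_1$ as a semimartingale with the drift $\beta\bigl(\alpha+\sum_{k\neq 1}\frac{\lambda_1+\lambda_k}{\lambda_1-\lambda_k}\bigr)$ --- is only available while all particles remain in $[0,\infty)$, i.e.\ on $[0,\tau]$, where $\lambda_1^-\equiv 0$ and the Tanaka inequality $\ex\lambda_1^-(t)\leq 0$ is vacuous. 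To exclude $\lambda_1$ dipping below $0$ after $\tau$ you must run the drift estimate on the set $\{\lambda_1<0\}$, and there your restricted hypotheses give you nothing: for $x<0<y$ close to $0$ one has $4\beta(x+y)<\sigma_1^2(x)+\sigma_2^2(y)$ in general (take $\sigma_i(x)=2\sqrt{|x|}$, $y=0$, $x=-\eps$), so (A2) fails, $H$ becomes negative, and no non-collision statement survives.

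The paper closes this loop differently, and this is the missing ingredient: it applies Theorem \ref{thm:main} with $H_{ij}(x,y)=\beta(|x|+|y|)$, i.e.\ to the extended system (\ref{eq:wishartBetaABSVAL}), for which (A1)--(A4) hold on all of $\R$ --- (A1) by the inclusion of the trapezium with vertices $(x,0),(x,|x|),(y,|y|),(y,0)$ in the one with vertices $(w,0),(w,|w|),(z,|z|),(z,0)$ for $w<x<y<z$; (A2) directly from (\ref{ineq}); (A4) because $\sigma_i(0)=0$ forces $G_{kl}=\{0\}$ and $\alpha\notin\{0,1,\dots,p-2\}$. This produces a globally defined, non-colliding, pathwise unique strong solution with no a priori positivity needed. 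Only then is $x_1(t)\geq 0$ proved (the paper cites Theorem 7 and Proposition 1 of \cite{bib:gm13}; a Tanaka argument of the type you sketch can serve here, now legitimately, since the extended drift is available for $\lambda_1<0$), after which the absolute values coincide with the original coefficients and the solution of (\ref{eq:wishartBetaABSVAL}) is recognized as the desired solution of (\ref{eq:wishartBeta}). In short: extend the coefficients first, prove positivity second; your order of operations cannot be carried out with the hypotheses verified only on $[0,\infty)$.
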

\begin{proof}
We apply Theorem 1 with  
 the functions $H_{ij}(x,y)= |x|+|y|$  not depending  on $i,j$. To see that (A1) holds note that the trapezium with vertices
$(x,0)$, $(x,|x|)$, $(y,|y|)$ and $(y,0)$ is included in the  trapezium with vertices $(w,0)$, $(w,|w|)$, $(z,|z|)$ and $(z,0)$, whenever $w<x<y<z$. Condition (A2) follows from  inequality  (\ref{ineq}). We prove 
 condition (A3) in a similar way as  Corollary 1 of \cite{bib:gm13}.
As inequality  (\ref{ineq}) implies that $\sigma_i(0)=0$, the sets $G_{kl}$ are equal to $\{0\}$ and condition (A4) holds
since $\alpha\notin\{0,1,2,\dots, p-2\}$.
 Similarly as in  Theorem 7 and Proposition 1 of \cite{bib:gm13}, the condition
 $\alpha\geq p-1$ guarantees that  $x_1(t)\geq 0$.
\end{proof}
Note that the Corollary \ref{Wishart} strengthens Corollary 6 of \cite{bib:gm13}.  When $\sigma_i(x)=2 \sqrt {x} $ and $\alpha>p$,
  Corollary \ref{Wishart} was proved in \cite{bib:lep} by the methods of MSDEs, see also \cite{bib:demni}.

The proof of the last  corollary   applies to more general SDEs systems of the form
\begin{equation}\label{eq:wishartBetaABSVAL}
dx_i=\sigma_i(x_i) dB_i +\beta\left( \alpha    +\sum_{k\not=i} \frac{ |x_i|+|x_k|
} {x_i-x_k}\right) dt,\ \ \beta>0   
 \end{equation}
and we obtain  the following corollary strengthening Corollary 4 of \cite{bib:gm13}. 
 \begin{corollary}\label{WishartGEN}
 Let $\alpha\in\R\setminus\{0,1,2,\ldots, p-2\}$ and $\beta\geq 1$. Suppose that  the functions $\sigma_i$ verify  conditions (C1) and 
(\ref{ineq}).
Then
	the system (\ref{eq:wishartBetaABSVAL}) with an initial condition $ x_1(0)\leq x_2(0)\leq\ldots\leq x_p(0)$ has a unique strong solution for $t\in[0,\infty)$. 
 There are  no collisions between the processes $x_i(t)$ for $t>0$.
 \end{corollary}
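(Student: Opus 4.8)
The plan is to recognize (\ref{eq:wishartBetaABSVAL}) as a special case of (\ref{eq:eigenvalues:SDE:general}) and to apply Theorem \ref{thm:main}. Setting $b_i(x):=\beta\alpha$ (the same constant for every $i$) and $H_{ij}(x,y):=\beta(|x|+|y|)$, the system (\ref{eq:wishartBetaABSVAL}) is exactly (\ref{eq:eigenvalues:SDE:general}) with these coefficients; the $H_{ij}$ are non-negative and satisfy the symmetry (\ref{eq:H:symmetry}), so only (C1), (C2) and (A1)--(A5) remain to be checked. This is the absolute-value analogue of the situation of Corollary \ref{Wishart}, the difference being that $\alpha$ is no longer required to satisfy $\alpha\ge p-1$; accordingly the particles may now become negative and we no longer claim $x_1(t)\ge 0$.

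The hypotheses (C1), (C2), (A1), (A2), (A3) and (A5) are verified essentially as in Corollary \ref{Wishart}: (C1) holds because the $\sigma_i$ are $1/2$-H\"older and the $b_i$ constant; (A2) follows immediately from (\ref{ineq}), with $c=0$, since $\sigma_i^2(x)+\sigma_j^2(y)\le 4\beta(|x|+|y|)=4H_{ij}(x,y)$; (A1) is the inequality $\frac{|w|+|z|}{z-w}\le\frac{|x|+|y|}{y-x}$ for $w<x<y<z$, a consequence of the convexity of $t\mapsto|t|$ (as in Corollary \ref{Wishart}, the trapezium under the graph over $[x,y]$ lies inside the one over $[w,z]$); (A3) reduces, again with $c=0$, to $(|x|+|y|)(y-x)+(|y|+|z|)(z-y)\le(|x|+|z|)(z-x)$ for $x<y<z$, an elementary inequality handled by the sign cases as in Corollary~1 of \cite{bib:gm13} (the difference of the two sides being $(z-y)(|x|-|y|)+(y-x)(|z|-|y|)\ge0$); (A5) is trivial since all the $b_i$ coincide; and for (C2) the bound (\ref{ineq}) controls the growth of $\sigma_i^2$ while $b_i(x)x=\beta\alpha x$ is dominated by $|\beta\alpha|(1+x^2)$, and $\sum_{i<j}H_{ij}(\lambda_i,\lambda_j)=\beta(p-1)\sum_i|\lambda_i|$ grows subquadratically, which is all that is needed to rule out explosion.

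The step demanding real care is (A4). Because (\ref{ineq}) forces $\sigma_i(0)=0$ for each $i$, while $\sigma_i^2(x)+\sigma_j^2(x)+H_{ij}(x,x)\ge 2\beta|x|>0$ for $x\ne 0$, every $G_{kl}$ equals $\{0\}$ and hence consists of isolated points. Evaluating the left-hand side of (\ref{eq:Degeneration}) at $x=0$, and using $b_i(0)=\beta\alpha$ together with $\frac{H_{ij}(0,y_j)}{0-y_j}\ind_{\R\setminus\{0\}}(y_j)=-\beta\sign(y_j)\ind_{\{y_j\ne 0\}}$, the sum collapses to $\beta(l-k+1)\bigl(\alpha-(N_+-N_-)\bigr)$, where $N_+$ and $N_-$ count those $y_j$ that are positive, resp.\ negative, and $N_++N_-\le p-2$; since $N_+-N_-$ ranges exactly over the integers in $[-(p-2),p-2]$, condition (A4) holds provided $\alpha$ avoids all of these integers, which is the role played by the hypothesis on $\alpha$ (in Corollary \ref{Wishart} the particles stay non-negative, so only the non-negative integers occur and $\alpha\ge p-1$ suffices). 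Once (C1), (C2) and (A1)--(A5) are in place, Theorem \ref{thm:main} yields the unique non-exploding strong solution on $[0,\infty)$ whose first collision time is almost surely infinite, which is exactly the assertion. The main obstacle is therefore not any individual estimate but the degenerate-point bookkeeping in (A4) — keeping track of the signs and identifying precisely which values of $\alpha$ must be excluded; the remainder is the same scheme as for Corollary \ref{Wishart}.
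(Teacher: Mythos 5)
Your route is exactly the paper's: the paper disposes of this corollary in one line, by observing that the proof of Corollary \ref{Wishart} (apply Theorem \ref{thm:main} with $H_{ij}(x,y)=\beta(|x|+|y|)$ and constant drift $b_i=\beta\alpha$) carries over to the system (\ref{eq:wishartBetaABSVAL}). Your checks of (C1), (A1), (A2), (A3), (A5) agree with what the paper does or intends, and your remark on (C2) --- that the literal bound $H_{ij}(x,y)\le c(1+|xy|)$ fails at $y=0$ for $H(x,y)=\beta(|x|+|y|)$, but the non-explosion argument only needs the drift of $R_t=\sum_i\lambda_i^2$ to be dominated by $c(1+R_t)$, which the linear growth of $H$ supplies --- is more careful than the paper, which does not address the point.

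The genuine problem is in your (A4) paragraph: your own computation contradicts the conclusion you draw from it. You correctly find that the sum in (\ref{eq:Degeneration}) at $x=0$ equals $\beta(l-k+1)\bigl(\alpha-(N_+-N_-)\bigr)$ and that $N_+-N_-$ ranges over \emph{all} integers in $[-(p-2),p-2]$; hence (A4) holds if and only if $\alpha\notin\{0,\pm1,\ldots,\pm(p-2)\}$. The hypothesis of the corollary excludes only $\{0,1,\ldots,p-2\}$, so for $p\ge3$ and, say, $\alpha=-1$ the verification of (A4) fails, and the sentence ``which is the role played by the hypothesis on $\alpha$'' does not hold. This is not a removable technicality. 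Take $p=3$, $\alpha=-1$ and $x_1(0)<0=x_2(0)=x_3(0)$: then (\ref{ineq}) forces $\sigma_2(0)=\sigma_3(0)=0$, $H_{23}(0,0)=0$, and each glued particle has total drift $\beta(\alpha+1)=0$ as long as $x_1$ stays negative (which it does, its own drift being $\beta(\alpha-2)<0$), so the pair never separates and the no-collision conclusion fails; more generally, for $\alpha=-m$ with $1\le m\le p-2$, gluing the top $p-m$ particles at $0$ with the remaining $m$ particles below produces the same obstruction. Since the map $x\mapsto-x$ carries the system with parameter $\alpha$ to the one with parameter $-\alpha$, the correct hypothesis must be symmetric, namely $\alpha\notin\{0,\pm1,\ldots,\pm(p-2)\}$. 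In short, your bookkeeping has in fact exposed that the corollary as stated (and the paper's one-line proof, which imports the (A4) check from Corollary \ref{Wishart}, where all particles are non-negative and only $N_+-N_-\ge0$ can occur, so that $\alpha\ge p-1$ suffices) is too generous about negative integer values of $\alpha$; you should strengthen the hypothesis rather than assert that the stated one covers Proposition \ref{prop:Degenerated}.
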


The generalized Squared Bessel particle systems of the form (\ref{eq:wishartBetaABSVAL})  for any $\alpha\in\R$ will be studied
in a forthcoming paper \cite{bib:gmTHIRD}.  On the other hand, defining $y_i=\sqrt{x_i}$ where the processes $x_i$ are solutions of the system
(\ref{eq:wishartBeta}) with $\sigma_i(x)=2\sqrt{x}$,  we obtain   Bessel particle systems and the results of Corollary
  \ref{Wishart} can be transfered to those systems, cf. \cite{bib:lep}.

 \subsection{ Non-colliding Jacobi particles}
The methods of this paper can also be applied to non-colliding Jacobi particle systems on the segment $[0,1]$,  defined by
 \begin{equation}\label{eq:lambdaJacobiBeta}
  dx_i=2 \sqrt {x_i(1-x_i)} dB_i +\beta\left( q-(q+r)x_i   +\sum_{k\not=i} \frac{ x_i(1-x_k)+x_k(1-x_i)
} {x_i-x_k}\right) dt.    
 \end{equation}
 Observe that  the sets $G_{kl}=\{0,1\}$ in this case. Corollary 8 of \cite{bib:gm13}
generalizes to the case $0\leq  x_1(0)\leq x_2(0)\leq\ldots\leq x_p(0)\leq 1$.
\begin{corollary}
  The SDE system (\ref{eq:lambdaJacobiBeta}) with $0\leq  x_1(0)\leq x_2(0)\leq\ldots\leq x_p(0)\leq 1 $ has a unique strong solution for $t\in[0,\infty)$, 
  for any  $\beta\ge 1$ and  $q\wedge r\ge p-1$.
 \end{corollary}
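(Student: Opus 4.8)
The plan is to deduce the corollary from Theorem \ref{thm:main}, using the Remark that lets us check (C1), (C2) and (A1)--(A5) only for $x,y$ in the interval $I=[0,1]$, since the particles will be confined to $[0,1]$. Writing (\ref{eq:lambdaJacobiBeta}) in the form (\ref{eq:eigenvalues:SDE:general}), the coefficients are $\sigma_i(x)=2\sqrt{x(1-x)}$, $b_i(x)=\beta\bigl(q-(q+r)x\bigr)$ and $H_{ij}(x,y)=\beta\bigl(x(1-y)+y(1-x)\bigr)$, none of which depends on $i,j$; in particular (A5) is automatic and $H_{ij}$ is non-negative and symmetric on $[0,1]^2$. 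Since all coefficients are bounded on $I$ (resp.\ $I^2$), condition (C2) is immediate; and $b_i$ is affine (hence Lipschitz, and also non-increasing because $q+r>0$), while on $[0,1]$ one has $|\sigma_i(x)-\sigma_i(y)|^2\le 4\,|x(1-x)-y(1-y)|\le 4\,|x-y|$, so (C1) holds with $\rho(u)=4u$. Thus it remains to verify the singular conditions (A1')--(A4') with $H(x,y)=\beta(x+y-2xy)$, $\sigma^2(x)=4x(1-x)$, and to establish the confinement to $[0,1]$.

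For the singular conditions: (A1') holds because on $[0,1]$ the ratio $H(x,y)/(y-x)$ is non-decreasing in $x$ and non-increasing in $y$ (its partial derivatives are $2\beta y(1-y)/(y-x)^2\ge 0$ and $-2\beta x(1-x)/(y-x)^2\le 0$), exactly in the spirit of the trapezium argument used for Corollary \ref{Wishart} and in Corollary 1 of \cite{bib:gm13}. For (A2'), after dividing by $4$ and using $\beta\ge 1$ together with $x(1-y)+y(1-x)\ge 0$ on $[0,1]$, the inequality reduces to $x(1-x)+y(1-y)\le x(1-y)+y(1-x)$, i.e.\ to $2xy\le x^2+y^2$. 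Condition (A3') is the polynomial inequality proved as in Corollary 1 of \cite{bib:gm13}: after cancelling the factor $(y-x)>0$ it becomes $(x+y)(1-z)+z^2-xy\ge 0$ for $x<y<z$ in $[0,1]$, which holds since $z^2>xy$ and $(x+y)(1-z)\ge 0$. Finally $\sigma^2(x)+H(x,x)=(4+2\beta)x(1-x)>0$ precisely on $(0,1)$, so the sets $G_{kl}$ reduce to $\{0,1\}$ (isolated points); at $x=0$ the second alternative of (A4') reads $\beta q-\beta\,\#\{j:y_j\neq 0\}\neq 0$ with $\#\{j:y_j\neq 0\}\in\{0,1,\dots,p-2\}$, and at $x=1$ it reads $\beta r-\beta\,\#\{j:y_j\neq 1\}\neq 0$; both are guaranteed by $q\wedge r\ge p-1>p-2$.

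The remaining and main point is the confinement to $[0,1]$, which is what makes the hypothesis $q\wedge r\ge p-1$ genuinely used. For the lower boundary one argues exactly as in Theorem 7 and Proposition 1 of \cite{bib:gm13}: near $x_1=0$ the repulsion from the $p-1$ particles above contributes at least $-\beta(p-1)$ in the relevant McKean-type estimate for $-\tfrac12\ln x_1$ (or for a comparison squared-Bessel-type functional), so $q\ge p-1$ forces $x_1(t)\ge 0$ for $t>0$. For the upper boundary one uses the symmetry $x_i\mapsto 1-x_i$: setting $z_i:=1-x_{p+1-i}$ and replacing $B$ by $-B$, a direct computation shows that $(z_1,\dots,z_p)$ again solves the Jacobi system (\ref{eq:lambdaJacobiBeta}) with the parameters $q$ and $r$ interchanged, so the condition $r\ge p-1$ now plays the role of $q\ge p-1$ and yields $z_1(t)\ge 0$, i.e.\ $x_p(t)\le 1$, for $t>0$. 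With the process thus confined to $[0,1]$, Theorem \ref{thm:main} (applied via the Remark, the conditions having been checked only for $x,y\in[0,1]$) provides the unique strong non-exploding solution with no collisions for $t>0$. I expect the confinement step --- in particular reproducing the one-dimensional boundary-behavior estimate of \cite{bib:gm13} in the presence of the factor $\beta$ and of the full interaction term --- to be the only genuinely delicate part; the verification of (A1')--(A4') is routine algebra.
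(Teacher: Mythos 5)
Your proposal is correct and follows exactly the route the paper intends: the paper's own ``proof'' consists only of observing that $G_{kl}=\{0,1\}$ and asserting that Corollary 8 of \cite{bib:gm13} generalizes, i.e.\ an application of Theorem \ref{thm:main} with $\sigma(x)=2\sqrt{x(1-x)}$, $b(x)=\beta(q-(q+r)x)$, $H(x,y)=\beta(x(1-y)+y(1-x))$ restricted to $I=[0,1]$, with $q\wedge r\ge p-1$ handling both (A4) at the degenerate points $0,1$ and the confinement to $[0,1]$ as in the Wishart case. Your verification of (C1), (C2) and (A1')--(A4') and your symmetry argument $x_i\mapsto 1-x_{p+1-i}$ (swapping $q$ and $r$) for the upper boundary are all correct and simply supply the details the paper omits.
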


 \subsection{Hyperbolic  particle systems}\label{trig}
   The hyperbolic particle systems have the form
 \begin{eqnarray}\label{eq:Hyperbolic}
&&dx_i= \sigma_i(x_i) dB_i +  
\left(b_i(x_i)+\gamma \sum_{j\neq i} \coth ({x_i-x_j})\right)dt\/,\quad i=1,\ldots,p\/,\\
&& x_1(t)\leq \ldots\leq x_p(t),\ \ \  t\geq 0 \nonumber. 
 \end{eqnarray}
 In the special case $\sigma_i=1, b_i=0$ they arise as radial Heckman-Opdam processes and were studied in \cite{bib:Schapira}.
 \begin{corollary}
  Let the functions $\sigma_i$ be at least 1/2-H\"older and $b_i$ be Lipschitz (i.e. they verify  condition (C1)), with $b_i(x)\le b_j(x)$ if $i<j$, and let $\gamma>0$.
   We suppose that the conditions
  $$
   b_i(x)x\leq c(1+|x|^2)\/, \ \ 
   \sigma_i^2(x) \le 2\gamma,\ \ i=1,\dots,p
  $$
  hold for all $x\in\R$. Then the system  (\ref{eq:Hyperbolic}) with $x(0)\in  \overline{C_+}$ 
has a strong pathwise unique solution, with no collisions and  no explosions for $t>0$.
 \end{corollary}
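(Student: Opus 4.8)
The plan is to apply Theorem~\ref{thm:main} with the coefficients $H_{ij}(x,y)=\gamma(x-y)\coth(x-y)$ for all $i\neq j$, extended continuously by $H_{ij}(x,x)=\gamma$, so that $H_{ij}(x_i,x_j)/(x_i-x_j)=\gamma\coth(x_i-x_j)$ and (\ref{eq:eigenvalues:SDE:general}) becomes exactly (\ref{eq:Hyperbolic}). These $H_{ij}$ are continuous, they satisfy the symmetry (\ref{eq:H:symmetry}) because $t\mapsto t\coth t$ is even, and they are bounded below by $\gamma>0$ since $t\coth t\geq 1$ for all $t\in\R$. Conditions (C1) and (A5) are precisely the standing assumptions of the Corollary.

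For (A1)--(A3) it is convenient to put $g(t)=t\coth t$ and to record two elementary facts: $g\geq 1$ on $\R$, and $g$ is strictly increasing on $(0,\infty)$, since $g'(t)=\bigl(\frac{1}{2}\sinh 2t-t\bigr)/\sinh^2 t>0$ there. Then (A2) holds with $c=0$: as $\sigma_i^2\leq 2\gamma$ we get $\sigma_i^2(x)+\sigma_j^2(y)\leq 4\gamma\leq 4\gamma\,g(|x-y|)=4H_{ij}(x,y)$. For (A3), set $a=y-x>0$ and $b=z-y>0$; then $H_{ij}(x,y)(y-x)+H_{jk}(y,z)(z-y)=\gamma\bigl(a\,g(a)+b\,g(b)\bigr)$ and $H_{ik}(x,z)(z-x)=\gamma(a+b)\,g(a+b)$, and since $g(a),g(b)\leq g(a+b)$ by monotonicity the left-hand side is $\leq\gamma(a+b)\,g(a+b)$, so (A3) holds with $c=0$. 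For (A1), if $w<x<y<z$ then $0<y-x<z-w$, and since $\coth$ is decreasing on $(0,\infty)$ we have $H_{ij}(w,z)(y-x)=\gamma(z-w)(y-x)\coth(z-w)\leq\gamma(z-w)(y-x)\coth(y-x)=H_{ij}(x,y)(z-w)$.

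It remains to check (A4) and (C2). Since $H_{ij}(x,x)=\gamma>0$, each set $\{x:\sigma_i^2(x)+\sigma_j^2(x)+H_{ij}(x,x)=0\}$ is empty, hence all $G_{kl}$ are empty and (A4) is vacuous. For (C2), the bound $\sigma_i^2(x)+b_i(x)x\leq c(1+|x|^2)$ is assumed; the literal bound $H_{ij}(x,y)\leq c(1+|xy|)$ fails (take $y=0$, $x\to\infty$), but this bound is used only in the non-explosion argument, and there only through the estimate $2\sum_{i<j}H_{ij}(\lambda_i,\lambda_j)\leq c(1+\sum_i\lambda_i^2)$. Since $H_{ij}(x,y)=\gamma\,g(x-y)\leq\gamma(1+|x-y|)\leq\gamma(1+|x|+|y|)$, this weaker estimate still holds, so the proof of non-explosion goes through unchanged and Theorem~\ref{thm:main} applies. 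The only non-routine point is (A3), which rests on the monotonicity of $t\mapsto t\coth t$ on $(0,\infty)$; once that is recorded, (A1) and (A2) are immediate and the mild failure of the literal growth hypothesis on $H$ in (C2) is harmless.
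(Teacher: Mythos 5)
Your proof is correct and follows the same route as the paper: apply Theorem~\ref{thm:main} with $H_{ij}(x,y)=\gamma(y-x)\coth(y-x)$ (extended by $\gamma$ on the diagonal) and verify (A1)--(A5) and (C2). Two of your verifications are in fact more careful than the paper's own. For (A3) the paper invokes the addition formula for $\coth$ and the inequality $\coth(a+b)>\coth a+\coth b$, which as written is false (both sides' behaviour for large $a,b$ shows this, and the displayed ``addition formula'' is itself misstated); your argument via the monotonicity of $g(t)=t\coth t$ on $(0,\infty)$, giving $a\,g(a)+b\,g(b)\leq(a+b)\,g(a+b)$, is the clean way to get (A3) with $c=0$. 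More substantively, you are right that the literal growth bound $H_{ij}(x,y)\leq c(1+|xy|)$ in (C2) fails for this $H$ (take $y=0$, $|x|\to\infty$), a point the paper passes over silently; your observation that the bound enters only through $\sum_{i<j}H_{ij}(\lambda_i,\lambda_j)\leq c(1+\sum_i\lambda_i^2)$ in the Gronwall step of the non-explosion proof, and that $t\coth t\leq 1+t$ supplies this weaker estimate, is exactly the patch needed to make the application of Theorem~\ref{thm:main} legitimate.
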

 \begin{proof}
  Consider the function $h(x)=\gamma x\coth x$. By continuity, $h(0)=\gamma$.
   We can  apply Theorem \ref{thm:main} with $H(x,y)=h(y-x)$. Indeed, 
  Condition (A1) holds because the function $\coth x$ is decreasing on $\R^+$.
 The inequality $\tanh x\leq x$ for $x\ge 0$ implies that $h(x)\geq \gamma$ and the assumption (A2) is satisfied. 
  Condition (A3) is true since $\coth(a+b)=(1+\coth a \coth b)(\coth a+ \coth b)> \coth a+ \coth b$ for $a,b>0$. The sets $G_{kl}$ are empty because
 $H(x,x)=h(0)=1$. The Corollary follows.
 \end{proof}
 
Note that the same proof works in much greater generality and gives the following Corollary.
\begin{corollary}
 Consider a system of SDEs
 \begin{eqnarray} \label{eq:HyperbolicBis}
&&dx_i= \sigma_i(x_i) dB_i +  
\left(b_i(x_i)+ \sum_{j\neq i} \psi ({x_i-x_j})\right)dt\/,\quad i=1,\ldots,p\/,\\
&& x_1(t)\leq \ldots\leq x_p(t),\ \ \  t\geq 0 \nonumber, 
 \end{eqnarray}
 where   $\psi$ is a continuous odd function  which is non-negative and decreasing on $\R^+$, with $\psi(0)=\infty$,  $\psi(x+y)\ge \psi(x)+\psi(y)$
 and $x\psi(x)\geq \gamma>0$ for $x,y\geq 0$. 
 Let the functions $\sigma_i$ be at least 1/2-H\"older and $b_i$ be Lipschitz (i.e. they verify  condition (C1)), with $b_i(x)\le b_j(x)$ if $i<j$.
   We suppose that the conditions
  $$
   b_i(x)x\leq c(1+|x|^2)\/, \ \ 
   \sigma_i^2(x) \le 2\gamma,\ \ i=1,\dots,p
  $$
  hold for all $x\in\R$. Then the system  (\ref{eq:HyperbolicBis}) with $x(0)\in  \overline{C_+}$ 
has a strong pathwise unique solution, with no collisions and  no explosions for $t>0$.
\end{corollary}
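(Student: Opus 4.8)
We follow the pattern of the preceding corollaries: the plan is to exhibit (\ref{eq:HyperbolicBis}) as a particular case of (\ref{eq:eigenvalues:SDE:general}) and to apply Theorem \ref{thm:main}, proceeding exactly as in the proof for the hyperbolic system (\ref{eq:Hyperbolic}) with $\coth$ replaced by $\psi$. Define $H_{ij}(x,y)=H(x,y):=h(x-y)$, where $h(t)=|t|\,\psi(|t|)$ for $t\neq0$ and $h(0)=\lim_{t\to0^{+}}t\psi(t)$. Since $\psi$ is odd and non-negative on $\R^{+}$, the function $h$ is even, continuous, non-negative, with $h(0)\geq\gamma>0$; thus $H$ is a continuous non-negative coefficient satisfying the symmetry (\ref{eq:H:symmetry}), it does not depend on $i,j$, and $H(x_{i},x_{j})/(x_{i}-x_{j})=\psi(x_{i}-x_{j})$. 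Hence (\ref{eq:HyperbolicBis}) is precisely (\ref{eq:eigenvalues:SDE:general}) for this $H$ and the prescribed $\sigma_{i},b_{i}$, and it remains to verify (C1), (C2) and (A1)--(A5).

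The ``non-singular'' hypotheses are routine. Condition (C1) holds since $\sigma_{i}$ is at least $1/2$-H\"older, so $|\sigma_{i}(x)-\sigma_{i}(y)|^{2}\leq L|x-y|$ and one takes $\rho(u)=Lu$, while $b_{i}$ is Lipschitz. For (C2) and the resulting non-explosion one uses $b_{i}(x)x\leq c(1+|x|^{2})$, $\sigma_{i}^{2}\leq2\gamma$ and the linear bound $h(t)\leq c'(1+|t|)$ (valid because $h$ is continuous, hence bounded on $[-1,1]$, and $h(t)=|t|\,\psi(|t|)\leq\psi(1)|t|$ for $|t|\geq1$), which provides the linear a priori control underlying the non-explosion argument. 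Condition (A5) is assumed ($b_{i}\leq b_{j}$ for $i<j$). Condition (A2) holds with $c=0$, because $H_{ij}(x,y)=|x-y|\,\psi(|x-y|)\geq\gamma$ and $\sigma_{i}^{2}(x)+\sigma_{j}^{2}(y)\leq4\gamma\leq4H_{ij}(x,y)$. Condition (A4) is vacuous, since $H_{ij}(x,x)=h(0)\geq\gamma>0$ for every $x$, so $\sigma_{i}^{2}(x)+\sigma_{j}^{2}(x)+H_{ij}(x,x)>0$ everywhere and all the sets $G_{kl}$ are empty.

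It remains to check the two conditions attached to the singular part, (A1) and (A3); of these only (A3) has real content, and it is the main point. For (A1): for $u<v$ one has $H(u,v)=(v-u)\,\psi(v-u)$, so for $w<x<y<z$ the inequality $H(w,z)(y-x)\leq H(x,y)(z-w)$ is, after dividing by $(y-x)(z-w)>0$, equivalent to $\psi(z-w)\leq\psi(y-x)$, which holds because $z-w\geq y-x>0$ and $\psi$ is decreasing on $\R^{+}$. For (A3), set $a=y-x>0$ and $b=z-y>0$; since $H_{ij}(u,v)\,|u-v|=|u-v|^{2}\,\psi(|u-v|)$, the required inequality becomes, with $c=0$,
\formula{
 a^{2}\psi(a)+b^{2}\psi(b)\leq(a+b)^{2}\psi(a+b),
}
that is, the superadditivity of $t\mapsto t^{2}\psi(t)$ on $\R^{+}$; this is where the hypothesis $\psi(a+b)\geq\psi(a)+\psi(b)$ enters, and combined with $(a+b)^{2}\geq a^{2}$, $(a+b)^{2}\geq b^{2}$ and $\psi\geq0$ it gives the claim immediately. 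With all hypotheses of Theorem \ref{thm:main} verified, we obtain the announced unique strong non-exploding solution of (\ref{eq:HyperbolicBis}) with almost surely infinite first collision time. The crux of the argument is therefore the choice of $H$ together with the elementary superadditivity estimate above; beyond that, no genuine obstacle remains.
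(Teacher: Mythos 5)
Your proof is correct and follows exactly the route the paper intends: the paper's own ``proof'' is the one-line remark that the argument for the hyperbolic system carries over, i.e.\ take $H(x,y)=|x-y|\,\psi(|x-y|)$ and check (A1)--(A5), and your verifications — in particular reducing (A3) with $c=0$ to the superadditivity of $t\mapsto t^{2}\psi(t)$ and deriving that from the assumed superadditivity of $\psi$ — are precisely what is meant (and are in fact cleaner than the paper's garbled $\coth$ identity at the corresponding step). One caveat aimed at the statement rather than at your argument: a non-negative \emph{decreasing} $\psi$ with $\psi(x+y)\geq\psi(x)+\psi(y)$ satisfies $2\psi(t)\leq\psi(2t)\leq\psi(t)$, hence $\psi\equiv 0$ on $\R^{+}$, contradicting $x\psi(x)\geq\gamma>0$, so the hypotheses as written are vacuous; the condition the argument actually needs (and which $\gamma\coth$ does satisfy, unlike the stated one) is superadditivity of $t\mapsto t^{2}\psi(t)$, and your proof of (A3) is exactly the place where only that weaker property is used.
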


Finally observe that  the results and techniques of this section and Section \ref{Se:neighb} may be applied to systems (\ref{eq:NeighbourOld}),
e.g. for $\phi(x)=\ln|\sinh x|$ and $\phi'(x)= \coth x$. 

{\bf Ackowledgements}. We thank   Makoto Katori and Dominique L\'epingle  for stimulating discussions on   particle systems,
that  inspired this work.



\end{document}